\newcommand{\dt}{{\Delta t}}
\newcommand{\dT}{{\Delta T}}%{H}%
\newcommand{\tx}{t}%{x}
\newcommand{\Tx}{T}%{X}
\newcommand{\GLMM}{\mathbb{M}}
\newcommand{\GLMF}{\mathbb{F}}
\newcommand{\GLMS}{\mathbb{S}}
\def\GLyy{{\rm GLy\tilde{y}}}
\def\GLye{{\rm GLy}\varepsilon}
\def\GLMMyy{{\rm \mathbb{M}_{y\tilde{y}}}}
\def\GLMMye{{\rm \mathbb{M}_{y\varepsilon}}}
\newcommand\ST{\rule[-0.75em]{0pt}{2em}}
\newcommand{\ones}{\mathbbm{1}}
\def\RKA{{\mathcal A}}
\def\RKb{{\mathcal B}}
\def\RKc{{\mathcal C}}
\def\DRKB{B^*} 
\def\DRKD{D^*} 
\def\ODRKB{\overline{B}^*} 
\def\ODRKD{\overline{D}^*} 
\def\GLMA{\mathbf{A}}
\def\GLMU{\mathbf{U}}
\def\GLMB{\mathbf{B}}
\def\GLMV{\mathbf{V}}
\newcommand{\TreeIndexSet}{{\mathcal T}}
\newcommand{\TreeSet}{{T}}
\newcommand{\Ttree}{{\tau}}
\newcommand{\LabeledTreeSet}{{LT}}
\newcommand{\TreeOrder}{{\rho}}
\def\BTreeIA{
\begin{pspicture}(0,0)(12pt,12pt)
\psdot(5pt,6pt)
\end{pspicture} 
}
\def\BTreeIIA{
\begin{pspicture}(0,0)(12pt,12pt)
\psdot(5pt,0pt)
\psdot(5pt,6pt)
\psline(5pt,0)(5pt,6pt)
\end{pspicture} 
}
\def\BTreeIIIA{
\begin{pspicture}(0,0)(12pt,12pt)
\psdot(5pt,0pt)
\psdot(0pt,6pt)
\psdot(10pt,6pt)
\psline(5pt,0)(0pt,6pt)
\psline(5pt,0)(10pt,6pt)
\end{pspicture} 
}
\def\BTreeIVA{
\begin{pspicture}(0,0)(12pt,14pt)
\psdot(5pt,0pt)
\psdot(5pt,6pt)
\psdot(5pt,12pt)
\psline(5pt,0)(5pt,6pt)
\psline(5pt,6pt)(5pt,12pt)
\end{pspicture} 
}
\def\BTreeVA{
\begin{pspicture}(0,0)(12pt,18pt)
\psdot(5pt,0pt)
\psdot(5pt,6pt)
\psdot(0pt,6pt)
\psdot(10pt,6pt)
\psline(5pt,0pt)(5pt,6pt)
\psline(5pt,0pt)(0pt,6pt)
\psline(5pt,0pt)(10pt,6pt)
\end{pspicture} 
}
\def\BTreeVIA{
\begin{pspicture}(0,0)(12pt,18pt)
\psdot(5pt,0pt)
\psdot(5pt,12pt)
\psdot(0pt,6pt)
\psdot(10pt,6pt)
\psline(5pt,0)(10pt,6pt)
\psline(10pt,6pt)(5pt,12pt)
\psline(5pt,0pt)(0pt,6pt)
\end{pspicture} 
}
\def\BTreeVIIA{
\begin{pspicture}(0,0)(12pt,18pt)
\psdot(5pt,0pt)
\psdot(5pt,6pt)
\psdot(0pt,12pt)
\psdot(10pt,12pt)
\psline(5pt,0)(5pt,6pt)
\psline(5pt,6pt)(0pt,12pt)
\psline(5pt,6pt)(10pt,12pt)
\end{pspicture} 
}
\def\BTreeVIIIA{
\begin{pspicture}(0,0)(12pt,18pt)
\psdot(5pt,0pt)
\psdot(5pt,5pt)
\psdot(5pt,10pt)
\psdot(5pt,15pt)
\psline(5pt,0)(5pt,5pt)
\psline(5pt,5pt)(5pt,10pt)
\psline(5pt,10pt)(5pt,15pt)
\end{pspicture} 
}
\title{Estimating Global Errors in Time Stepping\thanks{This
    material is based upon work supported by the U.S.\ Department of
    Energy, Office of Science, Advanced Scientific
    Computing Research Program under contract DE-AC02-06CH11357, FWP
    \#56706 and \#57K87.}}   
\author{Emil Constantinescu\thanks{Mathematics and Computer Science
    Division, Argonne National Laboratory, 9700 S. Cass Avenue,
    Argonne, IL 60439 ({\tt emconsta@mcs.anl.gov}).}} 
\begin{document}

\maketitle
%\vspace{-1in}
%{\bf Preprint ANL/MCS-P5077-0214}
%\vspace{1in}

\begin{abstract}
This study introduces new time-stepping strategies with built-in
global error estimators. The new methods propagate the defect along
with the numerical solution much like solving for the correction
or Zadunaisky's procedure;
however, the proposed approach allows for overlapped internal
computations and, therefore, represents a generalization of the
classical numerical schemes for solving differential equations with
global error estimation. The resulting algorithms can be effectively
represented as general linear methods. We present a few explicit
self-starting schemes akin to Runge-Kutta methods with global error
estimation and illustrate the theoretical considerations on several
examples. 
\end{abstract}

\begin{keywords}
  time integration, local and global error estimation,  general linear
  methods
\end{keywords}

\begin{AMS}
65L05, 65L06, 65L20, 65L70
\end{AMS}

\pagestyle{myheadings}
\thispagestyle{plain}
\markboth{E.M. Constantinescu}{Estimating Global Errors in Time Stepping}

%\today

%%%%%%%%%%%%%%%%%%%%%%%%%%%%%%%%%%%
%
\section{Introduction}
%
%%%%%%%%%%%%%%%%%%%%%%%%%%%%%%%%%%%
%\pstree[radius=3pt]{\Toval{root}}{\TC*\TC*\TC*\TC*}

The global error or {\em a posteriori} error represents the actual numerical
error resulting after applying a time-stepping algorithm. Calculating
this error and controlling it by step-size reduction are
generally viewed as expensive processes, and therefore in practice
only local error or the error from one step to the next is used for
error estimation and control
\cite{Sundials,petsc-user-ref-e,Trilinos,MATLAB}. In general, however,
local error estimates cannot predict how those local errors will
propagate through the simulation, and for some problems these local
errors can grow to be larger than intended. Therefore, 
from the end-user perspective, local error estimation (LEE) is not
always suitable, 
especially for problems with unstable modes or long integration times
\cite{Higham_1991,Hairer_B2008_I} because the solution may end up
having unexpectedly large numerical errors. This aspect prompts 
us to revisit global error estimation in order to make it more
transparent and practical, which ultimately leads to better error
control and reliable accuracy.  

In this study we introduce and analyze efficient
strategies for estimating global errors for 
time-stepping algorithms.  We present a unifying approach that
includes most of the classical strategies as particular cases, and we 
develop new algorithms that fall under general linear time-stepping
schemes. One of the most comprehensive surveys for global error
estimation is by Skeel \cite{Skeel_1986}. We focus on a subset of the
methods discussed therein and generalize some of the results presented
there. 

Global error estimation in time stepping has a long history
\cite{Jeannerod_1998,Aid_1997,Scholz_1989,Richert_1987,Merluzzi_1978,Shampine_1986b,Dormand_1989,Dormand_1984,Dormand_1985,Dormand_1994,Makazaga_2003,Prince_1978,Rive_1975,Shampine_2005b,Lang_2007,Enright_1989,Utumi_1996,Estep_1995}. {\em
  A
posteriori} global error estimation has been recently discussed in
\cite{Cao_2005,Estep_2005,Kulikov_2012,Banks_2012}. Step-size control
with multimethod Runge-Kutta (RK) is discussed in
\cite{Shampine_TR1979,Shampine_1986a,Shampine_1986b,Cash_1990}. Global
error estimation for stiff problems is discussed in
\cite{Dahlquist_1982,Kulikov_2013,Stetter_1974,Stetter_1982}. Adjoint
methods 
for global error estimation for PDEs \cite{Berzins_1988,Lawson_1991} are analyzed in
\cite{Giles_2002,Connors_2013}. These studies cover most of the
types of strategy that have been proposed to address global error
estimation. The Zadunaisky procedure  \cite{Zadunaisky_1976} and
the related procedure for solving for the correction \cite{Skeel_1986} are 
arguably the most popular global-error estimation strategies \cite{Aid_1997,Jeannerod_1998}. The work
of Dormand et al. \cite{Dormand_1984,Dormand_1985} relies on this
procedure and is extended to a composition of RK methods in
\cite{Dormand_1994}. Further extensions are introduced by Murua and Makazaga
\cite{Murua_2000,Makazaga_2003}. Shampine \cite{Shampine_1986b} proposes
using multiple methods to estimate global errors.  

Recent work on global error control by Kulikov, Weiner, et al. 
\cite{Weiner_2012a,Kulikov_2010b,Kulikov_2010} extends the quasi-consistency property
introduced by Skeel \cite{Skeel_1976} and recently advanced by Kulikov
\cite{Kulikov_2009}. Moreover, these
ideas were extended to peer methods by using a peer-triplets strategy by
Weiner and Kulikov  
\cite{Weiner_2014}. These
strategies seem to give very good results in terms of global
error control on prototypical problems. The general linear (GL)-based algorithm proposed in
this study bears a more general representation of the methods
discussed above. We will also show how Runge-Kutta triplets
\cite{Dormand_1985} and by proxy the peer triplets \cite{Weiner_2014}
are naturally represented as GL methods.

Our work builds on similar ideas introduced by Shampine
\cite{Shampine_1986b} and  
Zadunaisky \cite{Zadunaisky_1976} and the followups in the sense that
the strategy evolves the defect along with 
the solution; however, in our strategy the internal calculations of
the two quantities can be overlapped by using a single scheme to
evolve them simultaneously. Therefore, the new method automatically integrates
the local truncation error or defect. Previous strategies can be cast as
particular cases of the one introduced in this study when the overlapping
part is omitted. This leads to new types of
schemes that are naturally represented as GL methods,
which are perfectly suited for this strategy, as we demonstrate.

The general linear methods introduced in this study have built-in
asymptotically correct global and local error estimators. These
methods propagate at least two quantities; one of them is the solution,
and the other one can be either the global error or equivalently
another solution that can be used to determine the global errors. We
show that two new elements are required for GL methods to propagate
global errors: $(i)$ a particular 
relation between the truncation error of the two quantities and $(ii)$
a decoupling property between the errors of its two outputs. The
GL framework encapsulates all linear time-stepping algorithms and
provides a platform for robust algorithms with built-in error
estimates. Moreover, this encapsulated treatment simplifies the analysis
of compound schemes used for global error analysis; for instance,
stability analysis turns out to be much simpler in this representation.

We consider the first-order system of nonautonomous
ordinary differential equations 
\begin{align}
\label{eq:ODE}
&y(\tx)'=f(\tx,y(\tx))\,;~ y(\tx_0)=y_0\,,~ \tx_0 < \tx \le \Tx\,, ~ y\in
\mathbb{R}^{m}, f:\mathbb{R}^{m+1} \rightarrow \mathbb{R}^{m}\,,
\end{align}
of size $m$ with $y_0$ given. We will use the tensor notation denoting
the components in \eqref{eq:ODE} by $y^{\{j\}}$ and $f^{\{j\}}$,
$j=1,\,2,\,  \dots ,\, m$. We will often consider nonautonomous systems
because the exposition is less cluttered. In order to convert \eqref{eq:ODE} to
autonomous form, the system can be augmented with
$(y^{\{m+1\}})'=1$, with $y^{\{m+1\}}(\tx_0)=\tx_0$; hence,
$\tx=y^{\{m+1\}}(\tx)$.
%%
%%\begin{align}
%\label{eq:ODE:auto}
%&y(\tx)'=f(y(\tx))\,;~ y(\tx_0)=y_0\,,~ \tx_0 < \tx \le \Tx\,, ~ y\in
%\mathbb{R}^{m+1}, f:\mathbb{R}^{m+1} \rightarrow \mathbb{R}^{m+1}\,.
%\end{align}
%%
This is likely not a restrictive theoretical assumption,
but there can be exceptions \cite{Oliver_1975}; however, in practice
it is preferable to treat the temporal components
separately. For brevity, we will
refer to \eqref{eq:ODE} in both autonomous and nonautonomous forms
depending on the context. 

The purpose of this study is to analyze strategies for estimating the
global error at every time step $n$,
\begin{align}
&\varepsilon(\tx_n)=y(\tx_n)-y_n \,, ~ n=1,2,
\dots, N_T\,,
\label{eq:ODE:GlobalError}
\end{align}
that is, the difference between the exact solution $y(\tx_n)$ and a numerical
approximation $y_n$ from a sequence of $N_T$ steps. {\em A priori} and
{\em a posteriori} error bounds under appropriate 
smoothness assumptions are well known
\cite{Henrici_1962,Hairer_B2008_I}. This study focuses on efficient
     {\em a posteriori} estimates of $\varepsilon(\tx_n)$.  

This study has several limitations. Stiff
differential equations are not directly addressed. Although the
theory presented here, more precisely the consistency result, applies
to the stiff case; however, additional
constraints are believed to be necessary to preserve the asymptotic
correctness of the error estimators. Moreover, we do not discuss
global error control.  Although algorithms introduced in this study
work well with variable time steps as we illustrate through the
results in Fig. \ref{fig:nr:err:adapt}, we do not address error control
strategies here. This is a topic for a future study; however,
local error control can be used as before with global error estimates
being a diagnostic of the output.

We aim to bring a self-contained view of global error estimation. New
results are interlaced with classical theory to provide a contained
picture for this topic. We also illustrate the connection among
different strategies. The proposed algorithm generalizes all the
strategies reviewed in this study and provides a robust instrument for
estimating {\em a posteriori} errors in numerical integration. Section
\ref{sec:Global:Errors} introduces the background for the 
theoretical developments and 
%\ref{sec:Error:Equation}-\ref{sec:extrapolation} 
discusses different
strategies to estimate the global errors, which include developments
that form the basis of the proposed approach. 
%(Sec. \ref{sec:two:methods}). 
In Sec. \ref{sec:GLMs} we discuss the
general linear methods that are used to represent practical
algorithms. The analysis of these schemes and examples are provided in
Sec. \ref{sec:Methods:GEM}. In Sec. \ref{sec:relations} we discuss the relationship between the
approach introduced here and related strategies and show how the
latter are particular instantiations of the former. Several numerical experiments are
presented in Sec. \ref{sec:Numerical:Exeriments}, and concluding
remarks are discussed in Sec. \ref{sec:discussion}. 

%%%%%%%%%%%%%%%%%%%%%%%%%%%%%%%%%%%%%%%%%%
%
\section{Global error estimation\label{sec:Global:Errors}}
%
%%%%%%%%%%%%%%%%%%%%%%%%%%%%%%%%%%%%%%%%%%

Let us consider a one-step linear numerical discretization method for
\eqref{eq:ODE}, 
\begin{align}
%\nonumber
\label{eq:Henrici:OneStepNotation}
y_{n+1} = y_{n} + \dt \Phi(\tx_{n},y_{n},\dt_n)\,, ~y_0=y(\tx_0)\,,~ n=1,2,
\dots , N_T \,,
\end{align} 
where $\Phi$ is called the Taylor increment function with
$\Phi(\tx_{n},y_{n},0)=f(\tx_n,y_n)$. We denote the time series
obtained via \eqref{eq:Henrici:OneStepNotation} with step $\dt$ by $\{y_\dt\}$.
A method of order $p$ for a sufficiently smooth function $f$ satisfies 
\begin{subequations}
\label{eq:local:error:bound}
\begin{align}
\label{eq:local:error:bound:1}
||y(\tx_n+\dt) - y_{n+1} || \le  C_1 \dt^{p+1}\,, y_n = y(\tx_n)\,,
\end{align}
for a constant $C_1$. The local error then satisfies 
\begin{align}
\label{eq:local:error:expansion}
y(\tx + \dt) - y(\tx) - \dt \Phi(\tx,y(\tx),\dt)  =
%\\
%\nonumber ~&
  d_{p+1}(\tx)
\dt^{p+1} + %\cdots +  d_{N+1}(\tx)
%\dt^{N+1} + 
{\mathcal O}(\dt^{p+2}) \,.
\end{align}
\end{subequations} 
The following classical result states the bounds on the global errors.
\begin{theorem} 
%Hairer I p160 Thm 3.4.
%henrici p 15 for continuity conditions
\label{th:glb:err:bnds} 
Let $U$ be a neighborhood of $\{(\tx,y(\tx)) | \tx_0 \le \tx \le T \}$,
where  $y(\tx)$ is the exact solution of \eqref{eq:ODE} and there
exists a constant $L$ such that $|| f(\tx,y)-f(\tx,z) || \le L  ||y-z|| $ and
\eqref{eq:local:error:bound} is satisfied for $(\tx,x)$, $(\tx,y) \in
U$. Then 
\begin{align}
\label{eq:global:error:bounds}
|| \varepsilon(t) || \le \dt^p \frac{C_2}{L} \left(e^{L(\tx -
  \tx_0)} - 1 \right) \, 
\end{align}
for a constant $C_2$.
\end{theorem}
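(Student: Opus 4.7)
The plan is to turn the global-error statement into a one-step recursion and close it with a discrete Gronwall argument, as in the classical Henrici proof. First I would write
$$\varepsilon(\tx_{n+1}) = \bigl[y(\tx_{n+1}) - y(\tx_n) - \dt\,\Phi(\tx_n,y(\tx_n),\dt)\bigr] + \varepsilon(\tx_n) + \dt\bigl[\Phi(\tx_n,y(\tx_n),\dt) - \Phi(\tx_n,y_n,\dt)\bigr].$$
The first bracket is precisely the local truncation error evaluated on the exact solution, so by \eqref{eq:local:error:bound:1} its norm is at most $C_1\,\dt^{p+1}$ provided $(\tx_n,y(\tx_n))\in U$. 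The third term is the error-amplification piece; since $\Phi(\tx,y,\dt)$ is built from evaluations of $f$ (which is $L$-Lipschitz in $y$ on $U$), standard estimates show that $\Phi$ is Lipschitz in its $y$-argument with a constant $L'$ that satisfies $L'=L+\mathcal{O}(\dt)$. Combining these gives the scalar recursion
$$\|\varepsilon(\tx_{n+1})\| \le (1+\dt\,L')\,\|\varepsilon(\tx_n)\| + C_1\,\dt^{p+1}, \qquad \varepsilon(\tx_0)=0.$$

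From here a direct induction yields
$$\|\varepsilon(\tx_n)\| \le C_1\,\dt^{p+1}\sum_{k=0}^{n-1}(1+\dt\,L')^k = \frac{C_1\,\dt^{p}}{L'}\bigl[(1+\dt\,L')^n - 1\bigr],$$
and the elementary inequality $(1+\dt\,L')^n \le e^{n\dt\,L'} \le e^{L'(\tx_n-\tx_0)}$ delivers the advertised bound \eqref{eq:global:error:bounds}, with the constant $C_2$ absorbing $C_1$ and the gap between $L'$ and $L$ (which vanishes as $\dt\to 0$).

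The main obstacle is a bookkeeping one rather than an analytic one: the recursion presumes that the numerical iterates $(\tx_n,y_n)$ stay inside the neighborhood $U$ on which the Lipschitz estimate and the local-error assumption \eqref{eq:local:error:bound:1} are valid, and this is not given a priori—it must be argued jointly with the bound one is trying to establish. The standard remedy is an inductive continuation argument: restrict to a compact subinterval, choose $\dt$ small enough that the tentative bound $\|\varepsilon(\tx_n)\|\le \dt^p C_2/L\bigl(e^{L(T-\tx_0)}-1\bigr)$ keeps $y_n$ within $U$, and then promote the tentative bound to an actual one by induction. A secondary technicality is the passage from the Lipschitz constant of $f$ to that of $\Phi$, which for a consistent method introduces only higher-order corrections and is harmless once $\dt$ is sufficiently small.
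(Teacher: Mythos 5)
Your argument is the standard Henrici-style proof---local-error-plus-propagation decomposition, Lipschitz bound on the increment function, discrete Gronwall summation---which is exactly the classical proof the paper defers to by citation rather than reproducing. The proposal is correct, and your remarks on keeping the iterates in $U$ and on the gap between the Lipschitz constants of $f$ and $\Phi$ address the only genuine technical points.
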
 

This is proved in several treatises
\cite{Henrici_1962,Spijker_1971,Hairer_B2008_I}. Under
sufficient smoothness assumptions \cite{Henrici_1962,Hairer_1984}, it
follows that the 
{\em global error} satisfies   
\begin{align}
\label{eq:global:error:general}
\varepsilon(\tx) = y(\tx) - y_{\dt}(\tx) = e_{p}(\tx) \dt^p + {o}(\dt^p)\,,
\end{align}
where $y_{n} := y_{\dt}(\tx)$ at $\tx = \tx_{0} + n \dt$. 
These results are obtained by comparing the expansions of the exact
and the numerical solutions. To alleviate the analysis difficulties that come with large $p$, we use the B-series representation of the derivatives. 
\begin{definition}[\rm Rooted trees and labeled trees
    \cite{Butcher_1963,Chartier_2010}\label{def:LT}] 
Let $\TreeIndexSet$ be a set of ordered indexes $\TreeIndexSet_q=\{j_1 <
j_2 < j_3 < \cdots < j_q\}$ with cardinality $q$. A labeled tree of
order $q$ is a
mapping $\Ttree  : \TreeIndexSet_q \backslash \{j_1\} \rightarrow
\TreeIndexSet_q$ such that $\Ttree(j)<j$, $\forall j \in
\TreeIndexSet_q\backslash \{j_1\}$. The set of all labeled trees of
order $q$ is denoted by $\LabeledTreeSet_q$. The order of a tree is
denoted by $\TreeOrder(\Ttree)=q$. Furthermore, we define an
equivalence class of order $q$ as the permutation
$\sigma:\TreeIndexSet_q  \rightarrow \TreeIndexSet_q$ such that
$\sigma(j) = j$, $\Ttree_k \sigma = \sigma \Ttree_\ell$,
$\Ttree_k,\Ttree_\ell \in \LabeledTreeSet_q$. These unlabeled trees of
order $q$ are denoted by $\TreeSet_q$, and the number of different
monotonic labelings 
of $\Ttree \in  \TreeSet_q$ is denoted by $\alpha(\Ttree)$.  Also,
$\TreeSet_q^\#=\TreeSet_q \cup {\emptyset}$, where $\emptyset$ is the empty tree
and the only one with  $\TreeOrder(\emptyset)=0$.
\end{definition}

\begin{definition}[Elementary differentials
    \cite{Butcher_1963,Chartier_2010}\label{def:ElDiff}] 
For a labeled tree  $\Ttree \in \LabeledTreeSet_q $ we call an
elementary differential the expression
\begin{align}
F^{\{K_1\}} (\Ttree)(y) = \sum_{K_2,K_3,\dots,K_{q}} \prod_{i=1}^q
f^{\{K_i\}}_{\Ttree^{-1}(K_i)}\,,
\end{align}
where $K_1,K_2,\dots,K_q = 1,2,\dots,m$, and 
$f^{\{J\}}_{K_1,K_2,\dots,K_r}=\partial^r f^{\{J\}}/ \partial
y^{\{K_1\}}y^{\{K_2\}} \dots y^{\{K_r\}} $. We denote by
$F(\Ttree)(y)=[F^{\{1\}} (\Ttree)(y) ,F^{\{2\}} (\Ttree)(y) , \dots,
  F^{\{m\}} (\Ttree)(y) ]^T$. 
\end{definition}

\noindent{}We use the graphical notation to represent derivatives discussed in \cite{Butcher_B2008,Hairer_B2008_I}.
\paragraph{Example} The tree $\BTreeVIIA$ corresponds to $f'f''(f,f)$. The trees of order 4 are 
$\TreeSet_4=\left\{\BTreeVA,\BTreeVIA,\BTreeVIIA,\BTreeVIIIA\right\}$,
$\alpha(\Ttree)=1$ for $\Ttree \in \TreeSet_4 \backslash
\left\{\BTreeVIA\right\}$, $\alpha\left(\BTreeVIA\right)=3$.
\begin{definition}[B-series\label{def:B-Series} \cite{Hairer_1974}] 
Let $a:\TreeSet \rightarrow \mathbb{R}$ be a mapping between the tree set
and real numbers. The following is called a B-series:
\begin{align}
\nonumber
B(a,y)&=a(\emptyset) y + \dt\, a(\BTreeIA) f(y) + \frac{\dt^2}{2} a(\BTreeIIA)
F( \BTreeIIA)(y) + \cdots \\
\label{eq:BSeries}
&= \sum_{\Ttree \in \TreeSet}
\frac{\dt^{\TreeOrder(\Ttree)} \alpha(\Ttree)}{\TreeOrder(\Ttree)!}
a(\Ttree) F(\Ttree) (y) \,,
\end{align} 
where $\TreeSet = \{\emptyset\} \bigcup \TreeSet_1 \bigcup \TreeSet_2
\bigcup \cdots$.
\end{definition}

The exact solution of an ODE system is a B-series
\cite{Hairer_1974}. Formally we have the following result.
\begin{theorem}[Exact solution as
    B-series\label{thm:B-Series}\cite{Hairer_1974}] The 
  exact solution of \eqref{eq:ODE}%:auto 
satisfies %Theorem 2.6 page 148  
\begin{align}
\nonumber
y^{(q)}(\Ttree) = \sum_{\Ttree \in \TreeSet} \alpha(\Ttree) F(\Ttree) (y) \,.
\end{align}
Therefore the exact solution is given by \eqref{eq:BSeries} with
$a(\Ttree)=1$, and the coefficient of $\dt^{\TreeOrder(\Ttree)}
F(\Ttree) (y)$ in the expansion is given by
$\frac{\alpha(\Ttree)}{\TreeOrder(\Ttree)!}$, $\forall \Ttree \in
\TreeSet_k$, $k=1,2,\dots,p$. 
\end{theorem}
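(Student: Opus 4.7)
The plan is to establish both claims by induction on the order $q$, followed by substitution into the Taylor series. I would state the induction hypothesis precisely as
\[
y^{(q)}(t) \;=\; \sum_{\Ttree \in \TreeSet_q} \alpha(\Ttree)\,F(\Ttree)\bigl(y(t)\bigr),
\]
for every $q \ge 1$, and handle $q=0,1$ as base cases. For $q=0$ the identity is $y = y$ (empty tree, conventionally handled by $a(\emptyset)=1$), and for $q=1$ we have $\TreeSet_1 = \{\BTreeIA\}$ with $\alpha(\BTreeIA)=1$ and $F(\BTreeIA)(y) = f(y)$, so $y^{(1)} = f(y)$ checks out directly from \eqref{eq:ODE}.

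For the inductive step, I would differentiate the right-hand side with respect to $t$, using $y'(t) = f(y(t))$. The key computational lemma is that for each $\Ttree \in \TreeSet_q$ one has
\[
\frac{d}{dt}\,F(\Ttree)\bigl(y(t)\bigr) \;=\; \sum_{\Ttree' \in \mathcal{G}(\Ttree)} F(\Ttree')\bigl(y(t)\bigr),
\]
where $\mathcal{G}(\Ttree)$ is the multiset of order-$(q+1)$ trees obtained by grafting a new leaf onto each node of $\Ttree$ in turn. This is seen from Definition~\ref{def:ElDiff}: each $y$-dependence inside $F(\Ttree)$, when differentiated via the chain rule, introduces an extra factor $f^{\{K_{q+1}\}}$ attached to the node being differentiated, which is exactly a new leaf with label $j_{q+1}$ appended to that node in the labeling map $\Ttree$.

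The combinatorial heart of the proof is then to show that regrouping the grafted trees by isomorphism class yields the weights $\alpha(\Ttree')$ for $\Ttree' \in \TreeSet_{q+1}$. I would exhibit a bijection between (a) monotonic labelings of $\Ttree'$, and (b) pairs consisting of a monotonic labeling of some $\Ttree \in \TreeSet_q$ together with a choice of node of $\Ttree$ at which to graft the maximal label $j_{q+1}$. Removing the node labeled $j_{q+1}$ from any monotonic labeling of $\Ttree'$ produces such a pair; conversely, grafting at the distinguished node produces a monotonic labeling of $\Ttree'$. This bijection gives
\[
\sum_{\Ttree \in \TreeSet_q} \alpha(\Ttree)\,\bigl|\{\Ttree' \in \mathcal{G}(\Ttree)\}_{\Ttree'\,\text{fixed}}\bigr| \;=\; \alpha(\Ttree')
\]
for each $\Ttree' \in \TreeSet_{q+1}$, completing the induction.

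Finally, substituting the induction result into the Taylor expansion
\[
y(t+\dt) \;=\; \sum_{q \ge 0} \frac{\dt^q}{q!}\,y^{(q)}(t) \;=\; \sum_{q \ge 0} \frac{\dt^q}{q!} \sum_{\Ttree \in \TreeSet_q} \alpha(\Ttree) F(\Ttree)\bigl(y(t)\bigr)
\]
and reindexing over $\TreeSet = \bigcup_{q \ge 0} \TreeSet_q$ yields exactly the B-series \eqref{eq:BSeries} with $a(\Ttree) \equiv 1$, so that the $\dt^{\TreeOrder(\Ttree)} F(\Ttree)(y)$ coefficient is $\alpha(\Ttree)/\TreeOrder(\Ttree)!$, as claimed. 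The main obstacle is the combinatorial bijection step; the differentiation formula itself is a routine chain-rule bookkeeping once one adopts the labeled-tree viewpoint of Definition~\ref{def:LT}, whereas matching grafting counts to $\alpha(\Ttree')$ requires care with the root-preserving, order-preserving automorphisms that define the equivalence classes.
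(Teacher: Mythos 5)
Your proof is correct. The paper does not prove this theorem at all --- it is stated as a classical result and cited to Hairer (1974) --- and your argument is precisely the standard one from that literature: induction on the derivative order, the chain-rule computation showing that differentiating $F(\Ttree)$ grafts a new leaf onto each node, and the bijection between monotonic labelings of an order-$(q+1)$ tree and pairs (monotonic labeling of an order-$q$ tree, grafting node), which works because the maximal label in a monotonic labeling necessarily sits at a leaf. Your reading of the (slightly garbled) statement as $y^{(q)}(t)=\sum_{\Ttree\in\TreeSet_q}\alpha(\Ttree)F(\Ttree)(y)$ is the intended one, and the final substitution into the Taylor series delivering $a(\Ttree)\equiv 1$ is exactly right.
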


The elementary weights in the expression of the B-series are
independent. The following result captures this aspect.
\begin{lemma}[Independence of elemetary
    differentials\label{lem:ED:Independence} \cite{Butcher_B2008}]  
The elementary differentials are independent. Moreover, the values of
the distinct elementary differentials for $(y^{\{j\}})'=\prod_{j=1}^{k}
(y^{\{j\}})^{m_j}/m_j!$, $y^{\{j\}}(\tx_0)=0$ are given by
$F(\Ttree_i)(y(\tx_0))=e_i$, where $k$ is the number of resulting trees
when the root is removed and $m_j$ is the number of copies of $\Ttree_j$.
\end{lemma}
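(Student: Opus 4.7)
The plan is to establish both claims simultaneously by constructing, for any finite list of distinct trees $\Ttree_1,\ldots,\Ttree_N$, an ODE system on $\mathbb{R}^N$ with $y(\tx_0)=0$ whose elementary differentials satisfy $F(\Ttree_i)(y(\tx_0))=e_i$. Once this is done, linear independence follows immediately: any identical vanishing $\sum_i c_i F(\Ttree_i)\equiv 0$ evaluated at the initial point of such a test system gives $\sum_i c_i e_i = 0$, forcing every $c_i=0$. The existence of such a test system for a single fixed tree is therefore the core content, with a straightforward extension to finite lists by taking direct sums of the constructions.

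I would proceed by induction on the tree order $\TreeOrder(\Ttree)$. For the base case $\Ttree=\BTreeIA$, one has $F(\BTreeIA)(y)=f(y)$, so assigning a component of $f$ to be the constant $1$ and the rest to be zero suffices. For the inductive step, given $\Ttree$ whose root, after removal, leaves the distinct subtrees $\Ttree_1,\ldots,\Ttree_k$ with multiplicities $m_1,\ldots,m_k$, index the new component so that the subtree $\Ttree_j$ corresponds to the coordinate direction $j$ already realized by the inductive construction, then define the component of $f$ associated with $\Ttree$ to be the monomial $\prod_{j=1}^{k}(y^{\{j\}})^{m_j}/m_j!$. By Definition~\ref{def:ElDiff}, $F(\Ttree)(y(\tx_0))$ at $y(\tx_0)=0$ picks up exactly the derivative of this monomial with respect to $y^{\{j\}}$ taken $m_j$ times for each $j$. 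That partial derivative equals $m_j!/m_j!=1$ for each factor, so $F(\Ttree)(0)$ is the coordinate vector assigned to $\Ttree$.

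It remains to verify that for any other tree $\Ttree'\neq \Ttree$ of equal or lower order appearing in the construction, $F(\Ttree')(0)$ lies along its own assigned coordinate and has no component along the coordinate of $\Ttree$. This follows because a structurally different tree would call either for too many or for too few differentiations of some variable $y^{\{j\}}$ in the monomial for $\Ttree$, causing the term to vanish at the origin; similarly, lower-order subtrees see a different component of $f$ (already fixed inductively and involving different variables), so cross-contamination is excluded. Reading off the coordinates then yields $F(\Ttree_i)(y(\tx_0))=e_i$ simultaneously for all trees in the finite list.

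The main obstacle is the combinatorial bookkeeping when the root has repeated identical children. One must verify carefully that the sum over index assignments $K_2,\ldots,K_q$ in the formula of Definition~\ref{def:ElDiff}, restricted to the unique assignment that sends each of the $m_j$ copies of $\Ttree_j$ to coordinate $j$, produces a single combinatorial factor of $\prod_j m_j!$, and that this factor cancels exactly against the $1/\prod_j m_j!$ built into the monomial. The permutations among identical subtrees, counted implicitly by $\alpha(\Ttree)$ in Definition~\ref{def:LT}, must be separated from the internal labelings of each $\Ttree_j$; keeping the coordinate assignment consistent across the induction, so that this cancellation is clean, is the delicate part of the argument.
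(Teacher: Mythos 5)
The paper gives no proof of this lemma---it is imported directly from Butcher's monograph---so there is nothing internal to compare against; your argument is the standard one and is precisely the construction already encoded in the statement itself (one component of $f$ per tree, equal to the monomial $\prod_j (y^{\{j\}})^{m_j}/m_j!$ in the coordinates of the root's subtrees, evaluated at $y(\tx_0)=0$). Your execution is correct, with the one small clarification that the factor $\prod_j m_j!$ does not come from a multiplicity of index assignments in the sum of Definition~\ref{def:ElDiff} (that sum collapses to the single assignment sending each vertex to the coordinate of its own subtree's isomorphism class, since any other choice makes some factor vanish at the origin) but solely from differentiating $(y^{\{j\}})^{m_j}$ exactly $m_j$ times, which is then cancelled by the built-in normalization exactly as you state.
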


The order of the numerical method can be defined in terms of a B-series as follows. 
\begin{definition}[Order of time-stepping methods\label{def:Order}] 
A numerical method applied to \eqref{eq:ODE} %:auto 
 with $f$ $p$-times
continuous differentiable is of order $p$ if the expansion of the
numerical solution satisfies \eqref{eq:BSeries} with
$\TreeOrder(\Ttree) \le p$.
\end{definition}
%

%%%%%%%%%%%%%%%%%%%%%%%%%%%%%%%%%%%%%%%%%%
%
\subsection{Error equation\label{sec:Error:Equation}}
%
%%%%%%%%%%%%%%%%%%%%%%%%%%%%%%%%%%%%%%%%%%

We now analyze the propagation of numerical errors through the time-stepping
processes. 

\begin{theorem}[Asymptotic expansion of the global errors \cite{Henrici_1962,Hairer_B2008_I}\label{thm:GlogalErrorExpansion}]
Suppose that method \eqref{eq:Henrici:OneStepNotation} possesses an
expansion \eqref{eq:local:error:expansion} under smoothness conditions
of Theorem \ref{th:glb:err:bnds}. Then the global error has an
asymptotic expansion of form 
\begin{align}
\label{eq:glb:expansion}
y(\tx)-y_\dt(\tx) = e_p(\tx) \dt^p + \dots + e_N(\tx) \dt^N +
E_\dt(\tx) \dt^{N+1}\,, 
\end{align}
where $E_\dt(\tx)$ is bounded on $\tx_0<\tx \le \Tx$ and $0 \le \dt \le
\dT$ for some $\dT$, and $e_p(\tx)$ satisfies 
\begin{align}
\label{eq:global:error:ode}
 e_{p}'(\tx) =
 \frac{\partial  f}{\partial y}(\tx,y)\cdot e_{p}(\tx) + d_{p+1}(\tx)\,,
 \quad e_p(\tx_0)=0\,.
\end{align}
The other $e_j(\tx)$ terms satisfy similar equations.  
\end{theorem}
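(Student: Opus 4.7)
The plan is to establish the expansion by Gragg's classical construction: I introduce smooth auxiliary functions $e_p, e_{p+1}, \ldots, e_N$ (with the leading one defined by \eqref{eq:global:error:ode}) and show that the perturbed trajectory
$$\tilde y_\dt(\tx) := y(\tx) - \sum_{j=p}^{N} e_j(\tx)\,\dt^j$$
satisfies the one-step scheme \eqref{eq:Henrici:OneStepNotation} with a local truncation error of order $\dt^{N+2}$. Once this defect bound is in place, applying Theorem \ref{th:glb:err:bnds} to $\tilde y_\dt$ and the numerical trajectory $y_\dt$ yields $\tilde y_\dt(\tx_n) - y_n = {\cal O}(\dt^{N+1})$ uniformly on $[\tx_0,\Tx]$, which, upon rearrangement, is exactly the claimed expansion with a bounded remainder $E_\dt(\tx)$.

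To identify the $e_j$'s and, in particular, to derive \eqref{eq:global:error:ode}, I would Taylor-expand the defect
$$R(\tx,\dt) := \tilde y_\dt(\tx+\dt) - \tilde y_\dt(\tx) - \dt\,\Phi\bigl(\tx,\tilde y_\dt(\tx),\dt\bigr)$$
in powers of $\dt$. The $y(\tx)$ contribution reproduces \eqref{eq:local:error:expansion} with leading term $d_{p+1}(\tx)\,\dt^{p+1}$. Each subtracted $e_j(\tx)\,\dt^j$ contributes twice: its own time increment on the left gives $-e_j'(\tx)\,\dt^{j+1}$ plus higher-order Taylor tails, while linearizing $\Phi$ in its second argument and using $\Phi(\tx,y,0)=f(\tx,y)$ contributes $+\,\frac{\partial f}{\partial y}(\tx,y(\tx))\,e_j(\tx)\,\dt^{j+1}$ to leading order. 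Demanding that $R(\tx,\dt)$ vanish through order $\dt^{N+1}$ produces, at the $\dt^{p+1}$ level,
$$e_p'(\tx) - \frac{\partial f}{\partial y}\bigl(\tx,y(\tx)\bigr)\,e_p(\tx) - d_{p+1}(\tx) = 0,$$
which is \eqref{eq:global:error:ode}. The initial conditions $e_j(\tx_0)=0$ are forced by $\tilde y_\dt(\tx_0)=y(\tx_0)=y_0$, so each linear variational problem has a unique smooth solution on the compact interval $[\tx_0,\Tx]$. At higher levels $\dt^{k+1}$ with $p<k\le N$ the same procedure yields inhomogeneous linear ODEs for $e_k$ with the same homogeneous part $\partial f/\partial y$ and forcings built from $e_p,\ldots,e_{k-1}$, further Taylor coefficients of $\Phi$, and the subsequent local-error terms $d_{k+1}(\tx)$.

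The main obstacle is the bookkeeping in this expansion: systematically matching powers of $\dt$ in $\Phi\!\bigl(\tx,\,y(\tx)-\sum_j e_j\dt^j,\,\dt\bigr)$ requires enough smoothness of $f$ (and hence of $\Phi$) to guarantee that the Taylor remainders are genuinely ${\cal O}(\dt^{N+2})$, and one must check that all cross terms between distinct $e_j$'s fall into this remainder. The B-series machinery introduced in Theorem \ref{thm:B-Series} together with the independence statement in Lemma \ref{lem:ED:Independence} organizes these higher-order corrections cleanly when $N$ is large, but the leading equation for $e_p$ uses only the first-order linearization of $\Phi$ around the exact trajectory. Finally, the boundedness of $E_\dt(\tx)$ follows by applying the Lipschitz-based discrete Gronwall estimate underlying Theorem \ref{th:glb:err:bnds} to the sequence $\tilde y_\dt - y_\dt$, uniformly for $\dt\in(0,\dT]$.
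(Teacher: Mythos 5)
Your proposal is correct and follows essentially the same argument as the paper: construct a perturbed trajectory whose defect under the scheme is pushed to higher order by choosing $e_p$ (and the subsequent $e_j$) to satisfy the variational ODE \eqref{eq:global:error:ode}, then invoke the Gronwall-type stability estimate behind Theorem \ref{th:glb:err:bnds} to convert the small defect into the expansion \eqref{eq:glb:expansion}. The only difference is cosmetic --- you subtract the correction terms from the exact solution and compare with $y_\dt$, while the paper adds $e_p(\tx)\dt^p$ to the numerical solution, views the result as the output of a modified increment function $\widehat{\Phi}$, and compares with $y(\tx)$; your version also spells out the full induction to order $N$, which the paper only sketches.
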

\begin{proof}
Consider a perturbed method $\widehat{y}_{\dt}(\tx):=y_{\dt}(\tx) +
e_{p}(\tx) \dt^{p}$. Then $\widehat{y}_{\dt}(\tx)$ can be represented
as the numerical solution of a new method: $\widehat{y}_{n+1}=
\widehat{y}_{n} + \dt \widehat{\Phi}(\tx_{n},\widehat{y}_{n},\dt)$. 
By comparison with \eqref{eq:Henrici:OneStepNotation} we obtain
$\{\widehat{y}_{n}\}$, 
\begin{align}
\label{eq:local:error:PhiHat}
\widehat{\Phi}(\tx,\widehat{y}_{n} ,\dt) & =
\Phi(\tx_n,\widehat{y}_{n} -e_{p}(\tx_n) \dt^{p},\dt) + (e_{p}(\tx_n+\dt)
- e_{p}(\tx_n)) \dt^{p-1}\,.
\end{align}
Expanding the local error of the perturbed method with the Taylor
function defined by \eqref{eq:local:error:PhiHat} yields in general 
\begin{align}
\nonumber & y(\tx + \dt) - y(\tx) - \dt \widehat{\Phi}(\tx,y(\tx),\dt) 
\\& \qquad
\label{eq:Local:ErrorEquation}
 = \left( d_{p+1}(\tx)
 +\frac{\partial 
    f}{\partial y}(\tx,y)e_{p}(\tx)  - e_{p}'(\tx) \right)\dt^{p+1}
+{\mathcal O}(\dt^{p+2})  \,.
\end{align} 
We take $e_{p}(t)$ to satisfy \eqref{eq:global:error:ode}, so that by
Theorem  \ref{th:glb:err:bnds} it follows that
\begin{align}
\label{eq:global:error}
y(\tx) - y_{\dt}(\tx) = e_{p}(\tx) \dt^p + {\mathcal O}(\dt^{p+1})\,
\end{align}
determines the asymptotic expansion. For more details see
\cite{Hairer_B2008_I}. 
\end{proof}

Equations for the next terms in the global error expansion can be
obtained by using the same procedure; however, this is not pursued in
this study. 

%%%%%%%%%%%%%%%%%%%%%%%%%%%%%%%%%%%%%%%%%%
%
\subsubsection{Estimating global errors using two methods\label{sec:two:methods}}
%
%%%%%%%%%%%%%%%%%%%%%%%%%%%%%%%%%%%%%%%%%%

We now introduce the general global error estimation strategy used in
this study. This approach relies on propagating two solutions through
a linear time-stepping process that has the property of maintaining a
fixed ratio between the truncation error terms. This result will play
a crucial role in constructing the new methods discussed in Sec. \ref{sec:Methods:GEM} and can be
stated as follows.   
\begin{theorem}[Global error estimation with two methods\label{prop:two:methods}]
Consider numerical solutions $\{y_n\}$ and $\{\tilde{y}_n\}$ of
\eqref{eq:ODE} obtained by two time-stepping methods 
started from the same exact initial condition %:auto
under the conditions
of Theorem \ref{thm:GlogalErrorExpansion}. If the local errors of the two
methods with increments $\Phi$ and $\tilde{\Phi}$ satisfy 
\begin{subequations}
\label{eq:local:error:ode:two:methods}
\begin{align}
\label{eq:local:error:ode:two:methods:1}
%y(\tx_n+\dt) - y_{n+1} &= d_{p+1}(\tx_{n}) \dt^{p+1}  + {\mathcal O} (\dt^{p+2})\,,\\
y(\tx + \dt) - y(\tx) - \dt \Phi(\tx,y(\tx),\dt)  & = d_{p+1}(\tx_{n}) \dt^{p+1}  + {\mathcal O} (\dt^{p+2})\,,\\
\label{eq:local:error:ode:two:methods:2}
%y(\tx_n+\dt) - \tilde{y}_{n+1} &= \gamma  d_{p+1}(\tx_{n}) \dt^{p+1}  + {\mathcal O} (\dt^{p+2})\,,
y(\tx + \dt) - y(\tx) - \dt \tilde{\Phi}(\tx,y(\tx),\dt)  &= \gamma  d_{p+1}(\tx_{n}) \dt^{p+1}  + {\mathcal O} (\dt^{p+2})\,,
\end{align}
\end{subequations}
where $d_{p+1}(\tx_{n}) = \frac{1}{(p+1)!} \sum_{\Ttree \in
  \TreeSet_{p+1}}  \alpha(\Ttree) a(\Ttree) F(\Ttree)(y_{n})$  
% when $y_{n}=\tilde{y}_{n}=y(\tx_n)$ 
with constant $\gamma \ne 1$, then the global
error can be estimated as  
\begin{align}
\label{eq:global:error:two:methods}
\varepsilon_{p}(\tx_{n}) = \frac{1}{1-\gamma} \left(\tilde{y}_{n}
- y_{n} \right) = e_p(\tx_{n}) \dt^{p} + {\mathcal O} (\dt^{p+1})=
\varepsilon(\tx_n) + O(\dt^{p+1})\,,  
\end{align} 
when $y_{0}=\tilde{y}_{0}=y(\tx_0)$; hence, $\varepsilon_n \asymp
y(t_n) - y_n$. 
\end{theorem}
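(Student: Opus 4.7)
The plan is to apply Theorem \ref{thm:GlogalErrorExpansion} separately to each of the two methods, observe that the resulting leading-order error equations differ only by the scalar factor $\gamma$ in their forcing terms, and then subtract the two expansions.

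First, I would invoke Theorem \ref{thm:GlogalErrorExpansion} for the method with increment $\Phi$. Under the given smoothness and local error hypothesis \eqref{eq:local:error:ode:two:methods:1}, this yields
\begin{equation*}
y(\tx_n) - y_n = e_p(\tx_n)\dt^p + \mathcal{O}(\dt^{p+1}),
\end{equation*}
where $e_p$ satisfies the linear variational equation \eqref{eq:global:error:ode} with forcing $d_{p+1}(\tx)$ and zero initial condition. Applying the same theorem to $\tilde\Phi$, whose local error hypothesis \eqref{eq:local:error:ode:two:methods:2} has principal term $\gamma\, d_{p+1}(\tx)\dt^{p+1}$, produces
\begin{equation*}
y(\tx_n) - \tilde{y}_n = \tilde{e}_p(\tx_n)\dt^p + \mathcal{O}(\dt^{p+1}),
\end{equation*}
where $\tilde{e}_p$ solves the same linear ODE but with forcing $\gamma\, d_{p+1}(\tx)$, still with $\tilde{e}_p(\tx_0)=0$.

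The key observation is the linearity of the variational equation \eqref{eq:global:error:ode} in the unknown and its forcing. Because both equations share the same homogeneous operator $\partial_y f(\tx,y(\tx))$ and start from zero, the solution depends linearly on the inhomogeneous term, so $\tilde{e}_p(\tx) = \gamma\, e_p(\tx)$ identically on $[\tx_0,\Tx]$. Subtracting the two expansions then gives
\begin{equation*}
\tilde{y}_n - y_n = \bigl(y(\tx_n)-y_n\bigr) - \bigl(y(\tx_n)-\tilde{y}_n\bigr) = (1-\gamma)\, e_p(\tx_n)\dt^p + \mathcal{O}(\dt^{p+1}).
\end{equation*}
Since $\gamma\ne 1$, dividing through by $1-\gamma$ yields exactly \eqref{eq:global:error:two:methods}, and the asymptotic equivalence $\varepsilon_n \asymp y(\tx_n)-y_n$ follows from \eqref{eq:global:error:general}.

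The only delicate point, and the one worth stating carefully, is the step $\tilde{e}_p = \gamma e_p$: it relies on identifying the principal local error term in the B-series expansion for $\tilde\Phi$ as exactly $\gamma$ times that for $\Phi$ (not merely of the same asymptotic order). The hypothesis \eqref{eq:local:error:ode:two:methods} and the formula for $d_{p+1}$ in terms of elementary differentials $F(\Ttree)(y_n)$ (independent by Lemma \ref{lem:ED:Independence}) are precisely what guarantee this scalar proportionality and hence the identical linear forcing structure needed to conclude.
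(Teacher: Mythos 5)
Your proposal is correct and follows essentially the same route as the paper's proof: both invoke the asymptotic global error expansion of Theorem \ref{thm:GlogalErrorExpansion} for each method, use the linearity of the variational equation \eqref{eq:global:error:ode} with forcings $d_{p+1}$ and $\gamma d_{p+1}$ to conclude $\tilde{e}_p = \gamma e_p$, and then subtract the two expansions and divide by $1-\gamma$. Your closing remark on why the proportionality of the principal error terms (rather than mere agreement in asymptotic order) is the essential hypothesis is a point the paper leaves implicit, and is a worthwhile clarification.
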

\begin{proof}
Use \eqref{eq:glb:expansion} and \eqref{eq:global:error:ode} to
write the global error equations for the two methods with nearby
solutions: 
\begin{subequations}
\label{eq:global:error:ode:two:methods}
\begin{align}
\label{eq:global:error:ode:two:methods:1}
 e_{p}'(\tx) &=
 \frac{\partial  f}{\partial y}(\tx,y)\cdot e_{p}(\tx) + d_{p+1}(\tx)\,,
 \quad e_p(\tx_0)=0\,,\\
\label{eq:global:error:ode:two:methods:2}
 \tilde{e}_{p}'(\tx) &=
 \frac{\partial  f}{\partial y}(\tx,y)\cdot \tilde{e}_{p}(\tx) +
 \gamma d_{p+1}(\tx)\,,
 \quad \tilde{e}_p(\tx_0)=0\,.
\end{align}
\end{subequations}
It follows that the solutions of the two ordinary differential equations satisfy $ \gamma e_{p}(\tx)
=\tilde{e}_{p}(\tx)$. We can then verify
\eqref{eq:global:error:two:methods} by inserting
\eqref{eq:global:error}:
\begin{align}
\nonumber
\varepsilon_{p}(\tx_{n})  &=  \frac{1}{1-\gamma} \left(\tilde{y}_{n}  -
y_{n} \right)\\   
\nonumber
&= \frac{1}{1-\gamma} \left( y(\tx_n) - \tilde{e}_p(\tx_n) \dt^p  -
  y(\tx_) + e_p(\tx_n) \dt^p + {\mathcal O} (\dt^{p+1})\right) \\ 
\nonumber
&= e_p(\tx_n) \dt^p + {\mathcal O} (\dt^{p+1}) 
\end{align}
for $n=1,2, \dots$.
\end{proof}

We will refer to $\varepsilon$ as the exact global error and to
$\varepsilon_p$ as the order-$p$ global error estimate. A particular case is $\gamma=0$. Moreover, under the assumptions of Theorem \ref{prop:two:methods}, one can always compute a higher-order approximation by combining the two solutions.
\begin{corollary}
If $\gamma=0$ in Theorem \ref{prop:two:methods}, then we revert to the
case of using two methods of different orders, $p$ and $p+1$, to
estimate the global errors for the method of order $p$.
\end{corollary}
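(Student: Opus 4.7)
The plan is to substitute $\gamma=0$ directly into the hypotheses and conclusion of Theorem \ref{prop:two:methods} and identify the resulting configuration with the classical strategy of comparing methods of consecutive orders.

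First I would examine the local error expansion \eqref{eq:local:error:ode:two:methods:2} under $\gamma=0$. It collapses to
\begin{equation*}
y(\tx + \dt) - y(\tx) - \dt \tilde{\Phi}(\tx,y(\tx),\dt) = {\cal O}(\dt^{p+2})\,,
\end{equation*}
which by Definition \ref{def:Order} (together with the local-error characterization of order in \eqref{eq:local:error:bound}) means that the method associated with $\tilde{\Phi}$ has order at least $p+1$. The first method, with increment $\Phi$, retains its order-$p$ local error as in \eqref{eq:local:error:ode:two:methods:1}. Thus the pair $(\Phi,\tilde{\Phi})$ becomes a pair of methods of orders $p$ and $p+1$, respectively, applied to the same initial condition.

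Next I would insert $\gamma=0$ into the estimator \eqref{eq:global:error:two:methods}, which simplifies the prefactor $1/(1-\gamma)$ to unity, giving
\begin{equation*}
\varepsilon_p(\tx_n) = \tilde{y}_n - y_n \;=\; e_p(\tx_n)\dt^p + {\cal O}(\dt^{p+1})\,.
\end{equation*}
This is precisely the well-known classical device: the difference between the two approximations yields an asymptotically correct estimate of the global error of the lower-order method, since the higher-order solution agrees with the true solution up to terms of order $\dt^{p+1}$. The conclusion of Theorem \ref{prop:two:methods} therefore coincides, term-by-term, with the statement of the corollary.

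There is no real obstacle here beyond bookkeeping; the only thing worth noting is that the hypothesis on $\tilde\Phi$ in Theorem \ref{prop:two:methods} is phrased as an equality to $\gamma d_{p+1}(\tx_n)\dt^{p+1} + {\cal O}(\dt^{p+2})$ rather than as a statement about the order directly, so I would briefly remark that the vanishing of the $\dt^{p+1}$ coefficient is exactly the condition for order $p+1$, and that the estimator then reduces to the standard local-extrapolation difference used by Fehlberg-type schemes.
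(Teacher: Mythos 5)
Your proof is correct and takes the same (implicit) route as the paper, which states this corollary without proof as an immediate consequence of setting $\gamma=0$ in Theorem \ref{prop:two:methods}: your substitution argument supplies exactly the intended bookkeeping, correctly identifying that the vanishing $\dt^{p+1}$ coefficient in \eqref{eq:local:error:ode:two:methods:2} makes $\tilde{\Phi}$ a method of order $p+1$ and that the estimator collapses to $\tilde{y}_n-y_n$. The only quibble is the closing aside: the difference $\tilde{y}_n-y_n$ here estimates the \emph{global} error of the order-$p$ method, since both solutions are propagated from $\tx_0$ rather than restarted each step from a common value, so it is not quite the per-step local-error device of Fehlberg-type embedded pairs; this does not affect the validity of the argument.
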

\begin{corollary}
A method of order $p+1$ can be obtained with conditions of Theorem
\ref{prop:two:methods} by
\begin{align}
\label{eq:high:order:estimate}
\widehat{y}_n = y_n + \varepsilon_p = \frac{1}{1-\gamma} \tilde{y}_n
- \frac{\gamma}{1-\gamma} y_n \,. 
\end{align}
\end{corollary}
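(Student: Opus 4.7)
The plan is to show that $\widehat{y}_n$ inherits from $y_n$ and $\tilde y_n$ a global error whose leading $\dt^{p}$ term cancels identically, so that $y(t_n)-\widehat{y}_n = \mathcal{O}(\dt^{p+1})$; by Definition~\ref{def:Order} this is precisely what it means for $\{\widehat{y}_n\}$ to be a method of order $p+1$. Since the two equivalent expressions in \eqref{eq:high:order:estimate} are related by the elementary identity $y_n + \tfrac{1}{1-\gamma}(\tilde y_n - y_n) = \tfrac{1}{1-\gamma}\tilde y_n - \tfrac{\gamma}{1-\gamma}y_n$, it suffices to work with the form $\widehat{y}_n = y_n + \varepsilon_p(t_n)$.

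First I would collect the asymptotic expansions already available under the hypotheses of Theorem~\ref{prop:two:methods}. From the linearity of \eqref{eq:global:error:ode:two:methods} with zero initial data, $\tilde e_p(t) = \gamma\, e_p(t)$, so Theorem~\ref{thm:GlogalErrorExpansion} yields the twin expansions
\begin{equation*}
y(t_n) - y_n = e_p(t_n)\,\dt^{p} + \mathcal{O}(\dt^{p+1}), \qquad y(t_n) - \tilde y_n = \gamma\, e_p(t_n)\,\dt^{p} + \mathcal{O}(\dt^{p+1}).
\end{equation*}
Subtracting and dividing by $1-\gamma$ reproduces the estimator identity $\varepsilon_p(t_n) = e_p(t_n)\dt^p + \mathcal{O}(\dt^{p+1})$ already established in \eqref{eq:global:error:two:methods}.

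Next I would substitute these expansions into the defining combination and observe the cancellation:
\begin{equation*}
y(t_n) - \widehat{y}_n = \bigl(y(t_n) - y_n\bigr) - \varepsilon_p(t_n) = e_p(t_n)\dt^p - e_p(t_n)\dt^p + \mathcal{O}(\dt^{p+1}) = \mathcal{O}(\dt^{p+1}).
\end{equation*}
Because the bound on the remainder in Theorem~\ref{thm:GlogalErrorExpansion} is uniform on $t_0 < t \le T$ and $0 \le \dt \le \dT$, the combined sequence $\{\widehat{y}_n\}$ satisfies the order-$(p{+}1)$ global error bound on the whole interval.

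The main obstacle, such as it is, is not a calculational one but a bookkeeping one: I must ensure the two $\mathcal{O}(\dt^{p+1})$ remainders are genuinely of the same higher-order type (the shared smoothness assumption on $f$ guarantees this via Theorem~\ref{thm:GlogalErrorExpansion}), and that the amplification factor $1/(1-\gamma)$ in $\varepsilon_p$ does not swamp the remainder, which is why the hypothesis $\gamma \ne 1$ from Theorem~\ref{prop:two:methods} must be carried through. Beyond that point the argument is a one-line Richardson-style extrapolation, and no independent consistency analysis of $\widehat{y}_n$ is needed since it is obtained as a fixed linear combination of two already-consistent time-stepping processes.
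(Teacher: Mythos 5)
Your argument is correct and is essentially the one the paper intends: the corollary is stated without proof as an immediate consequence of \eqref{eq:global:error:two:methods}, and your computation simply spells out the cancellation of the leading $e_p(t_n)\dt^p$ terms using the relation $\tilde{e}_p = \gamma e_p$ established in the proof of Theorem~\ref{prop:two:methods}. The only cosmetic quibble is that the order-$(p{+}1)$ claim is most naturally read as the global-error statement $y(t_n)-\widehat{y}_n = {\cal O}(\dt^{p+1})$ that you actually prove, rather than the local B-series condition of Definition~\ref{def:Order} that you cite.
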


We note that a related analysis has been carried out in
\cite{Shampine_1986b} with an emphasis of reusing standard codes for
solving ODEs with global error estimation. Moreover, Murua and
Makazaga take a similar approach at identifying the global error from
two related numerical solutions \cite{Murua_2000}. 

The result presented above is the basis for the developments in this
study. We introduce a new type of methods that provide {\em a posteriori}
error estimates, and we show that this procedure generalizes all
strategies that compute global errors by propagating multiple
solutions or integrating related problems. The validity of this approach
when variable time steps are used is discussed next.  

%%%%%%%%%%%%%%%%%%%%%%%%%%%%
%
\subsubsection{Global errors with variable time
  steps\label{sec:variable:time:step}}
%
%%%%%%%%%%%%%%%%%%%%%%%%%%%%
Following \cite{Hairer_B2008_I}, for variable time stepping we
consider $\tx_{n+1}-\tx_{n}=\nu(\tx_n) \dt$, $n=1,2,
\dots$ . 

Then the local error expansion \eqref{eq:local:error:expansion} becomes
\begin{align}
%\label{eq:local:error:expansion:VarT}
\nonumber
&y(\tx + \nu(\tx) \dt) - y(\tx) - \nu(\tx)  \dt \,\Phi(\tx,y(\tx),\dt) =\\
\nonumber
& \qquad 
 d_{p+1}(\tx)
 \nu(\tx)^{p+1}  \dt^{p+1} + \cdots  + d_{N+1}(\tx)
 \nu(\tx)^{N+1} \dt^{N+1} + {\mathcal O}(\dt^{N+2}) \,,
\end{align} 
and instead of \eqref{eq:local:error:PhiHat} we obtain
\begin{align}
%\label{eq:local:error:PhiHat:VarT}
\nonumber
&\widehat{\Phi}(\tx,\widehat{y}_{n} ,\nu(\tx)\dt)  =
\Phi(\tx,\widehat{y}_{n} -e_{p}(\tx) \dt^{p},\nu(\tx)\dt) + (e_{p}(\tx+\nu(\tx)\dt)
- e_{p}(\tx)) \frac{\dt^{p}}{\nu(\tx) \dt}\,.
\end{align}
Then \eqref{eq:Local:ErrorEquation} becomes
\begin{align}
\nonumber
%\label{eq:Local:ErrorEquation:VarT}
&y(\tx +\nu(\tx) \dt) - y(\tx) - \nu(\tx)\dt \widehat{\Phi}(\tx,y(\tx),\nu(\tx)\dt)
 \\
\nonumber 
&
\qquad= \nu(\tx)\left( d_{p+1}(\tx) \nu(\tx)^p
 +\frac{\partial 
    f}{\partial y}(\tx,y)e_{p}(\tx)  - e_{p}'(\tx) \right)\dt^{p+1}
+{\mathcal O}(\dt^{p+2})  \,.
\end{align}
Instead of \eqref{eq:global:error:ode}, the global error $e_{p}(\tx)$ satisfies the following
equation:
\begin{align}
\label{eq:global:error:ode:VarT}
 e_{p}'(\tx) =
 \frac{\partial  f}{\partial y}(\tx,y)\cdot e_{p}(\tx) +  \nu(\tx)^p d_{p+1}(\tx)\,,
 \quad e_p(0)=0\,.
\end{align} 
The results introduced in this study and summarized by Theorem
\ref{prop:two:methods} carry over to variable time stepping with $\dt$
replaced by $\dt_{\rm max}=\max(\nu(\tx)\dt)$ and, therefore, allow
the application of such strategies in practical contexts.  

In this study we do not address the problem of time-step adaptivity based on global error estimates. In practice, the adaptivity can be based on asymptotically correct local error estimates that are provided directly by the methods proposed here.
%%%%%%%%%%%%%%%%%%%%%%%%%%%%%%%%%%%%%%%%
%
\subsubsection{Methods satisfying the exact principal error equation} 
%
%%%%%%%%%%%%%%%%%%%%%%%%%%%%%%%%%%%%%%%%
We next review a class of methods used for global error estimation. Consider an asymptotic error expansion in \eqref{eq:global:error:ode} of 
%
%\begin{subequations}
\begin{align}
\label{eq:global:error:ode:expansion}
 e(\tx) = \sum_{\Ttree \in  \TreeSet_p} \alpha(\Ttree) a(\Ttree)
 F(\Ttree)(y(\tx))\,, ~\tx>\tx_0\,, ~\textnormal{and} ~  e(\tx_0) = \sum_{\Ttree \in
   \TreeSet_p} \alpha(\Ttree) a(\Ttree)  F(\Ttree)(y(\tx_0)) \,,
\end{align} 
for some constant $a(\Ttree)$. By inserting
\eqref{eq:global:error:ode:expansion} in \eqref{eq:global:error:ode}
we obtain   
\begin{align}
\nonumber
d_{p+1}(\tx) =& \frac{d}{d \tx} \left[ \sum_{\Ttree \in  \TreeSet_p} \alpha(\Ttree)
  \mathbf{e}(\Ttree) F(\Ttree)(y(\tx)) \right] -
 \frac{\partial  f}{\partial y}(y(\tx)) \cdot  \sum_{\Ttree \in T_p}
 \alpha(\Ttree) \mathbf{e}(\Ttree) F(\Ttree)(y(\tx)) \\
\label{eq:global-local:error:ode:expansion}
       =& \sum_{\Ttree \in  \TreeSet_p} \alpha(\Ttree) \mathbf{e}(\Ttree) \left[
         \frac{d}{d \tx}  F(\Ttree)(y(\tx)) - \frac{\partial
           f}{\partial y}(y(\tx))   F(\Ttree)(y(\tx))
         \right] \,.
\end{align} 
%\end{subequations}
%
This expression implies that if the local error satisfies
\eqref{eq:global-local:error:ode:expansion}, then
\eqref{eq:global:error:ode:expansion} is the exact solution of
\eqref{eq:global:error:ode}, and therefore the global errors can be
estimated directly, as described below.  

This strategy was indirectly introduced by Butcher
\cite{Butcher_P1969} in an attempt to break the order barriers of
multistage methods under the alias ``effective order.'' Stetter
\cite{Stetter_1971} observed the relationship between
\eqref{eq:global-local:error:ode:expansion} and the global
error \eqref{eq:global:error:ode}. This
strategy requires a starting procedure $\GLMS$ to enforce $e(\tx_0)$,
a method $\GLMM$ that satisfies
\eqref{eq:global-local:error:ode:expansion}, and a finalizing procedure
$\GLMF$ to extract the global error. We denote by
$\GLMS(\circ)$,  $\GLMM(\circ)$,  $\GLMF(\circ)$ the application of
each method on solution $\circ$. Stetter \cite{Stetter_1971} found that $\GLMS$
  and $\GLMF$ can be one order less than  $\GLMM$. Examples of such triplets can
be found in many studies
\cite{Butcher_P1969,Stetter_1971,Scholz_1989,Richert_1987,Merluzzi_1978,Prince_1978,Rive_1975}.%

\begin{boxedminipage}{0.9\textwidth}
Algorithm [A:ExPrErEq]: Methods with exact principal error equation \cite{Stetter_1971} \\Solve
\begin{subequations}
\label{eq:ExactPrincipalError}
\begin{align}
&y_1=\GLMS(y_0)\,, & y(\tx_0)=y_0\\
&\left\{
\begin{array}{l}
y_n=\GLMM(y_{n-1})\\
\varepsilon_n=y_n-\GLMF(y_{n-1})
\end{array}
\right. ~~ n=2,\,3,\,\dots\,, & \textnormal{so~that}~\eqref{eq:global-local:error:ode:expansion}.
\end{align} 
\end{subequations}  
\end{boxedminipage}

One such scheme is provided in Appendix \ref{sec:ExactPrincipalError:Prince_1978}.
%The Mathematica notebooks are available under {\em
%  TimeNotes/ErrorAnalysis}.
However, a caveat is that methods based on explicit Runge-Kutta schemes
require as many nonzero stage coefficients as the order of the method
because $\GLMM$ needs to have a nonzero tall tree of $p+1$, hence, the
effective order is limited by $p \le s$. For instance, an order 5 method requires
at least five stages. This requirement comes from the fact that tall
trees need to be nonzero in
\eqref{eq:global-local:error:ode:expansion}. However, this strategy is
still effective for high orders. Recently the effective order was discussed in
\cite{Butcher_1999,Butcher_1998a,Butcher_1998b,Butcher_1998c,Butcher_1997c,Hadjimichael_2013}. Effective order through method composition has recently been discussed in \cite{Chang_2008}. 

Although this concept is attractive in terms of efficiency, Prince and Wright
\cite{Prince_1978} noted a severe problem with using it for
global error estimation: If the system has unstable components, then
the error approximation becomes unreliable, as can be seen in
Fig. \ref{fig:nr:conv:principal:error} discussed later. This is a
severe limitation because having unstable components makes the local
error estimates unreliable, and this is precisely the case when one
would need to use global error estimation.

%%%%%%%%%%%%%%%%%%%%%%%%%%%%
%
\subsection{Differential correction\label{sec:differential:correction}}
%
%%%%%%%%%%%%%%%%%%%%%%%%%%%%

The differential correction techniques for global error estimation are based on the work of
Zadunaisky \cite{Zadunaisky_1976} and Skeel \cite{Skeel_1976}. The discussion of these procedures is
deferred to Sec. \ref{sec:solving:for:correction}.

%%%%%%%%%%%%%%%%%%%%%%%%%%%%
%
\subsubsection{Error equation and the defect}
%
%%%%%%%%%%%%%%%%%%%%%%%%%%%%

We follow the exposition in \cite{Lang_2007,Zadunaisky_1976} and assume that there exists
a solution $z(\tx)$ of a perturbed system 
\begin{align}
&z(\tx)'=f(\tx,z(\tx))-r(\tx)\,;~ z(\tx_0)=z_0\,,~ r(0) = y_0-z_0 \,,~
  \tx_0<\tx \le \Tx \label{eq:DC:ODE:pert}\,,
\end{align}
close to $y(\tx)$. The error function (between the solutions of \eqref{eq:ODE} and \eqref{eq:DC:ODE:pert}) is given by \cite{Lang_2007}
\begin{align}
e(\tx) &= y(\tx) - z(\tx)\,, \label{eq:DC:ErrorEQ}\\
e'(\tx) &= A(\tx)  e(\tx) - r(\tx)\,,\qquad A= \int_{0}^{1}
f'(\tx,y(\tx)  + s e(\tx) ) \: ds\,. 
 \label{eq:DC:ODE:ErrorEQ} 
\end{align}

If $e(\tx_0)=0$ and we approximate $A(\tx) = \frac{\partial f}{\partial
  \tx}(\tx,y) + {\mathcal O}(e(\tx))$ in \eqref{eq:DC:ODE:ErrorEQ}, then we
obtain 
\begin{align}
 e' (\tx)&=   \frac{\partial f}{\partial
  \tx}(\tx,y)  e(\tx) - r(\tx) \,, ~ e(\tx_0)=0 \,,~\tx_0<\tx \le
  \Tx
 \label{eq:DC:ODE:ErrorEQ:2} \,,
\end{align}
with $r(\tx)=-d_{p+1}(t) \dt^p$. This is asymptotically
equivalent to solving the first variation (leading term) of the global
error equation for $e_p$; i.e., \eqref{eq:global:error:ode}. Consider now that the
nearby solution, $z(\tx)$, is obtained through 
an interpolatory function $P(\tx)$, and define the defect $D(\tx)$ as
\begin{align}
\label{eq:DC:Defect:Function}
D(\tx)=f(\tx,P(\tx)) - P'(\tx)\,.
\end{align}
Estimates of the local truncation errors can be obtained by using continuous
output \cite{Enright_2000}. Lang and Verwer \cite{Lang_2007} showed
that if $P(\tx)$ is obtained through Hermite interpolation, then 
\begin{align}
%\label{eq:DC:Defect:Function}
\nonumber
D(\tx)= [y'(\tx) - f(\tx,y(\tx))] - [f(\tx,P(\tx)) - P'(\tx)] = {\mathcal
  O}(\dt^3)\,,~~ \tx \in (\tx_{n},\tx_{n+1}) \,,
\end{align}
and in particular $D(\tx_n+\frac{\dt}{2}) = {\mathcal
  O}(\dt^4)$. Furthermore, a relation between the defect at
$\tx_n+\frac{\dt}{2}$
and the leading term of the local truncation error,
$D(\tx_{n+\frac{1}{2}}) = \frac{3}{2} d_{p+1}(\tx_n)\dt + {\mathcal
  O}(\dt^{p+1})\,, ~~ 1 \le p \le 3$, can be obtained.
%\end{align}
%
We can then set $r(\tx) = \frac{2}{3} D(\tx_{n+\frac{1}{2}})$, $\tx
\in (\tx_{n},\tx_{n+1})$, and \eqref{eq:global:error:ode} and
\eqref{eq:DC:ODE:ErrorEQ:2} become
\begin{align}
 e' (\tx)&= f'(\tx_n,y_n)  e(\tx) - r(\tx_{n+\frac{1}{2}}) \,, ~ e(\tx_0)=0
 \,,~\tx_n < \tx \le \tx_{n+1}\,,~~ n=0,1,\dots,N\,.
 \label{eq:DC:ODE:ErrorEQ:LangVerwer} 
\end{align}
%

%%%%%%%%%%%%%%%%%%%%%%%%%%%%
%
\subsubsection{Solving the error equation}
%
%%%%%%%%%%%%%%%%%%%%%%%%%%%%
If the Jacobian of $f$ is available, then
\eqref{eq:DC:ODE:ErrorEQ:LangVerwer}  can be solved directly as in
\cite{Lang_2007}. 

\begin{boxedminipage}{0.9\textwidth}
%
%\begin{algorithm}[H]
%\caption{\label{alg:GE:Lang}
Algorithm [A:SoErEq]: Solving the error equation \cite{Lang_2007} \\Solve
%}
%
\begin{subequations}
\label{eq:GE:LangVerwer}
\begin{align}
\label{eq:GE:LangVerwer:ODE}
y' &= f(t,y)\,, & y(\tx_0)=y_0\\
\label{eq:GE:LangVerwer:ERROR}
\varepsilon' (\tx)&= J \varepsilon(\tx) + [d_{p+1}(\tx_n) \dt]  \,, &  \varepsilon(\tx_0)=0\\
\nonumber
d_{p+1}(\tx_n) &= y(\tx_{n+1}) - y_{n+1} + {\mathcal O}(\dt^{p+2})\,,
~ J=\frac{\partial f}{\partial y}\,.
\end{align}
\end{subequations}
%\end{algorithm}
%
\end{boxedminipage}

The authors of \cite{Lang_2007} argue that
\eqref{eq:GE:LangVerwer:ERROR} can be solved with a cheaper, lower-order
method. In this case, however, the bulk of the work resides on
determining $d_{p+1}$, which can be estimated by following the steps
discussed in the previous section. 

%%%%%%%%%%%%%%%%%%%%%%%%%%%%
%
\subsubsection{Solving for the correction\label{sec:solving:for:correction}}
%
%%%%%%%%%%%%%%%%%%%%%%%%%%%% 

This  approach follows the developments presented in
\cite{Zadunaisky_1976,Skeel_1986,Peterson_B1986} and further refined
in \cite{Dormand_1984,Dormand_1985,Dormand_1989,Dormand_1994}. We start from \eqref{eq:DC:ODE:pert} and denote by $P(\tx)$ its exact
solution. Equation \eqref{eq:DC:ErrorEQ} becomes
\begin{subequations}
\begin{align}
e(\tx)  &= y(\tx) -  P(\tx) \,, ~\textnormal{and}\label{eq:SC:ErrorEQ}\\
 e'(\tx)&=(y(\tx)-P(\tx))' = f(\tx,y(\tx)) - P(\tx)'= f(\tx,P(\tx)+e(\tx))  - P'(\tx) 
 \label{eq:DC:ODE:ErrorEQ:2:bis} \,.
\end{align}
\end{subequations}
We can see the connection between
\eqref{eq:DC:ODE:ErrorEQ:2:bis} and \eqref{eq:global:error:ode} by starting with \eqref{eq:global:error:ode}: 
\begin{align}
e' (\tx) &= f'(\tx,y(\tx)) e(\tx) + D(\tx) =\nonumber
f(\tx,y(\tx)) - f(\tx,y(\tx) - e(\tx))  + f(\tx,P(\tx)) - P'(\tx) +
{\mathcal O}(e^2) \\
&\approx f(\tx,y(\tx)) - P'(\tx) = f(\tx,P(\tx)+e(\tx)) - P'(\tx) \,, \nonumber
\end{align}
where we neglected the higher-order terms and used
\eqref{eq:DC:Defect:Function} and \eqref{eq:SC:ErrorEQ}. 
The equations to be solved are known as the procedure for solving for
the correction \cite{Skeel_1986}.

\begin{boxedminipage}{0.9\textwidth}
%
%\begin{algorithm}[H]
%\caption{\label{alg:GE:Lang}
Algorithm [A:SolCor]: Solving for the correction\cite{Skeel_1986} \\Solve
\begin{subequations}
\label{eq:Solving:Correction:ODE}
\begin{align}
\label{eq:Solving:Correction:ODE:y}
y' &= f(t,y)\,, &  y(\tx_0)=y_0 \\
\label{eq:Solving:Correction:ODE:z}
\varepsilon' &=f(\tx,P(\tx) + \varepsilon) - P'(\tx) \,, &
\varepsilon(\tx_0)= 0\\
\nonumber
P(\tx) &\approx y(\tx) - y_{\dt}(\tx)\,. 
\end{align}
\end{subequations}
%
%\end{algorithm}
%
\end{boxedminipage}

We will show that equations \eqref{eq:Solving:Correction:ODE} (in
[A:SolCor]) can be solved by using a
general linear method representation \eqref{eq:Solving:Correction:GLM}
described in Sec. \ref{sec:Zadunaisky}.

The related 
Zadunaisky procedure
\cite{Zadunaisky_1976} is as follows. Calculate the polynomial of
order $p$,  $P(\tx)$, by using Lagrange interpolation ${\mathcal
  L}_p(y_{\dt}(\tx))$ over several steps and then apply a similar
procedure as in \eqref{eq:Solving:Correction:ODE} on a perturbed system.

\begin{boxedminipage}{0.9\textwidth}
%
%\begin{algorithm}[H]
%\caption{\label{alg:GE:Lang}
Algorithm [A:ZaPr]: Zadunaisky procedure\cite{Zadunaisky_1976} \\Solve
\begin{subequations}
\label{eq:Zadunaisky:ODE}
\begin{align}
\label{eq:Zadunaisky:ODE:y}
y' &= f(t,y)\,, &  y(\tx_0)=y_0 \\
\label{eq:Zadunaisky:ODE:z}
z' &=f(\tx,P(\tx)) - P'(\tx) - f(\tx,z) \,, &
z(\tx_0)= y_0\\
\nonumber
\varepsilon_n &= z_n-y_n & P(\tx) = {\mathcal L}_p(y_{\dt}(\tx))
\,. 
\end{align}
\end{subequations}
%
%\end{algorithm}
%
\end{boxedminipage}
%

%%%%%%%%%%%%%%%%%%%%%%%%%%%%
%
\subsection{Extrapolation approach\label{sec:extrapolation}}
%
%%%%%%%%%%%%%%%%%%%%%%%%%%%% 

The global error estimation through extrapolation dates back to
\cite{Richardson_1927}. The
procedure is the following. Propagate two 
solutions $y_{\dt,n}$ and $y_{\frac{\dt}{2},n}$, one with $\dt$ and one with $\dt/2$, each with global errors
$\varepsilon_{\dt,n} = y(t_n) - y_{\dt,n} $,
$\varepsilon_{\frac{\dt}{2},n} = y(t_n) - y_{\frac{\dt}{2},n} $,
respectively. 

Then it follows that  by
using a method of order $p$ one obtains \cite{Henrici_1962}
\begin{subequations}
\label{eq:glabal:error:extrapolation}
\begin{align}
\varepsilon_{\dt,n} &= y(t_n) - y_{\dt,n}  = e_p \dt^p + {\mathcal
  O}(\dt^{p+1}) \,,
\nonumber
\\ 
\varepsilon_{\frac{\dt}{2},n} &=  y(t_n) - y_{\frac{\dt}{2},n}  = e_p
\left(\frac{\dt}{2}\right)^p + {\mathcal O}(\dt^{p+1}) \,.
\nonumber
\end{align}
The global error and a solution of one order higher can be obtained as 
\begin{align}
\varepsilon_{\dt,n}&= \frac{2^p}{1-2^p} (y_{\dt,n} - y_{\frac{\dt}{2},n}) + {\mathcal
  O}(\dt^{p+1}) \label{eq:glabal:error:extrapoation:err} \,,\\
\widehat{y}_{\dt,n} &= y_{\dt,n} + \varepsilon_{\dt,n} =
\frac{1}{1-2^p} y_{\dt,n} -  \frac{2^p}{1-2^p} y_{\frac{\dt}{2},n}
= y(\tx_n) + {\mathcal O}(\dt^{p+1}) 
\label{eq:glabal:error:extrapoation:higher:order}
\,.
\end{align}
\end{subequations}
These statements are a particular instantiation of
\eqref{eq:global:error:two:methods} and \eqref{eq:high:order:estimate}
with $\gamma=1/2^p$.

\noindent\begin{boxedminipage}{0.99\textwidth}
%
%\begin{algorithm}[H]
%\caption{\label{alg:GE:Lang}
Algorithm [A:Ex]: Extrapolation \\Solve $y' = f(t,y)$  by using a method of order $p$ with two time
steps $\dt$ and $\dt/2$
%}
%
\begin{subequations}
\label{eq:GE:Extrap}
\begin{align}
\label{eq:GE:Extrap:ODE}
y' &= f(t,y) \Rightarrow y_{\dt,n}\,,~ y_{\frac{\dt}{2},n} \,, & y(\tx_0)=y_0\\
\label{eq:GE:Extrap:ERROR}
\varepsilon&=\frac{2^p}{1-2^p} (y_{\dt,n} - y_{\frac{\dt}{2},n})\,.
\end{align}
\end{subequations}
%\end{algorithm}
%
%
\end{boxedminipage}

%%%%%%%%%%%%%%%%%%%%%%%%%%%%
%
\subsection{Underlying higher-order method\label{sec:Under:High:Order}}
%
%%%%%%%%%%%%%%%%%%%%%%%%%%%% 

All the methods described in this study attempt to use an
  underlying higher-order method to estimate the global error. In the
case of [A:ExPrErEq] the exact principal error algorithm
\eqref{eq:ExactPrincipalError} and of [A:SoErEq] solving the error equation
\eqref{eq:GE:LangVerwer}, we find that the actual equation being solved
is modified to include the truncation error 
term. By adding \eqref{eq:GE:LangVerwer:ODE} and
\eqref{eq:GE:LangVerwer:ERROR} one obtains
\begin{align*}
y' + \varepsilon' = \widehat{y}' &= f(y) + J \varepsilon + D(y)\\
\widehat{y}' &= f(\widehat{y}-\varepsilon) + J \varepsilon + D(y)\\
\widehat{y}'&= f(\widehat{y}) + D(y)\,.
\end{align*}

In the case of the Zadunaisky algorithm [A:SolCor] \eqref{eq:Solving:Correction:ODE},
one can recover the underlying higher-order method by replacing the
error term in \eqref{eq:Solving:Correction:ODE:z} with 
$\widehat{y}$ from \eqref{eq:high:order:estimate}  and using  the
conditions imposed on $P$ (see \cite{Dormand_1984}). We show an
example  in Sec. \ref{sec:Zadunaisky}. The extrapolation algorithm [A:Ex]
\eqref{eq:GE:Extrap} reveals the higher-order estimate directly in
\eqref{eq:glabal:error:extrapoation:higher:order}.  

%%%%%%%%%%%%%%%%%%%%%%%%%%%%%%%%%%%%%%%%
%
\section{General linear methods\label{sec:GLMs}}
%
%%%%%%%%%%%%%%%%%%%%%%%%%%%%%%%%%%%%%%%%

The methods introduced in this study are represented by GL
schemes. In this context, we take advantage of the existing theory
underlying GL methods and augment it with global error estimation
capabilities. Two key elements are required. The first is a
consequence of Theorem \ref{prop:two:methods}, which imposes a
restriction on the truncation error. The second has to do with restricting the
interaction between the two outputs. Both will be addressed in Sec. 
\ref{sec:Methods:GEM}. The advantage of representing
existing global error strategies in a more general framework is that
it allows for the development of more robust methods. For this we need
GL methods that carry at least two quantities as discussed above. In
this section we present the GL methods theory without built-in error
estimates. 

General linear methods were introduced by Burrage and Butcher 
\cite{Burrage_1980}; however, many GL-type schemes have been proposed
to extend either Runge-Kutta methods \cite{Gragg_1964} to linear
multistep (LM) or vice versa \cite{Butcher_1965,Gear_1965}, as well as
other extensions
\cite{Butcher_1966,Cooper_1978,Hairer_1973,Skeel_1976} or directly as
peer methods \cite{Schmitt_2004,Podhaisky_2005}. GL methods are 
 thus a generalization of both RK and LM methods, and we use the GL
formalism to introduce new methods that provide asymptotically correct
global error estimates.

Denote the solution at the current step ($n-1$) by an $r$-component vector 
$\mathbbm{y}^{[n-1]}= [\mathbbm{y}_{(1)}^{[n-1]} \, \mathbbm{y}_{(2)}^{[n-1]}\dots 
\mathbbm{y}_{(r)}^{[n-1]}]^T$, which contains the available information in the form 
of numerical approximations to the ODE (\ref{eq:ODE}) solutions and their derivatives 
at different time indexes. To increase clarity, we henceforth denote
the time index inside square brackets. The stage values (at step $n$) are denoted by 
${\bf Y}_{(i)}$ and stage derivatives by $\mathbf{f}_{(i)}=f\left(\mathbf{Y}_{(i)}\right)$, 
$i=1,\,2,\,\dots,\,s$, and can be compactly represented as
$\mathbf{Y} = \left[\mathbf{Y}_{(1)}^T\, \mathbf{Y}_{(2)}^T\dots
  \mathbf{Y}_{(s)}^T\right]^T$ and  $\mathbf{f} = \left[\mathbf{f}_{(1)}^T
  \, \mathbf{f}_{(2)}^T \dots \mathbf{f}_{(s)}^T\right]^T$.  

The $r$-value $s$-stage GL method is described by
%
%\begin{align}
\begin{eqnarray}
\label{eq:GLM}
\begin{array}{rl}
\displaystyle \mathbf{Y}_{(i)} = & \displaystyle \dt \sum_{j=1}^{s}\GLMA_{ij}
 \mathbf{f}_{(j)} + \sum_{j=1}^{r} \GLMU_{ij} \mathbbm{y}_{(j)}^{[n-1]} 
\,,~ i=1,\,2,\,\dots,\,s\,, \\
\displaystyle \mathbbm{y}_{(i)}^{[n]}=& \displaystyle
 \dt \sum_{j=1}^{s}\GLMB_{ij}
 \mathbf{f}_{(j)} + \sum_{j=1}^{r} \GLMV_{ij} \mathbbm{y}_{(j)}^{[n-1]}  \,,~
i=1,\,2,\,\dots,\,r\,, 
\end{array}
\end{eqnarray}
%\end{align}
%
where ($\GLMA, \GLMU, \GLMB, \GLMV$) are the 
coefficients that define each method and can be grouped further into
the GL matrix $\GLMM$:  
%
%\begin{align}
\begin{eqnarray}
\nonumber
\left[
\begin{array}{c}
\mathbf{Y}\\
\mathbbm{y}^{[n]}
\end{array}
\right]
=
\left[
\begin{array}{cc}
\GLMA \otimes I_m&\GLMU\otimes I_m\\
\GLMB\otimes I_m&\GLMV\otimes I_m
\end{array}
\right]
\left[
\begin{array}{c}
\dt \mathbf{f}\\
\mathbbm{y}^{[n-1]}
\end{array}
\right]
=
\GLMM
\left[
\begin{array}{c}
\dt\mathbf{f}\\
\mathbbm{y}^{[n-1]}
\end{array}
\right]\,.
\end{eqnarray}
%\end{align}
%
Expression (\ref{eq:GLM}) is the most generic representation of GL
methods \cite[p. 434]{Hairer_B2008_I} and encompasses both RK methods %Hairer_B2008_I
($r=1$, $s>1$) and LM methods ($r>1$, $s=1$) as particular cases. In
this work we consider methods with $r=2$, where the first component
represents the primary solution of the problem
\eqref{eq:local:error:ode:two:methods:1} and the second component can
represent either the defect \eqref{eq:global:error:two:methods} or the
secondary component \eqref{eq:local:error:ode:two:methods:2}. Only
multistage-like methods are considered; however, multistep-multistage
methods ($r>2$) are also possible. 

If method (\ref{eq:GLM}) is consistent (there exist vectors $q_0$,
$q_1$ such that $\GLMV q_0=q_0$, $\GLMU q_0=\mathbbm{1}$, and
$\GLMB \mathbbm{1}+\GLMV q_1=q_0+q_1$ \cite[Def. 3.2 and
3.3]{Butcher_2006}) and stable ($\Vert \GLMV^n \Vert$ remains
bounded, $\forall n = 1,2,\dots$ \cite[Def. 3.1]{Butcher_2006}), then the
method (\ref{eq:GLM}) is convergent \cite[Thm. 3.5]{Butcher_2006},
\cite{Butcher_B2008,Jackiewicz_B2009}.  In-depth descriptions and survey materials on GL methods can be found in
\cite{Butcher_2001a,Butcher_2006,Butcher_B2008,Hairer_B2008_I,Jackiewicz_B2009}. 
In this study we use self-starting methods that do not require a
solution history and specialized starting procedures. In general the initial input vector $\mathbbm{y}^{[0]}$ can
be generated through a starting procedure, $\GLMS=\left\{S_i:
\mathbb{R}^m \rightarrow \mathbb{R}^m \right\}_{i=1 \dots r} $,
represented by generalized RK methods; see
\cite[Chap. 53]{Butcher_B2008} and \cite{Constantinescu_A2010b}. The
final solution is typically obtained by applying a ``finishing
procedure,'' $\GLMF=\left\{F_i:
\mathbb{R}^m \rightarrow \mathbb{R}^m \right\}_{i=1 \dots r} $, to the
last output vector, in our case this is also trivial. We denote
by the GL process the GL method applied $n$ times and described by
$\GLMS\GLMM^n \GLMF$; that is, $\GLMM$ is applied $n$ times on the
vector provided by $\GLMS$, and then $\GLMF$ is used to 
extract the final solution.

%%%
%%%%%%%%%%%%%%%%%%%%%%%%%
\subsection{Order conditions for GL methods\label{sec:GL:order}}
%%%%%%%%%%%%%%%%%%%%%%%%%
%%%
The order conditions rely on the theory outlined by Butcher et al. 
\cite{Butcher_B2008,Constantinescu_A2009b,Constantinescu_A2010b}. The
derivatives of the numerical and exact 
solution are represented by rooted trees and expressed as a B-series
\cite{Butcher_1972,Hairer_1974} as delineated in Theorem
\ref{thm:B-Series} and order definition \ref{def:Order}. We use an
algebraic criterion to characterize the order conditions for GL methods as follows. 
Let $\Ttree \in \TreeSet$ and $E^{(\theta)}:\TreeSet \rightarrow \mathbb{R}$,
the ``exact solution operator'' of differential equation
(\ref{eq:ODE}), which represents the \emph{elementary weights for the
  exact solution} at $\theta \dt$. If 
$\theta=1$, then
$E^{(1)}(\Ttree)=E(\Ttree)=(\sigma(\Ttree)
\alpha(\Ttree))/\TreeOrder(\Ttree)!$ and
$E\left(\left\{\BTreeIA,\BTreeIIA,\BTreeIIIA,\BTreeIVA\right\}\right)=\{1,1/2,1/3,1/6\}$
for $\TreeOrder(\Ttree) \le 3$. The order can be analyzed
algebraically by introducing a  
mapping $\xi_i:\TreeSet \rightarrow \mathbb{R}$, $\xi_i(\emptyset)=1$
in our case,
$\xi_i(\Ttree)=\Phi^{(i)}(\Ttree)$, where $\Phi^{(i)}(\Ttree)$,
$i=1,\dots, r$, results from the starting procedure and $\emptyset$
represents the ``empty tree.'' Then for the general linear method
$(\GLMA,\GLMU,\GLMB,\GLMV)$, one has 
\begin{align}
\label{eq:GLM:error}
\eta(\Ttree)=\GLMA \eta D(\Ttree) + \GLMU \xi(\Ttree)\,, \qquad
\widehat{\xi}(\Ttree) = \GLMB \eta D(\Ttree) + \GLMV \xi(\Ttree)\,,
~~\Ttree \in   \TreeSet\,,
\end{align}
where $\eta$, $\eta D$ are mappings from $\TreeSet$ to scalars that
correspond to the internal stages and stage derivatives and
$\widehat{\xi}$ represents the output vector. The exact weights are
obtained from $[E\xi](\Ttree)$. The order of the GL method can be
determined by a direct comparison between $\widehat{\xi}(\Ttree)$ and
$[E\xi](\Ttree)$. More details can be found in \cite{Butcher_B2008},
where a criterion for order $p$ is given for a GL method described by
$\GLMM$ and $\GLMS$.  Therefore, in general, an order $p$ GL method results from the direct comparison of elementary
wights of $[\GLMM^n](\Ttree_j)=[E^n\xi](\Ttree_j)$ $\forall
\Ttree_j,\, \TreeOrder(\Ttree_j) \le p$. This criterion is a direct consequence of
\cite[Def. 3 and Prop. 1]{Constantinescu_A2009b}. In our particular
case, methods satisfying Theorem \ref{prop:two:methods} can be
developed by enforcing  \eqref{eq:local:error:ode:two:methods} on the
corresponding solution vector. We further elaborate the order
conditions in our particular case in Sec. \ref{sec:Order:GEE}.

%%%%%%%%%%%%%%%%%%%%%%%%%
%%%
\subsection{\label{sec:LS}Linear stability of GL methods}
%%%
%%%%%%%%%%%%%%%%%%%%%%%%%
The linear stability analysis of method (\ref{eq:GLM}) is performed on
a linear scalar test problem: $y'(t) = a y(t)$, $a
\in\mathbb{C}$. Applying (\ref{eq:GLM}) to the
test problem yields a solution of form $y^{[n+1]}= R(z)\,y^{[n]}$, 
%
%\begin{align}
\begin{align}
\label{eq:GL:StabMatrix}
R(z)&=\GLMV+z \GLMB \left(I_s-z \GLMA \right)^{-1}
\GLMU\,,\\
\label{eq:GL:StabFunction}
\Phi(w,z) &= \det(w I_r - R(z))\,,
\end{align}
%\end{align}
%
where $z=a \dt$, $R(z)$ is referred to as the stability matrix of
the scheme, and $\Phi(w,z)$ is the stability function.

For given $z$, method (\ref{eq:GLM}) is linearly stable if the
spectral radius of $R(z)$ is contained by the complex unit disk. The
stability region is defined as the set ${\mathcal S}=\left\{z \in
  \mathbb{C}: |R(z)| \le 1 \right\}$. The linear stability region
provides valuable insight into the method's behavior with nonlinear
systems. Additional details can be found
in \cite{Butcher_B2008}.  

%%%%%%%%%%%%%%%%%%%%%%%%%%%%%%%%%%%%%%%%%%
%
\section{Methods with global error estimation (GEE)\label{sec:Methods:GEM}}
%
%%%%%%%%%%%%%%%%%%%%%%%%%%%%%%%%%%%%%%%%%%
We now introduce GL methods with global and local error estimation. We
focus on Runge-Kutta-like schemes in the sense that the resulting GL
methods are self-starting multistage schemes. We therefore restrict our exposition
to methods that carry two solutions explicitly and where $r=2$. Generalizations
are possible but not addressed here. The methods are given
in two forms that use different input and output quantities. The first
form used for numerical analysis results in a scheme denoted by
$\GLyy$ that evolves two solutions of the ODE problem $y$ and
$\tilde{y}$. Methods $\GLyy$ take the following form:
%
%
%\begin{align}
\begin{eqnarray}
\label{eq:GLM:Global}
\begin{array}{rl}
\displaystyle Y_{(i)} = &\displaystyle \dt \sum_{j=1}^{s}\GLMA_{ij}
 f(Y_{(j)}) +  \GLMU_{i,1} y_{(1)}^{[n-1]}  +  \GLMU_{i,2} y_{(2)}^{[n-1]} 
\,,~ i=1,\,2,\,\dots,\,s\,, \\
\displaystyle y_{(1)}^{[n]}=& \displaystyle
\dt  \sum_{j=1}^{s} \GLMB_{1,j}
 f(Y_{(j)}) +  \GLMV_{1,1} y_{(1)}^{[n-1]}  +  \GLMV_{1,2} y_{(2)}^{[n-1]} \,,~\\
\displaystyle y_{(2)}^{[n]}=& \displaystyle
\dt \sum_{j=1}^{s} \GLMB_{2,j}
 f(Y_{(j)}) +  \GLMV_{2,1} y_{(1)}^{[n-1]}  +  \GLMV_{2,2} y_{(2)}^{[n-1]}   \,.~
\end{array}
\end{eqnarray}
%\end{align}
%
%
We will consider $V=I_r$, although more general forms can also be considered. The 
second form, denoted by $\GLye$, is given as a method that evolves the
solution of the base method and the error explicitly,
$y$ and $\varepsilon$, as $\{y^{[n]},\varepsilon^{[n]}\} =
\GLye(\{y^{[n-1]},\varepsilon^{[n-1]}\})$, and has a more practical 
flavor. Both forms can be expressed
as GL methods with tableaux $(\GLMA_{y\tilde{y}},\GLMU_{y\tilde{y}},\GLMB_{y\tilde{y}},\GLMV_{y\tilde{y}})$ and
$(\GLMA_{y\varepsilon},\GLMU_{y\varepsilon},\GLMB_{y\varepsilon},\GLMV_{y\varepsilon})$,
respectively; and one can switch between the forms as explained below. 

\begin{lemma}\label{lem:GLMyy:GLMye}
GL methods of form \eqref{eq:GLM:Global} that satisfy the conditions
of Theorem \ref{prop:two:methods} with coefficients
$(\GLMA_{y\tilde{y}}, \allowbreak\GLMU_{y\tilde{y}}, \allowbreak\GLMB_{y\tilde{y}},\allowbreak \GLMV_{y\tilde{y}})$,
where $\mathbbm{y}^{[n]}=[(y^{[n]})^T,(\tilde{y}^{[n]})^T]^T$, and
$(\GLMA_{y\varepsilon},\GLMU_{y\varepsilon},\GLMB_{y\varepsilon},\GLMV_{y\varepsilon})$,
where $\mathbbm{y}^{[n]}={[(y^{[n]})^T,(\varepsilon^{[n]})^T]}^T$, are
related by  
\begin{align}
\label{eq:GLyy:to:GLye}
&\GLMA_{y\tilde{y}}=\GLMA_{y\varepsilon} \,,~ \GLMV_{y\tilde{y}}=\GLMV_{y\varepsilon}\,,
~\GLMU_{y\tilde{y}}=\GLMU_{y\varepsilon}  T_{y\varepsilon}^{-1} \,,~
  \GLMB_{y\tilde{y}}(1,:)= T_{y\varepsilon} \GLMB_{y\varepsilon}\,,
%~  T=\left[ \begin{array}{cc} 1&0 \\1 & 1-\gamma \end{array} \right]\,.
\end{align}
where $T_{y\varepsilon}=\left[ \begin{array}{cc} 1&0 \\1 & 1-\gamma \end{array} \right]$.
\end{lemma}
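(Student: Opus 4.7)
}
The plan is to exhibit the explicit invertible linear change of variables that relates the two state representations and then simply substitute it into the GL recursion \eqref{eq:GLM:Global}, reading off the transformed coefficient matrices. The key observation is that Theorem \ref{prop:two:methods} gives us the exact algebraic relation
\begin{equation*}
\varepsilon^{[n]} \;=\; \tfrac{1}{1-\gamma}\bigl(\tilde y^{[n]}-y^{[n]}\bigr),
\qquad \text{equivalently}\qquad \tilde y^{[n]} \;=\; y^{[n]} + (1-\gamma)\,\varepsilon^{[n]},
\end{equation*}
so that the two output vectors are tied by the nonsingular $2\times 2$ map
$\mathbbm{y}^{[n]}_{y\tilde y} \;=\; \bigl(T_{y\varepsilon}\otimes I_m\bigr)\,\mathbbm{y}^{[n]}_{y\varepsilon}$
with $T_{y\varepsilon}$ as stated. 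Note $1-\gamma\neq 0$ by hypothesis, so $T_{y\varepsilon}$ is invertible.

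First, I would write out \eqref{eq:GLM:Global} for the $y\tilde y$ form in block form, suppressing the Kronecker $\otimes I_m$ factors:
\begin{equation*}
Y \;=\; \dt\,\GLMA_{y\tilde y}\,f(Y) + \GLMU_{y\tilde y}\,\mathbbm{y}^{[n-1]}_{y\tilde y},
\qquad
\mathbbm{y}^{[n]}_{y\tilde y} \;=\; \dt\,\GLMB_{y\tilde y}\,f(Y) + \GLMV_{y\tilde y}\,\mathbbm{y}^{[n-1]}_{y\tilde y}.
\end{equation*}
Then I would substitute $\mathbbm{y}^{[\cdot]}_{y\tilde y} = T_{y\varepsilon}\mathbbm{y}^{[\cdot]}_{y\varepsilon}$ on both sides. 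The internal-stage equation immediately yields $\GLMA_{y\varepsilon}=\GLMA_{y\tilde y}$ (the stage values and stage derivatives are invariant under the change of output basis) and $\GLMU_{y\varepsilon} = \GLMU_{y\tilde y}\,T_{y\varepsilon}$, which gives the third identity in \eqref{eq:GLyy:to:GLye}. Left-multiplying the output equation by $T_{y\varepsilon}^{-1}$ gives
\begin{equation*}
\mathbbm{y}^{[n]}_{y\varepsilon} \;=\; \dt\,T_{y\varepsilon}^{-1}\GLMB_{y\tilde y}\,f(Y) + T_{y\varepsilon}^{-1}\GLMV_{y\tilde y}\,T_{y\varepsilon}\,\mathbbm{y}^{[n-1]}_{y\varepsilon},
\end{equation*}
so that $\GLMB_{y\varepsilon}=T_{y\varepsilon}^{-1}\GLMB_{y\tilde y}$ (equivalently the fourth identity) and $\GLMV_{y\varepsilon} = T_{y\varepsilon}^{-1}\GLMV_{y\tilde y}\,T_{y\varepsilon}$. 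Under the standing assumption $V=I_r$ stated just before the lemma, the latter collapses to $\GLMV_{y\tilde y}=\GLMV_{y\varepsilon}=I_r$, giving the second identity.

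The only subtlety, and the step that needs a careful check, is to confirm that this change of basis is compatible with the premises of Theorem \ref{prop:two:methods}: one must verify that the initial data $\mathbbm{y}^{[0]}_{y\varepsilon}=(y(\tx_0)^T,0)^T$ is the image under $T_{y\varepsilon}^{-1}$ of the corresponding $\mathbbm{y}^{[0]}_{y\tilde y}=(y(\tx_0)^T,y(\tx_0)^T)^T$, which is immediate, and that the consistency vectors $q_0,q_1$ of the GL method transform accordingly so that the resulting $\GLye$ method indeed propagates the $\varepsilon$-component at the rate demanded by \eqref{eq:global:error:two:methods}. I do not expect any genuine obstacle here; the whole argument reduces to the observation that \eqref{eq:GLM:Global} is linear in the input/output vectors, so any invertible left action on the output block commutes with the GL recursion, and the formulas in \eqref{eq:GLyy:to:GLye} are just the conjugation rules for $\GLMA,\GLMU,\GLMB,\GLMV$ under that action.
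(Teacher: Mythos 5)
Your proposal is correct and follows essentially the same route as the paper: the paper's proof likewise substitutes $\varepsilon^{[n]}=\frac{1}{1-\gamma}(\tilde y^{[n]}-y^{[n]})$ into the $\GLye$ form of \eqref{eq:GLM:Global} and reads off the coefficient relations, which is exactly your change of basis by $T_{y\varepsilon}$ applied to the input/output blocks. Your explicit remark that $\GLMV_{y\varepsilon}=T_{y\varepsilon}^{-1}\GLMV_{y\tilde y}T_{y\varepsilon}$ collapses to the stated identity only under the standing assumption $\GLMV=I_r$ is a point the paper leaves implicit, and is worth keeping.
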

\begin{proof}
We start with a $\GLye$ method defined by
$(\GLMA_{y\varepsilon},\GLMU_{y\varepsilon},\GLMB_{y\varepsilon},\GLMV_{y\varepsilon})$
and write the resulting expression by applying \eqref{eq:GLM:Global}
with $y_{(1)}^{[n]}=y^{[n]}$ and $y_{(2)}^{[n]}=\varepsilon^{[n]}$. We then
replace $\varepsilon^{[n]}$ with $\frac{1}{1-\gamma}
\left(\tilde{y}^{[n]} - y^{[n]} \right)$ as in Theorem
\ref{prop:two:methods}, \eqref{eq:global:error:two:methods}. The
resulting expression can then be written as a $\GLyy$ scheme with
$y_{(1)}^{[n]}=y^{[n]}$ and $y_{(2)}^{[n]}=\tilde{y}^{[n]}$. This calculation
leads to \eqref{eq:GLyy:to:GLye}. This transformation is unique as
long as $\gamma \ne 1$.
\end{proof}

The following algorithm is proposed.

\noindent\begin{boxedminipage}{0.97\textwidth}
Algorithm [A:GLMGEE]: General linear methods with global error estimation \\
Initialize: $y^{[0]}=y(\tx_0)=y_0$, $\varepsilon^{[0]}=\varepsilon(\tx_0)=0$.\\
\noindent{} Solve: $y' = f(t,y)$ using  
%
%\begin{subequations}
%
%\label{eq:GE:GL}
\begin{subequations}
\label{eq:GE:GL}
\begin{align}
\label{eq:GE:GL:a}
\{y^{[n]},\varepsilon^{[n]}\} =&
\GLye(\{y^{[n-1]},\varepsilon^{[n-1]}\})\,,&[\textnormal{solution,~GEE}]\\
\label{eq:GE:GL:local:error}
\varepsilon_{\rm loc}=&\varepsilon^{[n]}-\varepsilon^{[n-1]}\,,&
           [\textnormal{local~error}]\\
\label{eq:GE:GL:high:order}
\widehat{y}^{[n]}=& y^{[n]} + \varepsilon^{[n]} = \frac{1}{1-\gamma} \tilde{y}^{[n]}
- \frac{\gamma}{1-\gamma} y^{[n]}\,.&[\textnormal{high~order}]
%\nonumber
%\\ 
%\label{eq:GE:Extrap:ERROR}
%\varepsilon&=\frac{2^p}{1-2^p} (y_{\dt,n} - y_{\frac{\dt}{2},n})\,.
\end{align}
\end{subequations}
\end{boxedminipage}

We note that the algorithm above is suitable for self-starting GL
methods using fixed and adaptive time steps (see
Sec. \ref{sec:variable:time:step} and results in Fig. \ref{fig:nr:err:adapt}). For methods that are not
self-starting, a finalizing procedure might be necessary after each
step to extract the error components in \eqref{eq:GE:GL:a}; however,
this aspect is not addressed in this study. Moreover, as expected the
cumulative sum of the local errors yields 
the global error as suggested by \eqref{eq:GE:GL:local:error} and
illustrated through results in Fig. \ref{fig:nr:err:adapt}. 

%%%%%%%%%%%%%%%%%%%%%%%%%%%%%%%%%%%%%%%%%%
%
\subsection{Consistency and preconsistency analysis}
%
%%%%%%%%%%%%%%%%%%%%%%%%%%%%%%%%%%%%%%%%%%

We now discuss consistency and preconsistency conditions in the case
of a method with $r=2$. Following \cite{Jackiewicz_B2009}, we require that
\begin{subequations} 
\label{eq:prec}
\begin{align}
\label{eq:prec:assump:in}
y^{[n-1]}_i=&q_{i,0} y(\tx_{n-1}) + \dt q_{i,1} y'(\tx_{n-1}) + {\mathcal
  O}(\dt^2)\,, \quad i=1,2\\
\label{eq:prec:stages}
Y_i=&y(\tx_{n-1} + c_i \dt) + {\mathcal O}(\dt^2) \,, \quad i=1,2,\dots,
s\\
\label{eq:prec:assump:out}
y^{[n]}_i=&q_{i,0} y(\tx_{n}) + \dt q_{i,1} y'(\tx_{n}) + {\mathcal
  O}(\dt^2)\,, \quad i=1,2\,.
\end{align}
\end{subequations}
From \eqref{eq:prec:stages} we obtain 
\begin{align*}
y(\tx_{n-1}) \qquad     \qquad          &= (u_{i,1} q_{1,0} + u_{i,2} q_{2,0} )y(\tx_{n-1}) \\
 \qquad + c_i \dt y'(\tx_{n-1}) & ~~  +
 \dt (u_{i,1}  q_{1,1} +  u_{i,2} q_{2,1}) y'(\tx_{n-1}) \\
 & ~~
 + \dt
\sum_j a_{i,j} y'(\tx_{n-1})  + {\mathcal
  O}(\dt^2)\,, \quad i=1,2,\dots,
s \,,
\end{align*}
and therefore $\GLMU q_0=1$ and the abscissa vector $c=
\GLMA\mathbbm{1} + \GLMU q_1$, where $q_0=\{q\}_{i,0}$ and
$q_1=\{q\}_{i,1}$, $i=1,\dots,r$. We next 
combine \eqref{eq:prec:assump:in} and \eqref{eq:prec:assump:out}:
\begin{align*}
q_{i,0} (y(\tx_{n-1}) + \dt y'(\tx_{n-1}) ) + \dt q_{i,1} y'(\tx_{n-1}) & = q_{i,0} y(\tx_{n-1})
+ \dt q_{i,1} y'(\tx_{n-1}) \\
& \qquad + \dt \sum_j b_{1,j} y'(\tx_{n-1}) + {\mathcal
  O}(\dt^2)\,,
\end{align*}
where we have considered that $\GLMV=I$. The consistency
condition $\GLMB\mathbbm{1} = q_0$ follows.
% 

%%%%%%%%%%%%%%%%%%%%%%%%%%%%%%%%%%%%%%%%%%
%
\subsection{Order conditions for GEE methods\label{sec:Order:GEE}}
%
%%%%%%%%%%%%%%%%%%%%%%%%%%%%%%%%%%%%%%%%%%
The order conditions are based on the algebraic representation of the
propagation of the B-series through the GL process as discussed in
Sec. \ref{sec:GL:order}. For form $\GLyy$, we need to set
$\xi_{\{1,2\}}(\emptyset)=1$ and $\xi_{\{1,2\}}(\Ttree_q)=0$, $q=1,2, 
\dots$, resulting in Runge-Kutta like conditions such as
$\widehat{\xi}_{\{i\}}(\BTreeIA)=\sum_j b_{i,j}$,
$[E\xi]_{\{i\}}(\BTreeIA)=1$, and $\widehat{\xi}_{\{i\}}(\BTreeIIA)=
\GLMB_{i,:} \GLMA \ones$,
$[E\xi]_{\{i\}}(\BTreeIIA)=1/2$, where $\widehat{\xi}$
represents the numerical output as introduced in 
\eqref{eq:GLM:error}, and $E\xi$ corresponds to the elementary weights
of the exact solution. 

Additional constraints are imposed so that Theorem
\ref{prop:two:methods} applies directly as a result of the GL
process. In particular, we need to enforce relations \eqref{eq:local:error:ode:two:methods}. To this end, we consider an order $p$ $\GLyy$ method by
setting
$E\xi_1(\Ttree)=\widehat{\xi}_1(\Ttree)=\widehat{\xi}_2(\Ttree)$, for
all $\Ttree \in \TreeSet_{p}$, and 
%
%\begin{subequations}
%\label{eq:GLMYY:Conditions}
\begin{align}
\label{eq:GLMYY:Conditions:Error}
\gamma\left(E\xi_1(\Ttree) - \widehat{\xi}_1(\Ttree)
\right)=&E\xi_2(\Ttree) -
\widehat{\xi}_2(\Ttree) \,, \quad \Ttree \in \TreeSet_{p+1}\,, \gamma \ne 1\,,
\end{align}
assuming that the inputs of the GL process $\gamma{\xi}_1(\Ttree)={\xi}_2(\Ttree)$, $\Ttree \in
\TreeSet_{p+1}$. For instance, if $p=2$ and $r=2$, then
\eqref{eq:GLMYY:Conditions:Error}  yields:
%$1+6 \gamma a_{2,1} a_{3,2} b_{1,3}-\gamma-6 a_{2,1} a_{3,2}
%b_{2,3}=0$
$\gamma\left(\frac{1}{6}-\GLMB_{1,:} \GLMA \GLMA \ones\right)=
\left(\frac{1}{6}-\GLMB_{2,:} \GLMA \GLMA \ones\right)$ 
for
$\Ttree=\BTreeIVA$ and
%$1/3+\gamma (-1/3+a_{2,1}^2b_{1,2}+(a_{3,1}+a_{3,2})^2 b_{1,3})
%-a_{2,1}^2 b_{2,2}-(a_{3,1}+a_{3,2})^2 b_{2,3}=0$
$\gamma\left(\frac{1}{3}-\GLMB_{1,:} (\GLMA \ones)^2 \right)=
\left(\frac{1}{3}-\GLMB_{2,:} (\GLMA \ones)^2\right)$
for $\BTreeIIIA$, where the exponent is applied componentwise.
Then the error of the base method satisfies 
\begin{align}
%\label{eq:GLMYY:Conditions:Base}
  \nonumber
  \varepsilon_p=&\frac{\dt^p}{p!}\sum_{\Ttree \in \TreeSet_{p+1}}
(E\xi_1(\Ttree)-\widehat{\xi}_1(\Ttree)) F(\Ttree)(y) + {\mathcal
  O}(\dt^{p+1}) \,. 
\end{align}
%\end{subequations}
%
Expression \eqref{eq:GLMYY:Conditions:Error} is equivalent to imposing
\eqref{eq:local:error:ode:two:methods}. We also impose
stability order \cite{Butcher_2002}
$\tilde{p}=p+3$: $\Phi(\exp(z),z) = {\mathcal O} (\dt^{\tilde{p}})$
defined in \eqref{eq:GL:StabFunction}, to
obtain robust methods.  

The two solutions that evolve through the GL process are connected
internally, and therefore the error estimation may be hindered in the
case of unstable dynamics as discussed in \cite{Prince_1978}. In
Fig. \ref{fig:nr:conv:principal:error} we illustrate such a
behavior, and in Fig. \ref{fig:cond:error:error} we show how the
convergence is affected if the decoupling conditions (see Proposition \ref{prop:GL:independence}) are satisfied in
turn. To this 
end, we require that the elementary differentials of the two methods
resulting from applying the GL method be independent from each other's entries for all
trees of order $p+1$ and $p+2$. According to \eqref{eq:GLM:error}, the
output weights depend on the method coefficients and the input
weights. This requirement therefore can be expressed as
\begin{align}
\label{eq:error:terms}
&\widehat{\xi}_{\{\ell\}}(\Ttree_j)(\xi_{\{1\}}(\Ttree_k),\xi_{\{2\}}(\Ttree_k),\xi_{\{1,2\}}(\Ttree_q))=\widehat{\xi}_{\{\ell\}}(\Ttree_j)(\xi_{\{\ell\}}(\Ttree_k),\xi_{\{1,2\}}(\Ttree_q))
\,,\\
\nonumber
& \qquad\forall j, k, ~~ \TreeOrder(\Ttree_k)\,,~\TreeOrder(\Ttree_j) \in
\{  p+1\,,~p+2\}\,, ~\TreeOrder(\Ttree_q) \in \{1,\dots,p\}\,,
\end{align}
where $\xi_{\{\ell\}}(\Ttree_j)$ is the coefficient of input $\ell$
corresponding to tree index $j$ and $\widehat{\xi}_{\{i\}}(\Ttree_k)$ is the
coefficient of GL output $i$ corresponding to tree index $k$. In other
words, output 1 that corresponds to tree index $j$ does not depend on
input 2 of tree index $k$, and the same for output 2 and input 1.

We now establish a mechanism by which conditions for the method's
coefficients are set so that \eqref{eq:error:terms} is satisfied.  
\begin{lemma}\label{lem:GL:stage:outpt}
The elementary weights of a GL method \eqref{eq:GLM:Global} with
$\GLMV=I$ satisfy 
\begin{align}
  \nonumber
  %\label{eq:GLtree:output}
\widehat{\xi}(\Ttree_{j})&= K + \xi(\Ttree_{j}) + \GLMB \GLMU
\xi(\Ttree_{[p-1]}) + G(\Ttree_{[\{1,2,\dots,p-2\}]})\,,
 ~ \TreeOrder(\Ttree_j) = p\,,~ j=1,2,\dots\,,
\end{align}
where $K$ is a constant that depends on the tree index,
$\Ttree_{[\ell]}$ are all the trees that correspond to order $\ell$
($\TreeOrder(\Ttree_i) = \ell$, $\forall i$),
 and $G$ is a function of $\Ttree$ of
order $1$ to $p-2$, and
$\xi(\Ttree_{q})=[\xi_1(\Ttree_{q}),\xi_2(\Ttree_{q})]^T$.  
\end{lemma}
\begin{proof}
For the first tree $\Ttree_\emptyset$ we have $\eta
D(\Ttree_\emptyset)=0$. The next tree is $\Ttree_1=\BTreeIA$, for which
$\eta D(\BTreeIA)=1$. Relation \eqref{eq:GLM:error} gives
\begin{align*}
\eta(\Ttree_1)&=\GLMA \eta D(\Ttree_1) + \GLMU \xi(\Ttree_1) = \GLMA
\ones_s + \GLMU \xi(\Ttree_1)\,.
\end{align*}
This is allowed by Lemma \ref{lem:ED:Independence}. Next, for order
2, we have $\eta
D(\Ttree_2)=\eta D(\BTreeIIA)=\eta(\Ttree_1)$ and 
\begin{align*}
\eta(\Ttree_2)&=\GLMA \eta D(\Ttree_2) + \GLMU \xi(\Ttree_2) = \GLMA
\cdot (\GLMA \ones+ \GLMU \xi(\Ttree_1) ) + \GLMU \xi(\Ttree_2)\,.
\end{align*}
For the next tree we have $\eta D(\Ttree_3)=\eta
D(\BTreeIIIA)=(\eta(\Ttree_1))^2$ and  
\begin{align*}
\eta(\Ttree_3)&=\GLMA (\GLMA \ones_s + \GLMU \xi(\Ttree_1))^2 +
\GLMU \xi(\Ttree_3) \,,
\end{align*}
where the power is taken componentwise. The last third-order tree
gives $\eta D(\Ttree_4)=\eta D(\BTreeIVA)=\eta(\Ttree_2)$ and 
\begin{align*}
\eta(\Ttree_4)&=\GLMA (\GLMA
\cdot (\GLMA \eta D(\Ttree_1) + \GLMU \xi(\Ttree_1) ) + \GLMU \xi(\Ttree_2)) +
\GLMU \xi(\Ttree_4)\\
&= \GLMA^3 \ones + \GLMA^2 \GLMU \xi(\Ttree_1) +
\GLMA \GLMU \xi(\Ttree_2)) + \GLMU \xi(\Ttree_4)\,.
\end{align*}
We then arrive at the following recurrence formula:
%
%\begin{subequations}
%
%\label{eq:etaD}
\begin{align}
  \label{eq:etaD:stages}
  %\nonumber
  \eta D(\Ttree_{[p]})&=W(\eta(\Ttree_{[\{1,2, \dots, p-2\}]})) +  \GLMU \xi(\Ttree_{[p-1]})\,,
  % \GLMA \prod_{j \in {\mathcal I}_{p-1}} \eta(\Ttree_j) + \GLMU
   %\xi(\Ttree_{p-1})\,.
\end{align}
where $W(\eta(\Ttree_{[\{1,2, \dots, p-2\}]}))$ is a function of
trees that are of order $1,2, \dots, p-2$ and of the method's
coefficients $\GLMA$ and $\GLMU$.
Similarly, one can verify that the recurrence for the output quantities satisfies
\begin{align}
\label{eq:etaD:output}
\widehat{\xi}(\Ttree_{[p]})&=\GLMB W(\eta(\Ttree_{[\{1,2, \dots, p-2\}}))  + \GLMB \GLMU \xi(\Ttree_{[p-1]})  +
\xi(\Ttree_{[p]})\,.
\end{align}
%\end{subequations}
%
%This is a consequence of the fact that $D(\Ttree_p)=\prod_{k \in
%  \{1,2,p-1\}} \Ttree_k$.
For $p=3$ we have trees with index 3 and 4, and the output is obtained again from
\eqref{eq:GLM:error} and using the above derivations as
\begin{align*}
\widehat{\xi}(\Ttree_3)&=\GLMB((\GLMA \ones_s)^2 + (\GLMU
\xi(\Ttree_1))^2)    + \xi(\Ttree_3) = \GLMB W(\xi(\Ttree_1)) + \xi(\Ttree_{3})\,,\\
\widehat{\xi}(\Ttree_4)&=\GLMB(\GLMA
\cdot (\GLMA \ones + \GLMU \xi(\Ttree_1) )) + \GLMB\GLMU
\xi(\Ttree_2) + \xi(\Ttree_4) = \GLMB W(\xi(\Ttree_1)) + \GLMB
\GLMU \xi(\Ttree_{2}) +  \xi(\Ttree_{4})\,, 
\end{align*}
which gives off the recurrence.
For arbitrary $p$ an inductive argument can be made. One needs to repeat the steps above by using
\eqref{eq:GLM:error} and computing the derivatives
\eqref{eq:etaD:stages}, which will yield $\GLMB \GLMU
\xi(\Ttree_{j})$ terms for $\TreeOrder(\Ttree_j) = p-1$,
$\xi(\Ttree_{j})$ for $\TreeOrder(\Ttree_j) = p$, and more complicated
expressions for $\TreeOrder(\Ttree_j) = 1,2,\dots,p-2$ as in
\eqref{eq:etaD:output}. 
\end{proof}

Proposition \ref{prop:GL:independence} provides sufficient conditions for the
independence assumptions \eqref{eq:error:terms}.

\begin{proposition}[Output independence of GEE
    method\label{prop:GL:independence}] 
A GEE method \eqref{eq:GLM:Global} with $r=2$ for which the off-diagonal elements of matrix $\GLMB\GLMU$ are
zero satisfies the
independence assumption \eqref{eq:error:terms}.
\end{proposition}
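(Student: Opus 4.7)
The plan is to invoke the structural formula \eqref{eq:GLtree:output} of Lemma \ref{lem:GL:stage:outpt} and then verify \eqref{eq:error:terms} by exhausting the four subcases for the tree orders. The key observation extracted from the lemma is that for any tree of order $q$,
\begin{align*}
\widehat{\xi}_i(\Ttree_q) = K + \xi_i(\Ttree_q) + \GLMB\GLMU\,\xi(\Ttree_{q-1}) + G\!\left(\xi(\Ttree_\ell),\, \ell \le q-2\right),
\end{align*}
so the dependence of $\widehat{\xi}_i(\Ttree_q)$ on an input of order $q$ collapses to the same-component term $\xi_i(\Ttree_q)$, while the dependence on an input of order $q-1$ is exactly the linear map $\GLMB\GLMU$ applied componentwise; the remaining lower-order inputs enter only through $G$, and the constant $K$ does not involve $\xi$ at all.

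I would then walk through $(\TreeOrder(\Ttree_j),\TreeOrder(\Ttree_k)) \in \{p+1,p+2\}^2$. When $\TreeOrder(\Ttree_j)=\TreeOrder(\Ttree_k)$, the only contribution of $\xi_k(\Ttree_k)$ to $\widehat{\xi}_i(\Ttree_j)$ is through the diagonal term $\xi_i(\Ttree_j)$, which vanishes for $i\ne k$ irrespective of $\GLMB\GLMU$. When $\TreeOrder(\Ttree_j)=p+1$ and $\TreeOrder(\Ttree_k)=p+2$, an output of order $p+1$ simply cannot depend on an input of strictly greater order, because $\xi(\Ttree_k)$ enters stages only through $\GLMU\xi(\Ttree_k)$ at the same tree and through subsequent nonlinear composition, both of which produce contributions at trees of order $\ge p+2$. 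The only genuinely nontrivial subcase is $\TreeOrder(\Ttree_j)=p+2$ and $\TreeOrder(\Ttree_k)=p+1$: here the relevant coefficient read off from the structural formula is $(\GLMB\GLMU)_{ik}$, which vanishes for $i\ne k$ precisely under the off-diagonal hypothesis.

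The main obstacle is making the order-tracking in the last subcase rigorous, namely verifying that no hidden dependence of $\widehat{\xi}_i(\Ttree_{p+2})$ on $\xi_k(\Ttree_{p+1})$ sneaks in through the nonlinear stage product $\GLMB\GLMA\prod_{j\in\mathcal{I}_{p+1}}\eta(\Ttree_j)$ of \eqref{eq:etaD:output}. I would handle this by unrolling the recursion \eqref{eq:etaD}: since $\xi(\Ttree_{p+1})$ enters an internal stage $\eta(\Ttree')$ only when $\TreeOrder(\Ttree')\ge p+1$, such an $\eta(\Ttree')$ can contribute to $\widehat{\xi}_i(\Ttree_{p+2})$ only via the configuration $\Ttree_{p+2}=[\Ttree_{p+1}]$, i.e.\ through the linear term $\GLMB\GLMU\,\xi(\Ttree_{p+1})$ already singled out by Lemma \ref{lem:GL:stage:outpt}. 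Annihilating the off-diagonal entries of $\GLMB\GLMU$ is therefore enough to enforce \eqref{eq:error:terms}, which completes the argument.
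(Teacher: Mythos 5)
Your proposal is correct and follows essentially the same route as the paper: both invoke the structural formula of Lemma \ref{lem:GL:stage:outpt} and observe that the only cross-component coupling of an output at tree order $p+2$ on an input at order $p+1$ is through the linear map $\GLMB\GLMU$, so annihilating its off-diagonal entries yields \eqref{eq:error:terms}. Your additional bookkeeping (the explicit four-case split on tree orders and the check that no dependence leaks through the nonlinear stage product) merely makes explicit what the lemma already encapsulates.
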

\begin{proof}
Using the results of Lemma \ref{lem:GL:stage:outpt}, we compute the
output $i$ for trees of order $p+1$ and assume that the input is
consistent of order $p$, that is, $\xi_i(\Ttree_{[\{1,2,\dots,p]\}})=0$. We obtain 
\begin{align*}
\widehat{\xi_i}(\Ttree_{[p+1]})&= K + \xi_i(\Ttree_{[p+1]}) + (\GLMB \GLMU)_{i,:}\,
\xi(\Ttree_{[p]}) + G(\Ttree_{[\{1,2,\dots,p-1\}]}) = K +
\xi_i(\Ttree_{[p+1]}) \,,~ i=1,2\,. 
\end{align*}
For $p+2$ and the fact that $\GLMB\GLMU$ is a
diagonal matrix, we obtain 
\begin{align*}
\widehat{\xi}_i(\Ttree_{[p+2]})&= K + \xi_i(\Ttree_{[p+2]}) + (\GLMB \GLMU)_{i,:}\,
\xi(\Ttree_{[p+1]}) + G(\Ttree_{[k \in \{1,2,\dots,p\}]}) \\
&=  K + \xi_i(\Ttree_{[p+2]}) + (\GLMB \GLMU)_{i,:}\,
\xi(\Ttree_{[p+1]}) \\
&=  K + \xi_i(\Ttree_{[p+2]}) + (\GLMB \GLMU)_{i,i}\,
\xi_i(\Ttree_{[p+1]})\,,~ i=1,2\,. 
\end{align*}
\end{proof}

A similar calculation for $p+3$ reveals that matrices
$\GLMB\GLMA\GLMU$ and $\GLMB \diag(\GLMA \ones) \GLMU$ need to have
only diagonal entries. These conditions are necessary when dealing
with mildly stiff systems and for long-time
simulations. Collectively, we call these decoupling conditions. We
note that such requirements were identified both by Murua and Makazaga
\cite{Murua_2000,Makazaga_2003} and Shampine
\cite{Shampine_1986b}. The work presented herein generalizes this
concept and applies it to a wider class of methods. 

\paragraph{Order barriers} The order barriers of an ($r$,$s$)-GL
method apply to the methods with built-in error estimates (GEE) of one
order lower, and this is rather an upper bound. For example, a GEE
method of order 2 has the same restrictions on $r$ and $s$ as does a GL
method of order 3. These restrictions are reasonable because otherwise the
underlying higher-order approximation discussed in Sec.
\ref{sec:Under:High:Order}  would violate the GL order 
barriers. 
\paragraph{Variable steps} The self-starting methods introduced in
this study are amenable to variable time steps. This fact results from the
discussion in Sec. \ref{sec:variable:time:step} and will be
illustrated numerically in 
Sec. \ref{sec:test:problems:experiments} (see
Fig. \ref{fig:nr:err:adapt}). However, the construction 
of GEE based on GL 
methods that propagate quantities from past steps (i.e., not
self-starting) with adaptive time steps will necessarily have
coefficients that depend on the time steps or by a rescaling strategy
\cite{Butcher_2003}, and will have to satisfy
the properties listed above. Moreover, a finalizing procedure might be
necessary at every step in order to extract the error
components. Constructing such a method can quite involved, but it is 
nonetheless possible, as illustrated by particular 
instances in \cite{Aid_1997}.  

%%%%%%%%%%%%%%%%%%%%%%%%%%%%%%%%%%%%%%%%%%
%
\subsection{Optimal methods\label{sec:optimal:methods}}
%
%%%%%%%%%%%%%%%%%%%%%%%%%%%%%%%%%%%%%%%%%%
We now discuss the need to balance the local truncation errors, which we
would like to be as small as possible, with the ability to capture the
global errors. Solving for the correction procedure is attractive
because it allows the reuse of methods with well-established
properties.  In particular, one may
consider methods that minimize the truncation errors. However, when
such optimal methods are used in the context of global error estimation, 
one must verify that the errors are still quantifiable. For
instance, if not all the truncation error terms are nonzero, then
special care needs to be exercised because some problems may render
the global error estimation ``blind'' to local error accumulation.

\begin{figure}
  \centering
\includegraphics[width=.4\textwidth]{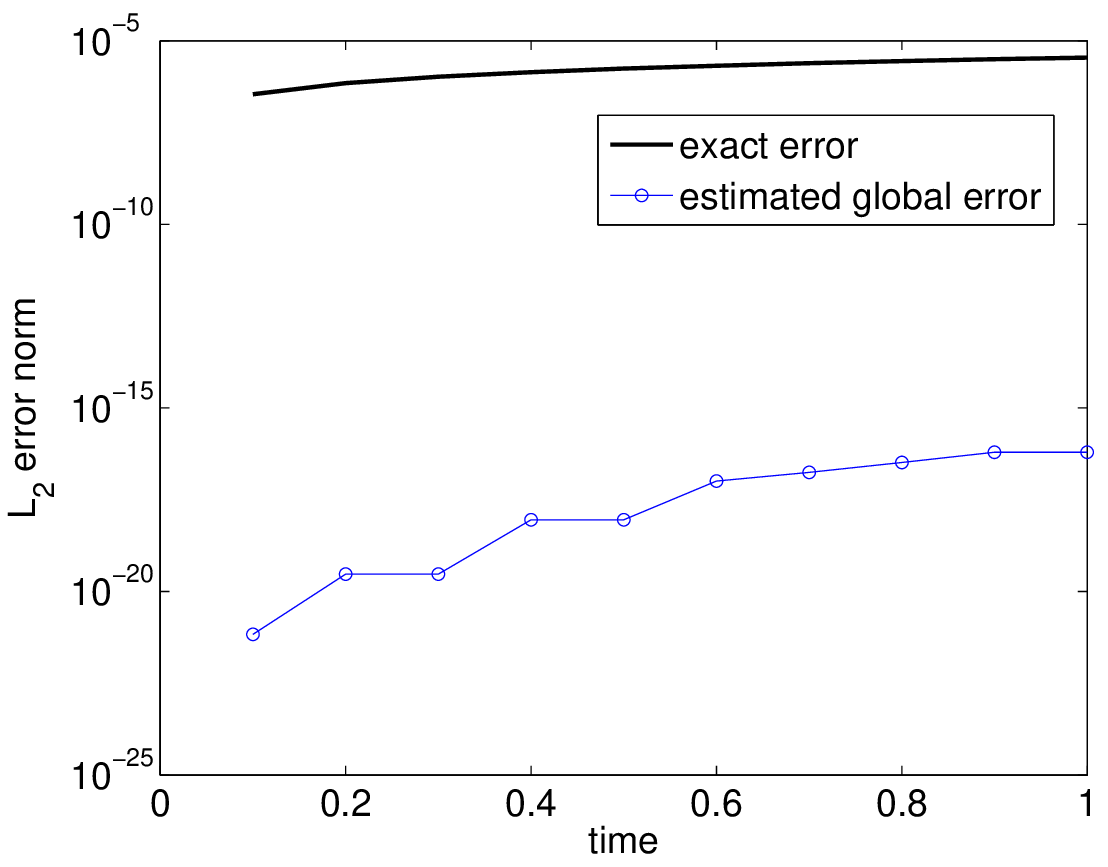}
  \caption{Failure to capture the
  global errors correctly for system $y_1'=1$, $y_2'=\kappa_2 y_1^3$,
$y_3'=\kappa_3 y_1^4$ solved with % Dormand and Prince's 
RK3(2)G1 \eqref{eq:RK3Q:in}
 \cite{Dormand_1984}.\label{fig:DormandRK3Q}}
\end{figure}

To illustrate this rather subtle point, we consider using the
procedure for solving for the correction \eqref{eq:Solving:Correction:ODE} 
with method RK3(2)G1 \eqref{eq:RK3Q:in} as introduced in
\cite{Dormand_1984}. This is a third-order scheme; however, it
 has no errors that correspond to fourth-order trees  $\BTreeVIA$
 and $\BTreeVIIIA$ but does not resolve
exactly $\BTreeVIIA$ and $\BTreeVA$; otherwise it would have been a
fourth-order method. With the aid of Lemma \ref{lem:ED:Independence} we
construct a simple problem: $y_1'=1$, $y_2'=\kappa_2 y_1^3$,
$y_3'=\kappa_3 y_1^4$, where $\kappa_i$ are some constants (e.g., $\kappa_2=1/6$ and $\kappa_3=4$). For this problem, the RK3(2)G1
is an order 4 method because the tall tree that would have affected the third
component is matched exactly by this method. This means that the base
method $y$ has the same order as the higher-order companion,
$\widehat{y}$. Therefore, the third component
can cause the results to be unreliable. In Fig. \ref{fig:DormandRK3Q}
we show the third component, which confirms the inadequacy in the error
estimation procedure. We note that this analysis also applies to the
schemes proposed in \cite{Murua_2000,Makazaga_2003}.

%%%%%%%%%%%%%%%%%%%%%%%%%%%%%%%%%%%%%%%%%%
%
\subsection{Second-order explicit Runge-Kutta-like methods}
%
%%%%%%%%%%%%%%%%%%%%%%%%%%%%%%%%%%%%%%%%%%
We now introduce a few methods of type [A:GLMGEE] \eqref{eq:GE:GL}. We
begin with a detailed inspection of second-order methods.
Schemes with $s=2$ are not possible because that would imply that one
can have an explicit third-order method by \eqref{eq:high:order:estimate} with only two
stages, which is a statement that is easy to disprove.  

A method with $s=3$ and $\gamma=0$ in $\GLye$ form is given by the
following tableaux,  
\begin{align}
% METHOD A1
\label{GLM:E:s2:p2:RK:1}
\GLMMye=\left[
\begin{array}{ccc|cc}
0     & 0     & 0   & 1  &   0 \\ 
1     & 0     & 0   & 1  & 10 \\
1/4   & 1/4   & 0   & 1  &  -1 \\
\hline
1/12  & 1/12  & 5/6 & 1  &   0\\
1/12  & 1/12  &-1/6 & 0  &   1\\
\end{array}
\right]\,,
\end{align}
where the four blocks represent
$(\GLMA_{y\varepsilon},\GLMU_{y\varepsilon},\GLMB_{y\varepsilon},\GLMV_{y\varepsilon})$
as discussed above. The diagonal $\GLMB\GLMU$ condition is satisfied
($[\GLMB\GLMU]$); however, the $\GLMB\GLMA\GLMU$ one is not
($[\cancel{\GLMB\GLMA\GLMU}])$.

Method \eqref{GLM:E:s2:p2:RK:1} can then be expressed as follows:
\begin{subequations}
\label{GLM:E:s2:p2:RK:1:det}
\begin{align}
Y_1 =& y^{[n-1]} \,,\\
Y_2 =& y^{[n-1]} + 10 \varepsilon^{[n-1]} + \dt f(Y_1) \,,\\
Y_3 =& y^{[n-1]} - \varepsilon^{[n-1]} + \dt \left(\frac{1}{4} f(Y_1) +
\frac{1}{4} f(Y_2) \right) \,, \\
%\hline
y^{[n]}=&y^{[n-1]} +   \dt \left(\frac{1}{12} f(Y_1) +
\frac{1}{12} f(Y_2)  + \frac{5}{6} f(Y_3)\right) \,, \\
\varepsilon^{[n]}=&\varepsilon^{[n-1]} +   \dt \left(\frac{1}{12} f(Y_1) +
\frac{1}{12} f(Y_2)  - \frac{1}{6} f(Y_3)\right)\,.
\end{align}
\end{subequations}
In \eqref{GLM:E:s2:p2:RK:1:det} we note the Runge-Kutta structure;
however, we see that the defect takes an active role in the stage calculations. Using \eqref{eq:GLyy:to:GLye}, we obtain the $\GLyy$ form as
\begin{align}
\label{GLM:E:s2:p2:RK:1:GLyy}
\GLMMyy=\left[
\begin{array}{ccc|cc}
0     & 0     & 0   & 1  &   0 \\ 
1     & 0     & 0   & -9 &  10 \\
1/4   & 1/4   & 0   & 2  &  -1 \\
\hline
1/12  & 1/12  & 5/6 & 1  &   0\\
1/6   & 1/6   & 2/3 & 0  &   1\\
\end{array}
\right]\,.
\end{align}
In particular, \eqref{GLM:E:s2:p2:RK:1:GLyy} is expressed as
\begin{subequations}
\label{GLM:E:s2:p2:RK:1:det:GLyy}
\begin{align}
Y_1 =& y^{[n-1]} \,,\\
Y_2 =& -9 y^{[n-1]} + 10 \tilde{y}^{[n-1]} + \dt f(Y_1) \,,\\
Y_3 =& 2 y^{[n-1]} - \tilde{y}^{[n-1]} + \dt \left(\frac{1}{4} f(Y_1) +
\frac{1}{4} f(Y_2) \right) \,, \\
%\hline
\label{GLM:E:s2:p2:RK:1:det:GLyy:yn}
y^{[n]}=&y^{[n-1]} +   \dt \left(\frac{1}{12} f(Y_1) +
\frac{1}{12} f(Y_2)  + \frac{5}{6} f(Y_3)\right) \,,\\
\tilde{y}^{[n]}=&\tilde{y}^{[n-1]} +   \dt \left(\frac{1}{6} f(Y_1) +
\frac{1}{6} f(Y_2)  + \frac{2}{3} f(Y_3)\right)\,,\quad
\varepsilon^{[n]} = \tilde{y}^{[n]} -  y^{[n]}\,.
\end{align}
\end{subequations}
Here we note the explicit contribution of two solutions. A solution of
order 3 is obtained according to \eqref{eq:high:order:estimate} by
$\widehat{y}^{[n]}=\tilde{y}^{[n]}$ because $\gamma=0$. Moreover, a local error estimate
for $y^{[n]}$ in \eqref{GLM:E:s2:p2:RK:1:det:GLyy:yn} corresponds to 
\begin{align}
\label{eq:GLM:E:s2:p2:RK:1:loc}
\varepsilon_{\rm loc}=\varepsilon^{[n]}-\varepsilon^{[n-1]}= \dt \left(\frac{1}{12} f(Y_1) +
\frac{1}{12} f(Y_2)  - \frac{1}{6} f(Y_3)\right)\,,
\end{align}
which is an obvious statement. This is also obtained by replacing
$\tilde{y}^{[n-1]}$ by  $y^{[n]}$ in the right-hand sides of
\eqref{GLM:E:s2:p2:RK:1:det:GLyy}
and taking the differences between the two solutions or setting
$\varepsilon^{[n-1]}=0$ in \eqref{GLM:E:s2:p2:RK:1:det}. Additional second-order methods are given in Appendix 
\ref{app:more:second:orders}. We remark that methods using [A:SolCor] require
at least four stages. While this seems like a marginal improvement, it it
expected to reap more benefits when considering higher-order methods. 

%%%%%%%%%%%%%%%%%%%%%%%%%%%%%%%%%%%%%%%%%%
%
\subsection{Third-order explicit Runge-Kutta-like methods}
%
%%%%%%%%%%%%%%%%%%%%%%%%%%%%%%%%%%%%%%%%%%

Closed-form solutions were difficult to obtain for methods of order
3. We therefore explored the space of such methods using a numerical
optimization such as in \cite{Constantinescu_A2010b}. One method of
order 3 with $\gamma=0$, $s=5$ stages, $[\GLMB\GLMU,\GLMB\GLMA\GLMU]$, and having 
significant negative real axis stability was found to have the
following coefficients up to 40 digits accuracy:
\begin{align}
\label{eq:GLyy:third:order}
\begin{array}{lll}
\ST
a_{2, 1} = -\frac{2169604947363702313}{24313474998937147335}, &
a_{3, 1} = \frac{46526746497697123895}{94116917485856474137}, &
a_{3, 2} = -\frac{10297879244026594958}{49199457603717988219}, \\
\ST
a_{4, 1} = \frac{23364788935845982499}{87425311444725389446}, &
a_{4, 2} = -\frac{79205144337496116638}{148994349441340815519},& 
a_{4, 3} = \frac{40051189859317443782}{36487615018004984309}, \\
\ST
a_{5, 1} = \frac{42089522664062539205}{124911313006412840286}, &
a_{5, 2} = -\frac{15074384760342762939}{137927286865289746282}, &
a_{5, 3} = -\frac{62274678522253371016}{125918573676298591413},\\ 
\ST
a_{5, 4} = \frac{13755475729852471739}{79257927066651693390}, &
b_{1, 1} = \frac{61546696837458703723}{56982519523786160813},& 
b_{1, 2} = -\frac{55810892792806293355}{206957624151308356511},\\ 
\ST
b_{1, 3} = \frac{24061048952676379087}{158739347956038723465}, &
b_{1, 4} = \frac{3577972206874351339}{7599733370677197135}, &
b_{1, 5} = -\frac{59449832954780563947}{137360038685338563670},\\ 
\ST
b_{2, 1} = -\frac{9738262186984159168}{99299082461487742983}, &
b_{2, 2} = -\frac{32797097931948613195}{61521565616362163366}, &
b_{2, 3} = \frac{42895514606418420631}{71714201188501437336}, \\
\ST
b_{2, 4} = \frac{22608567633166065068}{55371917805607957003}, &
b_{2, 5} = \frac{94655809487476459565}{151517167160302729021}, &
u_{1, 1} = \frac{70820309139834661559}{80863923579509469826}, \\
\ST
u_{1, 2} = \frac{10043614439674808267}{80863923579509469826}, &
u_{2, 1} = \frac{161694774978034105510}{106187653640211060371},& 
u_{2, 2} = -\frac{55507121337823045139}{106187653640211060371}, \\
\ST
u_{3, 1} = \frac{78486094644566264568}{88171030896733822981}, &
u_{3, 2} = \frac{9684936252167558413}{88171030896733822981}, &
u_{4, 1} = \frac{65394922146334854435}{84570853840405479554}, \\
\ST
u_{4, 2} = \frac{19175931694070625119}{84570853840405479554}, &
u_{5, 1} = \frac{8607282770183754108}{108658046436496925911}, &
u_{5, 2} = \frac{100050763666313171803}{108658046436496925911}.
\end{array}
\end{align}
We note that this is not an optimal method. It is just an example that
was relatively easy to obtain and will be used in the numerical
experiments. We note that method RK3(2)G1 \eqref{eq:RK3Q:in} as introduced in
\cite{Dormand_1984} requires 8 stages (that number reduces to 7 because of
its FSAL property).

%%%%%%%%%%%%%%%%%%%%%%%%%%%%%%%%%%%%%%%%%%
%
\section{Relationships with other global error estimation strategies\label{sec:relations}}
%
%%%%%%%%%%%%%%%%%%%%%%%%%%%%%%%%%%%%%%%%%%
Here we discuss the relationship between our approach and the existing
strategies that we focus on in this study. We show how the latter are
particular instantiations of the strategy introduced here. This
inclusion is facilitated by the use of Lemma \ref{lem:GLMyy:GLMye},
which reveals a linear relationship between propagating two solutions
and propagating one solution and its defect. We discuss below in some detail
the procedure for solving for the correction and the extrapolation
approach. Method [A:ZaPr] follows from the same discussions as those
for [A:SolCor]. To express algorithm [A:SoErEq] as a GL method,
we need more involved manipulations of these methods. Method 
[A:ExPrErEq] with the exact principal error equation
\eqref{eq:ExactPrincipalError} can 
obviously be represented as a GL scheme. Methods that implicitly solve the
error equation can also be represented as GL schemes; however, in this
study we  will not expand on this point. 

%%%%%%%%%%%%%%%%%%%%%%%%%%%%%%%%%%%%%%%%%%
%
\subsection{Approach for solving for the correction\label{sec:Zadunaisky}}
%
%%%%%%%%%%%%%%%%%%%%%%%%%%%%%%%%%%%%%%%%%%
Let us consider the Runge-Kutta methods that integrate the global
errors introduced by \cite{Dormand_1984,Dormand_1985,Dormand_1994,Murua_2000,Makazaga_2003}. The RK tableau
is defined by the triplet ($\RKA$, $\RKb$, $\RKc$) and the
interpolation operators by ($\DRKB$, $\DRKD$), where  $\DRKB \cdot
[\theta^0, \theta^1, \dots, \theta^s]^T$  yields the interpolant weight
vector and  $\DRKD \cdot [\theta^0, \theta^1, \dots, \theta^s]^T$ yields
its derivative. In particular, $\DRKD_{ij}=\DRKB_{ij} \cdot j$,
$j=1,\dots,s$. Denote by $b^*_i(\theta)= \sum_{j=1}^{p^*} \DRKB_{ij}
\theta^j$, $d^*_i(\theta)= \sum_{j=1}^{p^*} \DRKD_{ij} \theta^j$, and
consider the dense output formula given by 
\begin{align}
\nonumber
P(\tx+\theta \dt) = y_n + \theta \dt \sum_{i=1}^s b^*_i f_i\, ~
\textnormal{and} ~
P'(\tx+\theta\dt) = \theta \dt \sum_{i=1}^s d^*_i f_i\,
\end{align}
and the error equation that is being solved is
\eqref{eq:Solving:Correction:ODE:z}
% (
%%
%\begin{align}
($ \varepsilon'(\tx)  = f(\tx,P(\tx)+\varepsilon(\tx)) - P'(\tx)$).
 %\label{eq:DC:ODE:ErrorEQ:2:bis:2} \,.
%\end{align}
%
We denote by $\ODRKB =\diag\{\RKc\} \cdot \DRKB \cdot W(\RKc)^T$, where
$W(\RKc)$ is the Vandermonde matrix with entries $\RKc$; that is,
$\{W(\RKc)\}_{ij}=\RKc_i^{j-1}$; and $\ODRKD = \DRKD \cdot W(\RKc)^T$. 
The resulting method cast in GL format \eqref{eq:GLM:Global} is 
\begin{align}
\label{eq:Solving:Correction:GLM}
\left[
\begin{array}{c}
Y_1\\
Y_2\\
y_{n+1}\\
\varepsilon_{n+1}
\end{array} 
\right] 
=
\left[
\begin{array}{cc|cc}
\RKA & 0 &\ones_s  & 0\\
\ODRKB - \RKA \ODRKD &\RKA  &\ones_s & \ones_s\\
\hline
\RKb^T &0 &1 &0\\
-\RKb^T \ODRKD &  \RKb^T  &0 &1
\end{array} 
\right] 
\left[
\begin{array}{c}
\dt f(Y_1)\\
\dt f(Y_2)\\
y_{n}\\
\varepsilon_{n}
\end{array} 
\right] \,.
\end{align}
Here we express the method for a scalar problem, in order to avoid the tensor
products, and we represent the stacked stages in $Y_{\{1,2\}}$. 
For example, method RK3(2)G1 \cite{Dormand_1984} is given by the
following Butcher tableau: 
\begin{align}
\label{eq:RK3Q:in}
& \begin{array}{c|cccc}
\ST 0          &         0\\
\ST \frac{1}{2}& \frac{1}{2}   &   0 \\
\ST 1          &   -1          &   2        &0\\
\ST 1          &   \frac{1}{6} &\frac{2}{3} &\frac{1}{6}&0\\
\cline{1-5}
\ST &  \frac{1}{6} &\frac{2}{3} &\frac{1}{6}&0
\end{array}\,,
\quad
\begin{array}{l}
\DRKB=
\left[
\begin{array}{ccccc}
1&-\frac{3}{2}&\frac{2}{3}\\
0&2&-\frac{4}{3}\\
0& \frac{3}{6}&-\frac{2}{6}\\
0&-1&1
\end{array}
\right]\\
\ST
\DRKD_{ij}=\DRKB_{ij} \cdot j
\end{array}
\,.
\end{align}
The equations to be solved when using the procedure for solving for
the correction [A:SolCor] are
then \eqref{eq:Solving:Correction:ODE}; however, one can show that
they are equivalent to solving \eqref{eq:Solving:Correction:GLM} and
using the strategy [A:GLMGEE] \eqref{eq:GLM:Global} introduced
here. The explicit coefficients are listed in tableau
\eqref{eq:RK3Q:in:GLM}. This strategy of estimating the global errors
is called Runge-Kutta triplets; a similar discussion can be extended
for  the peer-triplets strategy \cite{Weiner_2014}.
The Zadunaisky procedure can be shown to have a similar
interpretation; however, it is a little more expensive, and the
analysis has to be carried over several steps. We will draw
conclusions about its behavior by using [A:SolCor] as a proxy.

%%%%%%%%%%%%%%%%%%%%%%%%%%%%%%%%%%%%%%%%%%
%
\subsection{Global error extrapolation}
%
%%%%%%%%%%%%%%%%%%%%%%%%%%%%%%%%%%%%%%%%%%
Let us consider again the Runge-Kutta methods defined by the triplet
($\RKA$, $\RKb$, $\RKc$) of order $p$. By applying
\eqref{eq:glabal:error:extrapolation} we obtain the method in the GL 
format,   
\begin{align}
\label{eq:Extrapolation:GLM}
\left[
\begin{array}{c}
Y_1\\
Y_2\\\
Y_3\\
y_{n+1}\\
\varepsilon_{n+1}
\end{array} 
\right] 
=
\left[
\begin{array}{ccc|cc}
\RKA & 0 &0  &\ones_s  & 0\\
0 & \frac{1}{2} \RKA &0  &\ones_s  &\beta^{-1} \ones_s\\
0 & \frac{1}{2} \RKb^T \otimes \ones_s & \frac{1}{2} \RKA  &\ones_s  &
\beta^{-1} \ones_s\\ 
\hline
\RKb^T &0 & 0&1 &0\\
-\beta \RKb^T &  \frac{\beta}{2} \RKb^T&  \frac{\beta}{2} \RKb^T  &0 &1
\end{array} 
\right] 
\left[
\begin{array}{c}
\dt f(Y_1)\\
\dt f(Y_2) \\
\dt f(Y_3)\\
y_{n}\\
\varepsilon_{n}
\end{array} 
\right] \,,
\end{align}
where $\beta=\frac{1}{1-\gamma}$, $\gamma=1/2^p$, and $Y_{\{1,2,3\}}$ are
the $s$-stage vectors corresponding to the original method stacked on
top of each other. This is a method of type 
\eqref{eq:GLM:Global}.

%%%%%%%%%%%%%%%%%%%%%%%%%%%%%%%%%%%%%%%%%%
%
\section{Numerical results\label{sec:Numerical:Exeriments}}
%
%%%%%%%%%%%%%%%%%%%%%%%%%%%%%%%%%%%%%%%%%%
%
In this section we present numerical results with a detailed set
of test problems. 
%
%%%%%%%%%%%%%%%%%%%
%
\subsection{Test problems\label{sec:test:problems}}
%
%%%%%%%%%%%%%%%%%%%
We consider a set of simple but comprehensive test problems.
%
%%%%%%%%%%%%%%%%%%%
%
%\subsubsection{Unstable problems [Prince42] and [Kulikov]\label{sec:problem:prince}}
%
%%%%%%%%%%%%%%%%%%%

Problem [Prince42] is defined in \cite{Prince_1978} (4.2) by
\begin{subequations}
\label{eq:Prince_1978_(4.2)}
\begin{align}
\label{eq:Prince_1978_(4.2):eq}
y'&= y-\sin(\tx)+\cos(\tx)\,,~~ y(0)=\kappa \\
\label{eq:Prince_1978_(4.2):sol}
y(\tx)&= \kappa*\exp(\tx)+\sin(\tx)\,.
\end{align}
\end{subequations}
Here we take $\kappa=0$. As a direct consequence of
\eqref{eq:Prince_1978_(4.2):sol}, we see that that any perturbation of
the solution $y$,  such as numerical errors, leads to exponential
growth. Therefore we have an unstable dynamical system; and even if we
start with $\kappa=0$, numerical errors will lead to an exponential
solution growth. This is a classical example that is used to show
the failure of local error estimation in general and of global error
estimation by using Algorithm [A:ExPrErEq] \eqref{eq:ExactPrincipalError}
\cite{Prince_1978} in particular.
%
%%%%%%%%%%%%%%%%%%%%%%%%%%%%%%%
%
%\subsection{Problem with unstable modes[Kulikov]\label{sec:problem:unstable:modes}}
%
%%%%%%%%%%%%%%%%%%%%%%%%%%%%%%%
%

A similar problem [Kulikov2013I] is defined by Kulikov \cite{Kulikov_2013} by
%\begin{subequations}
%
\begin{align}
\label{eq:Kulikov_2013_I}
y_1'=2 \tx \, y_2^{1/5} \, y_4\,,~
y_2'=10 \tx \exp(5(y_3-1)) \, y_4 \,,~
y_3'=2 \tx \, y_4 \,,~
y_4'=-2 \tx \ln(y_1)\,,
\end{align}
so that $y_1(\tx)=\exp(\sin(\tx^2))$, $y_2(\tx) = \exp(5\sin(\tx^2))$,
$y_3(\tx)=\sin(\tx^2)+1$, $y_4(\tx) =\cos(\tx^2)$.
%\end{align}
%
%\end{subequations}
%
This problem is nonautonomous and exhibits unstable modes later.
%
%%%%%%%%%%%%%%%%%%%%%%%%%%%%%%%
%
%\subsubsection{Long-time integration\label{sec:problem:long:time}}
%
%%%%%%%%%%%%%%%%%%%%%%%%%%%%%%%
%
%

To illustrate the error behavior over long time integration, we
consider problem [Hull1972B4], which is a nonlinear ODE defined in \cite{Hull_1972} (B4) by
%\begin{subequations}
%
\begin{align}
\label{eq:Hull1972_B4}
y_1'=-y_2-\frac{y_1 y_3}{\sqrt{y_1^2+y_2^2}} \,,~
y_2'=y_1-\frac{y_2 y_3}{\sqrt{y_1^2+y_2^2}} \,, ~
y_3'=\frac{y_1}{\sqrt{y_1^2+y_2^2}}  \,,
\end{align}
with $y_0=[3,0,0]^T$.
%\end{subequations}
%

The last problem [LStab2] is used to assess linear stability
properties of the proposed numerical methods.
%%%%%%%%%%%%%%%%%%
%
%\subsection{Linear stability [LStab2]\label{sec:problem:lin:stab}}
%
%%%%%%%%%%%%%%%%%
%
\begin{subequations}
\label{eq:lstab2}
\begin{align}
\label{eq:lstab2:eq}
&y'=A y\,,~ y(0)=[y_1(0),y_2(0)]^T\,,~A=\left[ \begin{array}{cc} a & -b \\ b & a  \end{array} \right] \,,~ \Lambda(A) = \{a+ib, a-ib\}\,,\\
\label{eq:lstab2:sol}
&\left\{ \begin{array}{l} y_1(\tx) = \exp(at) \left(y_2(0) \cos(b\tx) - y_1(0) \sin(b\tx) \right)\\
y_2(\tx) = \exp(at) \left(y_1(0) \cos(b\tx) + y_2(0) \sin(b\tx) \right)
\end{array} \right.\,.
\end{align}
\end{subequations}
This problem allows one to choose the position of the eigenvalues of the Jacobian,
$\Lambda(A)$, in order to simulate problems with different spectral properties.

%%%%%%%%%%%%%%%%%%%
%
\subsection{Numerical experiments\label{sec:test:problems:experiments}}
%
%%%%%%%%%%%%%%%%%%%

We begin with simple numerical experiments that show when local error
control without global error estimates is not suitable. We therefore
compare the result of well-tuned numerical 
integrators that use local error control with the global error
estimates for the same problem. We use Matlab's
{\rm ode45} integrator with different tolerances whenever we refer to
methods with local error estimation. 

In Fig. \ref{fig:unstable} we show the errors over time for problems
[Prince42] \eqref{eq:Prince_1978_(4.2)} and [Kulikov2013I]
\eqref{eq:Kulikov_2013_I}. These problems are solved by using LEE methods,
Figs. \ref{fig:unstable:a}-\ref{fig:unstable:b}, and GEE methods, \eqref{GLM:E:s2:p2:RK:A2} and
\eqref{eq:GLMyy:A9}, Figs. 
\ref{fig:unstable:c}-\ref{fig:unstable:d}; furthermore, the other GEE
methods introduced above give similar results. The
absolute error tolerance for LEE control is set to 1e-02. The
methods with LEE systematically underestimate the global error levels as
expected, whereas the methods with GEE capture the errors
relatively well. Moreover, the global errors are captured well across
components, as shown in Fig. \ref{fig:unstable:d}. The LEE results
are intended to show unreliability in accuracy; however, this setting
is not suitable for comparing efficiency between LEE and GEE because
LEE methods are not designed to account for or estimate the global
error. The point of GEE methods is that they provide global error
estimates whereas LEE methods do not; however, in this case the GEE
methods do not perform step-size adaptivity to improve efficiency, as
is done with the LEE methods.

%%%%%%%%%%%%%%%%%%%
%
%\subsubsection{Global and local error\label{sec:test:problems:global:local}}
%
%%%%%%%%%%%%%%%%%%%

\begin{figure}
\begin{center}
\subfigure[{[Prince42] \eqref{eq:Prince_1978_(4.2)} with LEE}]{\includegraphics[width=0.45\textwidth]{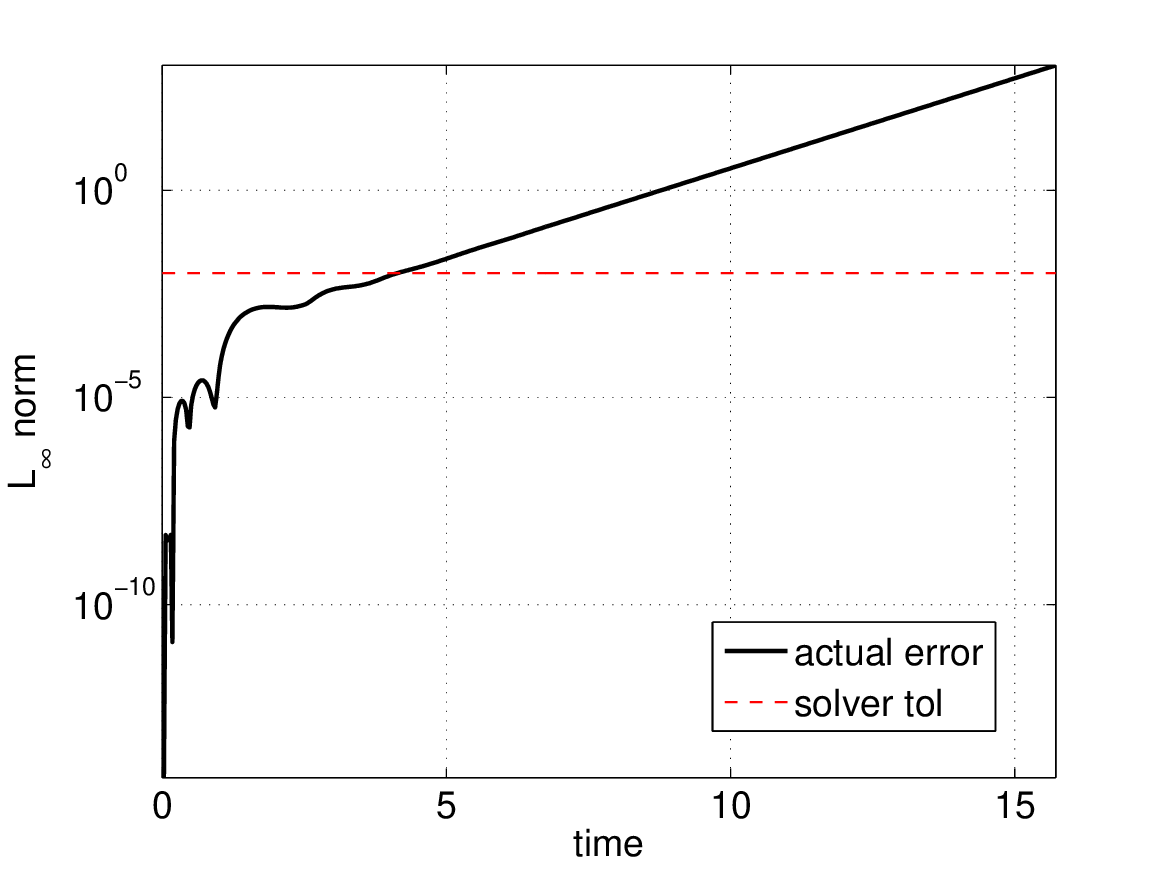}\label{fig:unstable:a}}
\subfigure[{[Kulikov2013I] \eqref{eq:Kulikov_2013_I} with LEE}]{\includegraphics[width=0.45\textwidth]{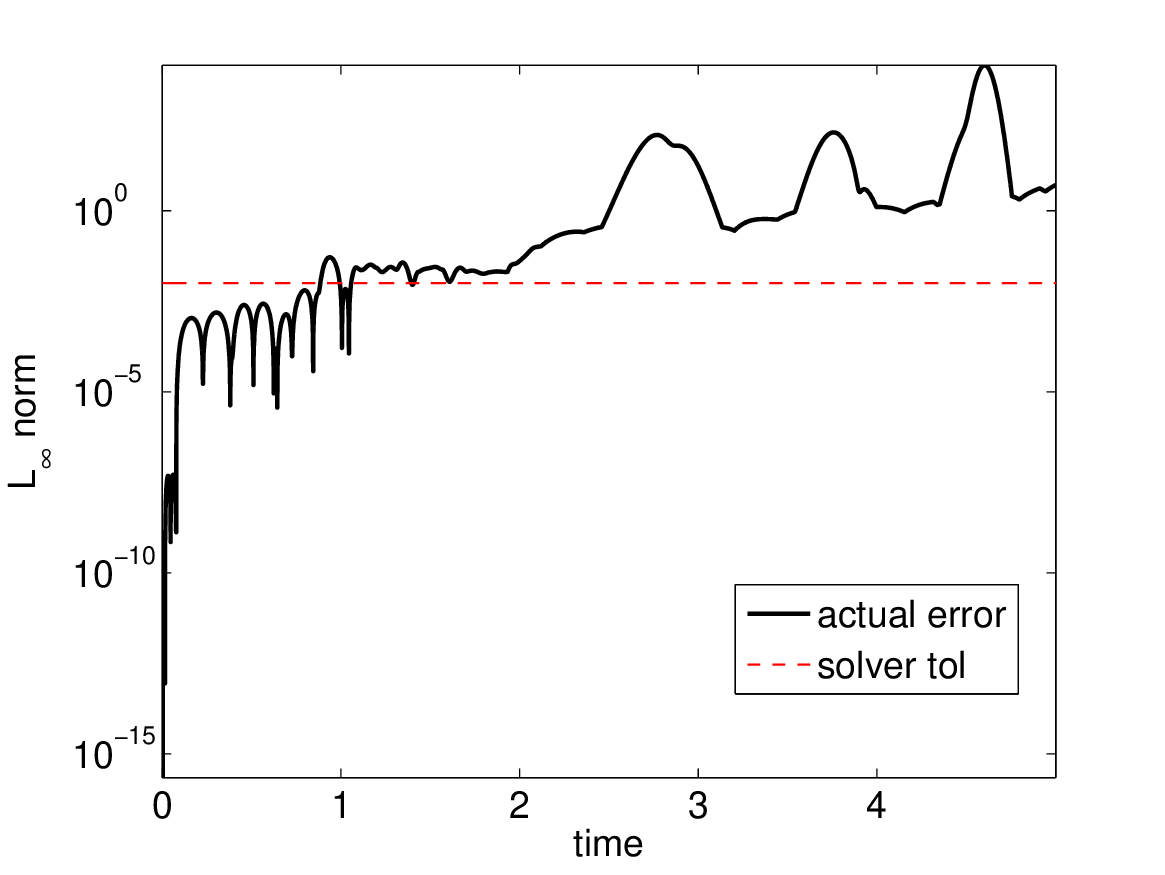}\label{fig:unstable:b}}
\subfigure[Problem (a) with GEE]{\includegraphics[width=0.45\textwidth]{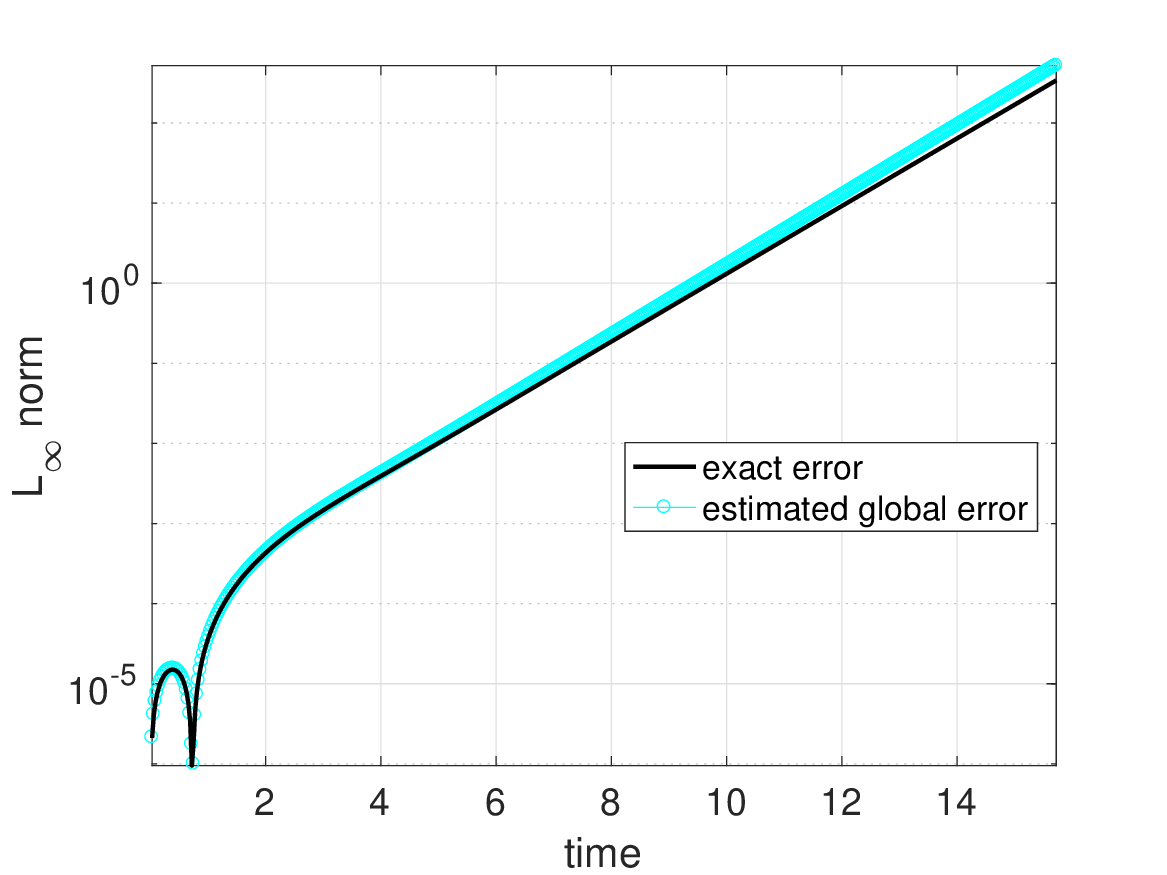}\label{fig:unstable:c}}
\subfigure[Problem (b) with GEE, by components]{\includegraphics[width=0.45\textwidth]{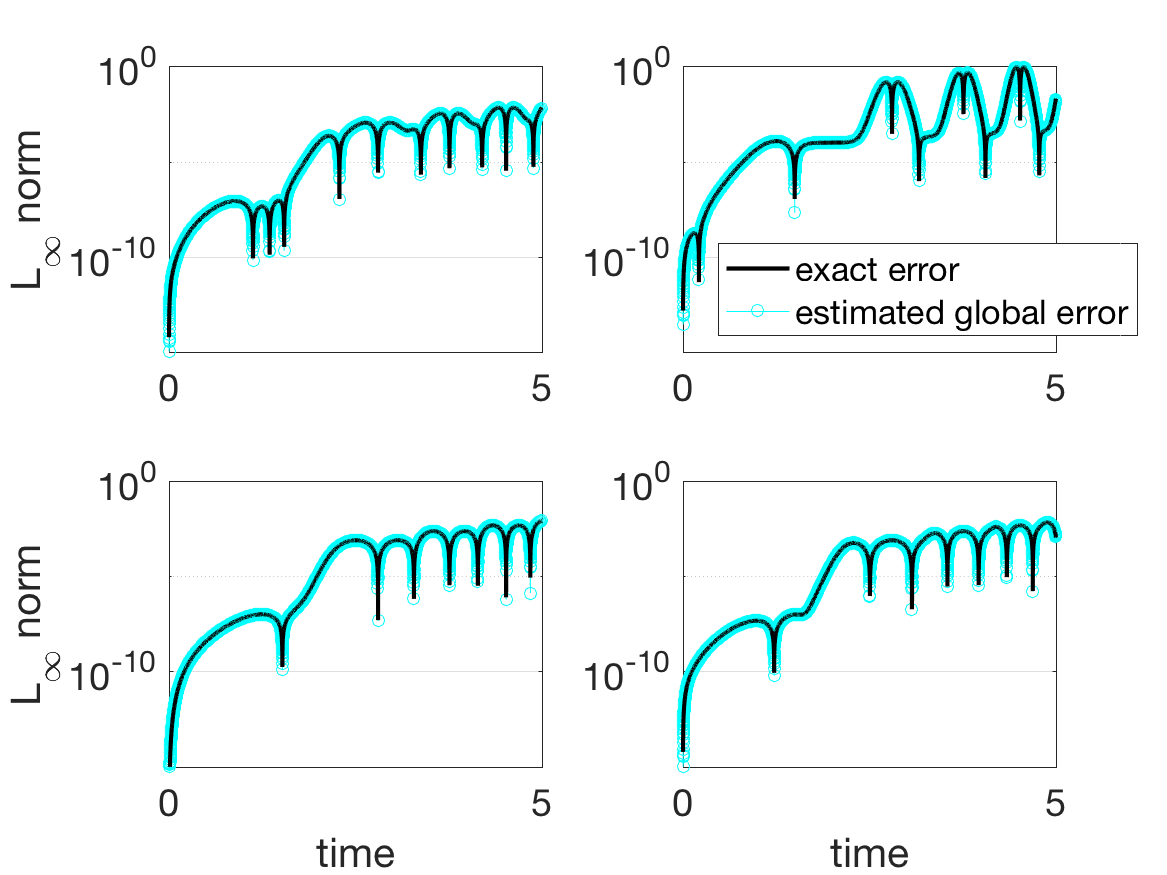}\label{fig:unstable:d}}
\end{center}
\caption{Errors when solving problems with unstable modes by using local
  estimated error (LEE) control and global error estimation (GEE).  The
  absolute error tolerance for LEE control is set to 1e-02. The GEE
  method used here is \eqref{GLM:E:s2:p2:RK:A2} with $\dt=3e-02$ in
  (c) and \eqref{eq:GLMyy:A9} with $\dt=5e-04$ in (d). GEE captures the
  errors well, whereas LEE underestimates them severely.\label{fig:unstable}} 
\end{figure}
%
%%%%%%%%%%%%%%%%%%%
%
%\subsubsection{Long-term integration\label{sec:test:problems:long:term}}
%
%%%%%%%%%%%%%%%%%%%
%

In Fig. \ref{fig:long:term} we show the error behavior for problem
[Hull1972B4] \eqref{eq:Hull1972_B4} when long integration windows are
considered. For LEE we set the absolute tolerance to 1e-05. In this case
we observe an error drift to levels of 1e-03 over 1,000 time
units. The method with GEE
\eqref{eq:GLMyy:A9} can follow closely the error in
time. We note that methods \eqref{GLM:E:s2:p2:RK:1} and 
  \eqref{GLM:E:s2:p2:RK:A2} with $[\cancel{\GLMB\GLMA\GLMU}]$ do not
  perform well. Therefore, the built-in error estimator in the GEE method to
maintain its accuracy over time, we need to decouple the two embedded
schemes further by ensuring that $\GLMB\GLMA\GLMU$ is a diagonal
matrix; moreover, both GEE methods that satisfy this property
perform well on this test case. This aspect is observed for all
methods introduced here, discussed further below, and illustrated in
Fig. \ref{fig:cond:error:error}. 

\begin{figure}
\begin{center}
\subfigure[LEE for \eqref{eq:Hull1972_B4}]{\includegraphics[width=0.32\textwidth]{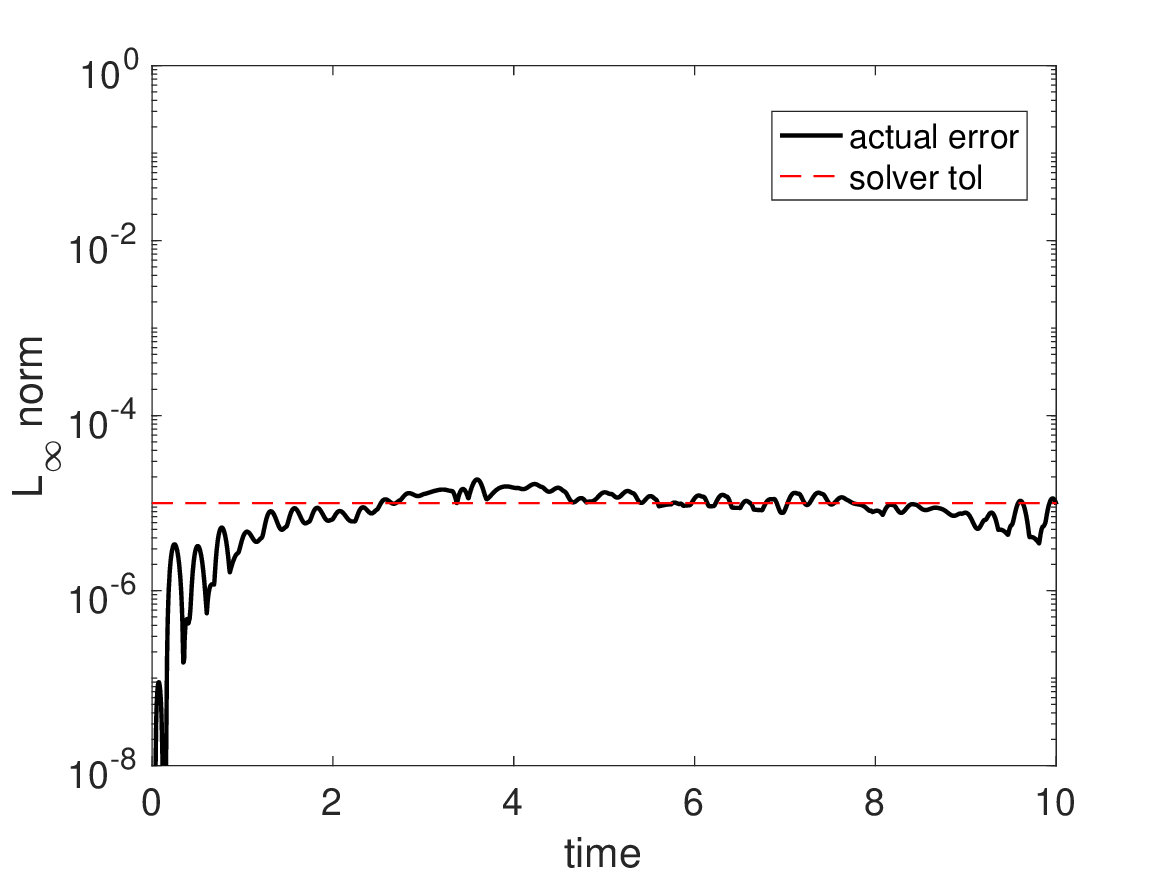}\label{fig:long:term:a}}
\subfigure[LEE for long times]{\includegraphics[width=0.32\textwidth]{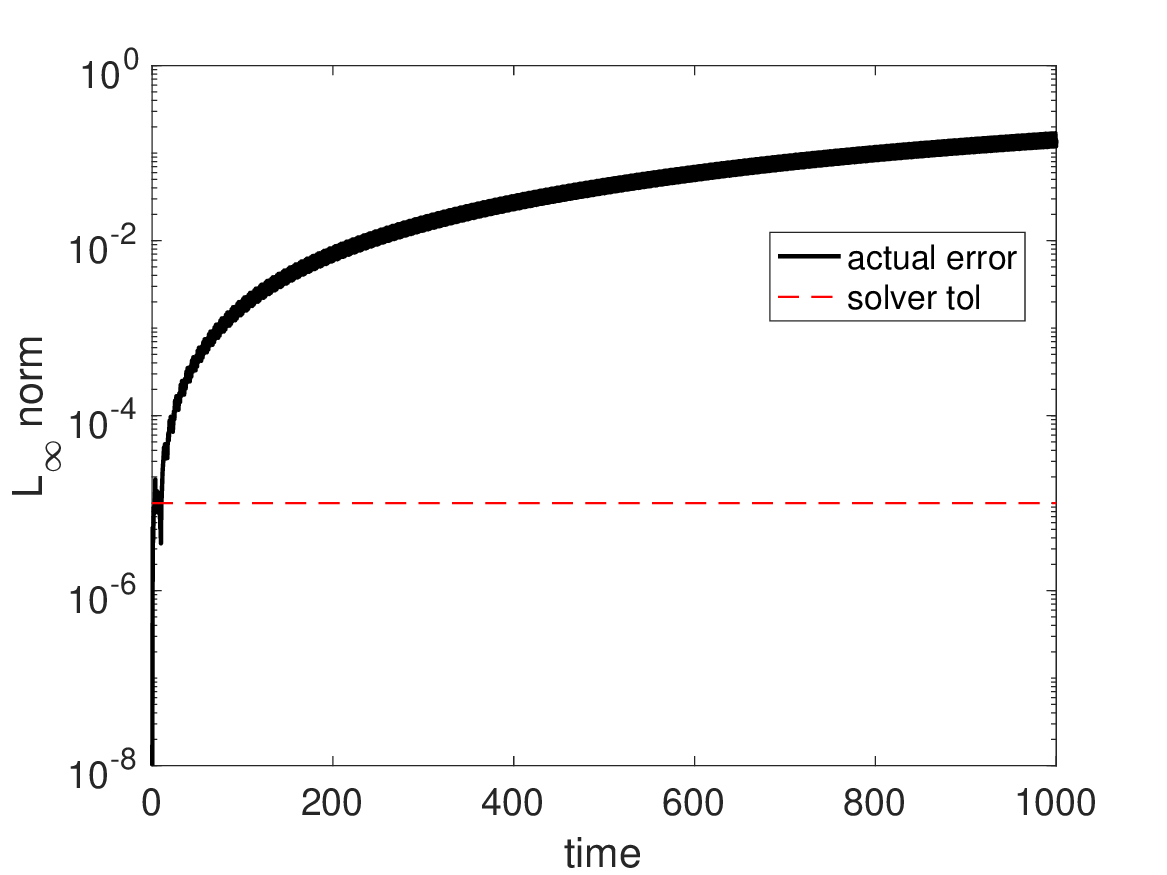}\label{fig:long:term:b}}
\subfigure[GEE for long times]{\includegraphics[width=0.32\textwidth]{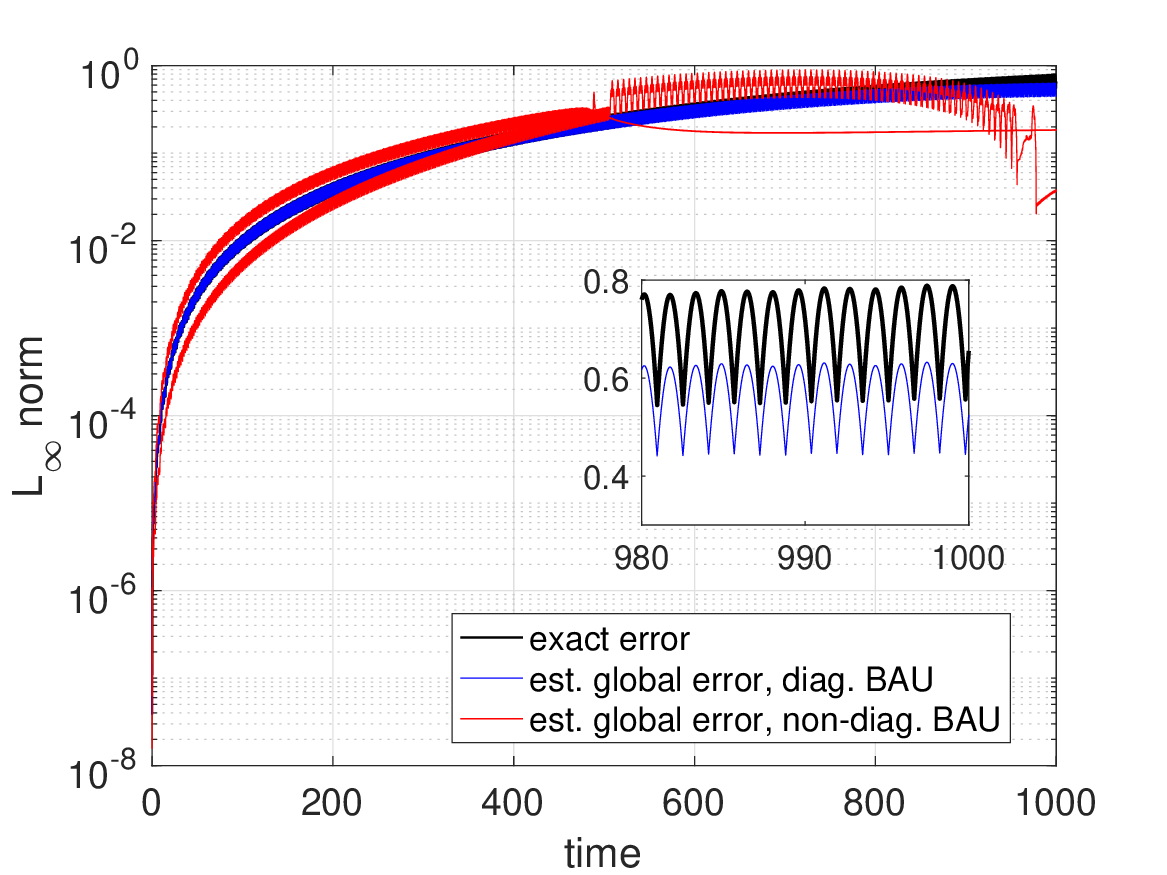}\label{fig:long:term:c}}
\end{center}
\caption{Errors when solving [Hull1972B4] \eqref{eq:Hull1972_B4} with
  LEE and GEE. For LEE we set the absolute tolerance to 1e-05.  {\rm (a)} During
  short integration times LEE satisfies the error tolerance
  well.  {\rm (b)} For longer times, however, we see an expected drift to error
  levels of 1e-03.  {\rm (c)} GEE method with $\dt=0.005$ and diagonal $\GLMB\GLMA\GLMU$ \eqref{eq:GLMyy:A9} gives accurate error estimates even over
  long times, while GEE with non-diagonal $\GLMB\GLMA\GLMU$
  \eqref{GLM:E:s2:p2:RK:1} and 
  \eqref{GLM:E:s2:p2:RK:A2} do not.\label{fig:long:term}} 
\end{figure}
%

%%%%%%%%%%%%%%%%%%%
%
%\subsubsection{Convergence and stability analysis\label{sec:test:problems:convergence:stability}}
%
%%%%%%%%%%%%%%%%%%%

\paragraph{Convergence analysis} We next analyze the convergence
properties of the methods discussed 
here. In Fig. \ref{fig:nr:conv} we show the convergence of the
solution and of the error estimate. Here we illustrate the convergence
of GEE methods of orders 2  \eqref{GLM:E:s2:p2:RK:A4} and 3
\eqref{eq:GLyy:third:order}  for problem [Prince42]
\eqref{eq:Prince_1978_(4.2)}. All GEE methods introduced in this study
converge with their 
prescribed order; moreover, the error estimate also converges with
order $p+1$, as expected from \eqref{eq:high:order:estimate}.  
In Fig. \ref{fig:nr:conv:principal:error} we show the behavior of
global error estimation when  using methods satisfying the exact principal error equation. Here we use method \cite[(3.11)]{Prince_1978}, which fails to
capture the error magnitude as discussed in \cite{Prince_1978} because
the estimated error is several orders of magnitude smaller than the
true global error. 
\begin{figure}
\begin{center}
\subfigure[Second-order method \eqref{GLM:E:s2:p2:RK:A4}]{\includegraphics[width=0.45\textwidth]{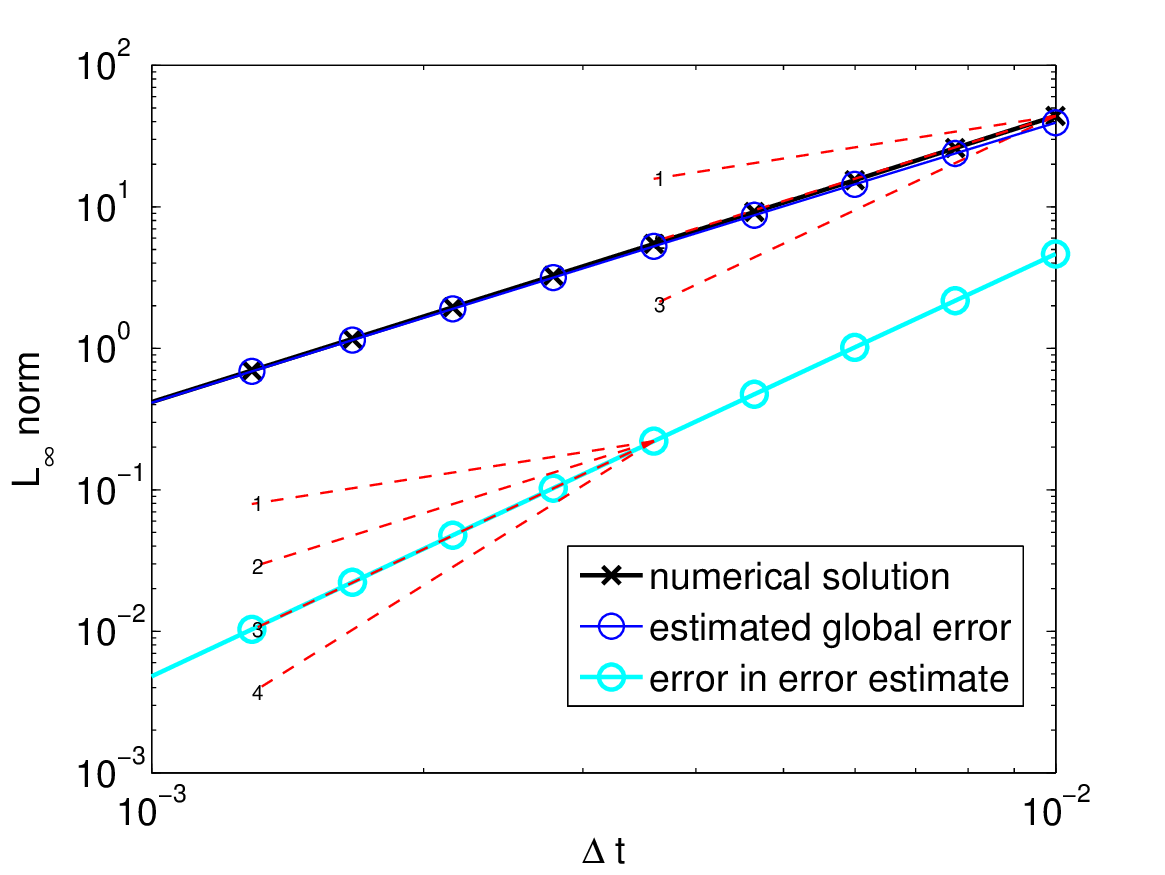}\label{fig:nr:conv:a}}
\subfigure[Third-order method  \eqref{eq:GLyy:third:order}]{\includegraphics[width=0.45\textwidth]{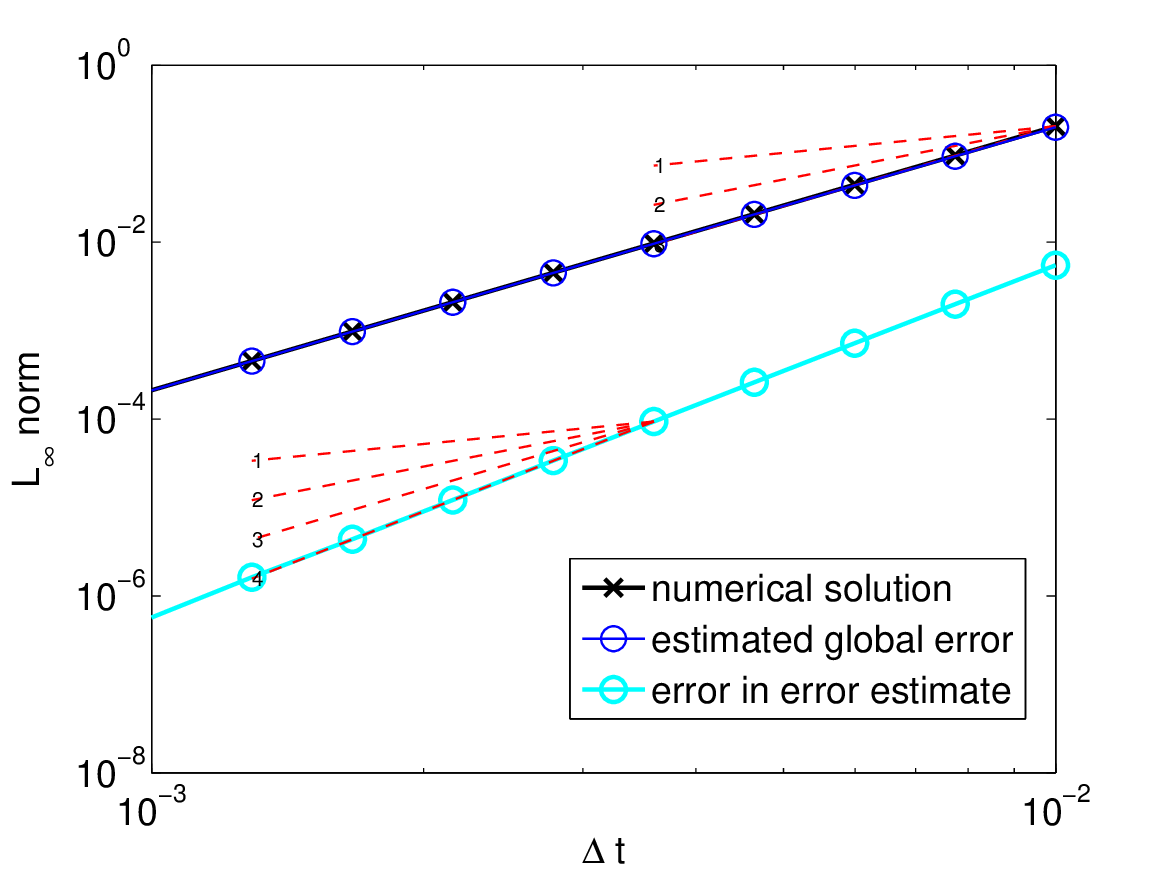}\label{fig:nr:conv:b}}
\end{center}
\caption{Convergence of GEE methods for problem [Prince42]
 \eqref{eq:Prince_1978_(4.2)}. In  {\rm (a)}  we show the error in the
  solution obtained by the second-order GEE method \eqref{GLM:E:s2:p2:RK:A4} and the
  estimated global error, which follows it closely, as well as the
  difference between the true error and the error estimate. An
  asymptotic guide is provided by the red dashed lines.  {\rm (b)}
  This is the same as {\rm (a)} but using the third-order method
  \eqref{eq:GLyy:third:order}. \label{fig:nr:conv}} 
\end{figure}
\begin{figure}
\begin{center}
\subfigure[{Error estimation for methods with exact principal error equation for problem [Prince42]}]{\includegraphics[width=0.45\textwidth]{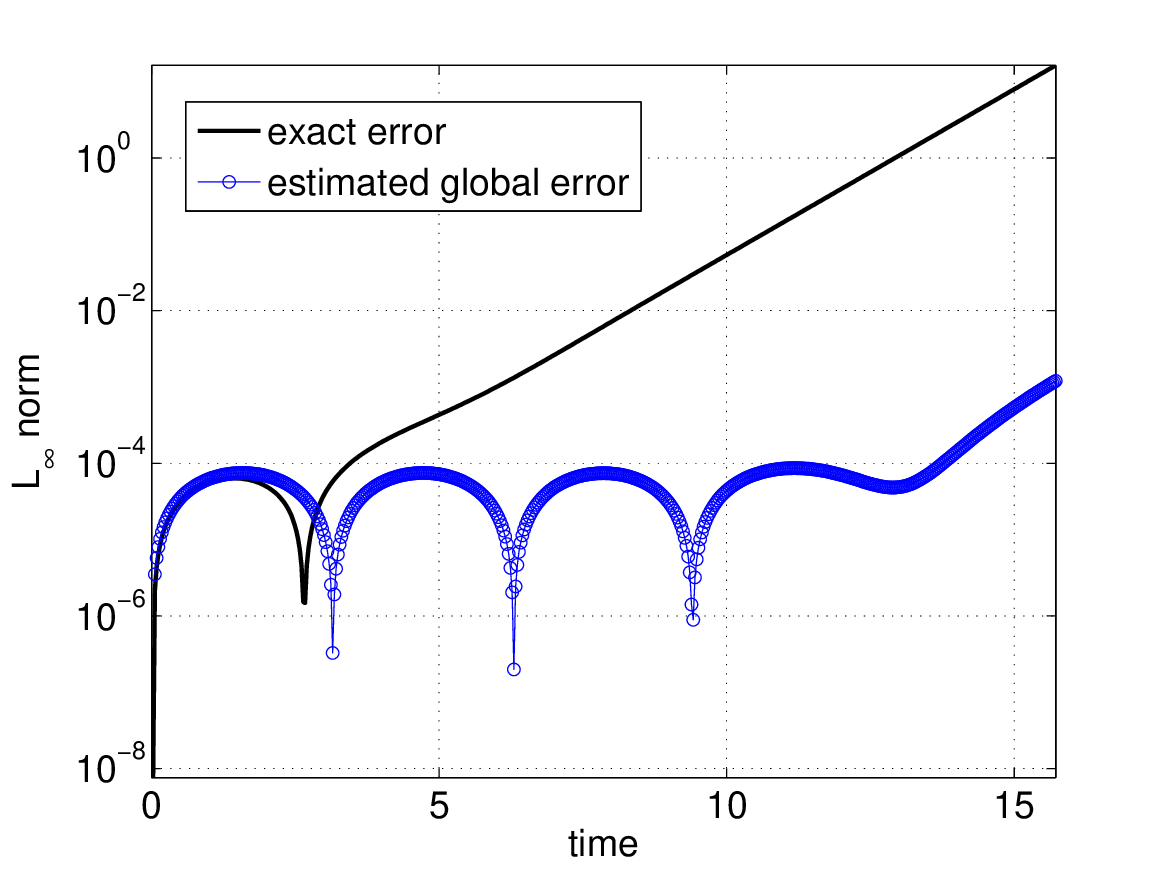}\label{fig:nr:conv:principal:error:a}}
\subfigure[{Method \cite[(3.11)]{Prince_1978}}]{\includegraphics[width=0.45\textwidth]{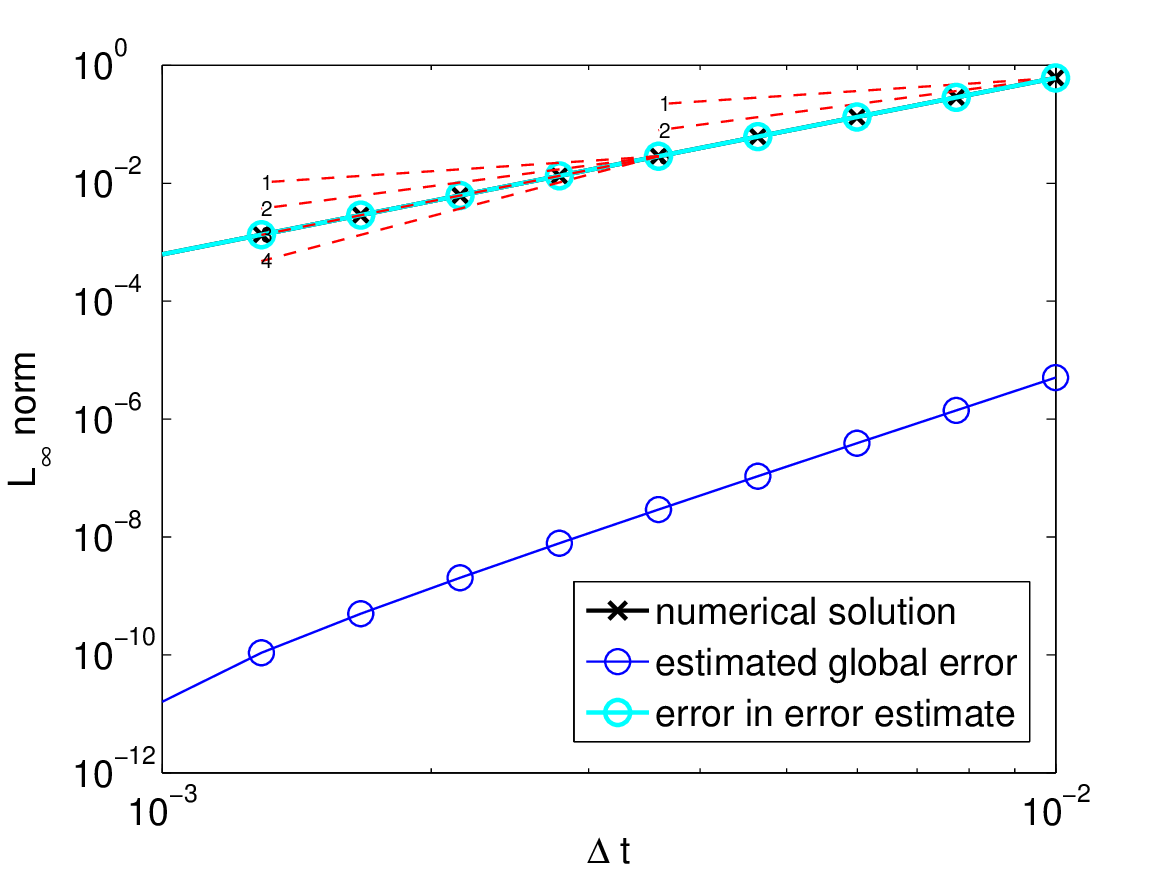}\label{fig:nr:conv:principal:error:b}}
\caption{ {\rm (a)} Failure to capture the
  global errors correctly for problem [Prince42]
 \eqref{eq:Prince_1978_(4.2)} when using a method with exact error equation
 such as \cite[(3.11)]{Prince_1978} with $\dt=3e-02$ and  {\rm (b)}
 its convergence analysis. 
\label{fig:nr:conv:principal:error}}    
\end{center}
\end{figure}

In Fig. \ref{fig:cond:error:error} we illustrate the effect of having
different coupling conditions satisfied. In particular, if $\GLMB\GLMU$
is not diagonal, the GEE may not converge at
all (see Fig. \ref{fig:cond:error:a}). This method is obtained by
replacing the first row of $\GLMB_{y\tilde{y}}$ by [0,0,1] and
$\GLMU_{y\tilde{y}}=\scriptsize{\begin{bmatrix} -4 & 1 & 3/4 \\ 5 & 0 & 1/4 \end{bmatrix}}^T$ in
\eqref{GLM:E:s2:p2:RK:1:GLyy}. If $\GLMB\GLMU$ is diagonal but
$\GLMB\GLMA\GLMU$ is not, the method may converge more slowly to the
asymptotic regime as the high-order error coupling terms become
negligible (see Fig. \ref{fig:cond:error:b}). In
Fig. \ref{fig:cond:error:c}, we show the convergence for the same
problem when both $\GLMB\GLMU$ and $\GLMB\GLMA\GLMU$ are diagonal. 
\begin{figure}
\begin{center}
\subfigure[similar to \eqref{GLM:E:s2:p2:RK:1:GLyy}, $\cancel{\GLMB\GLMU}$, $\cancel{\GLMB\GLMA\GLMU}$]{\includegraphics[width=0.30\textwidth]{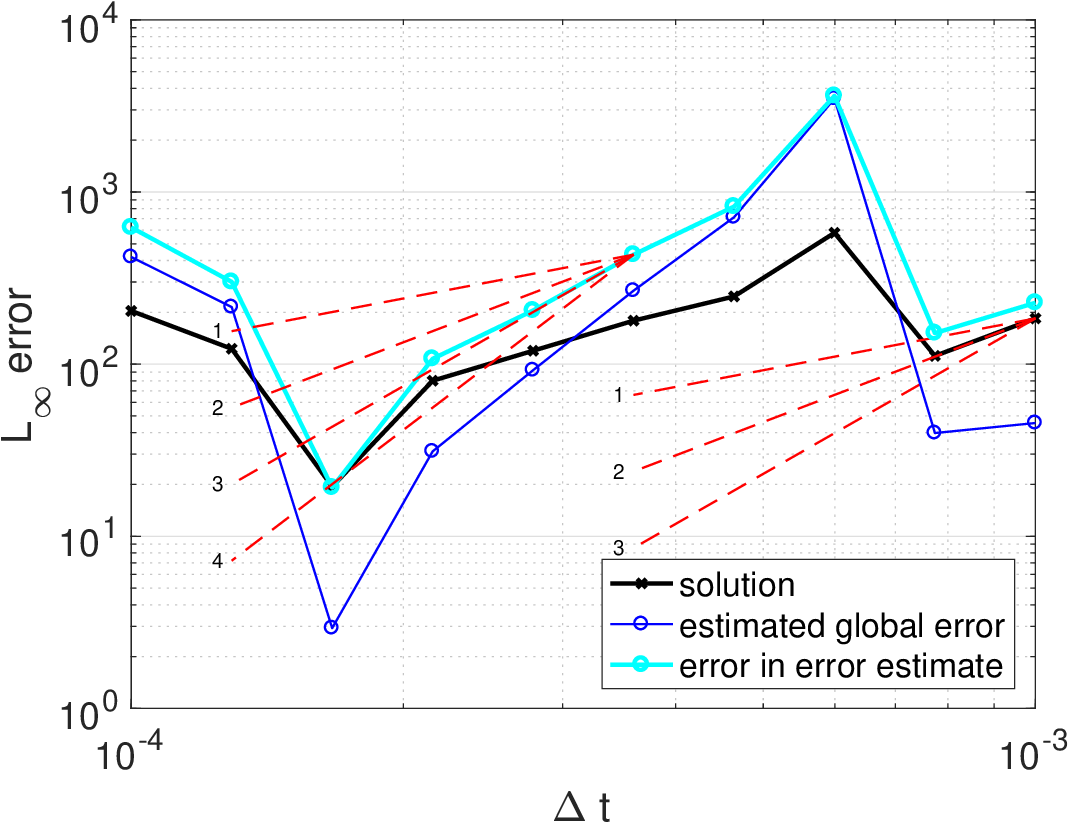}\label{fig:cond:error:a}}
\subfigure[\eqref{GLM:E:s2:p2:RK:A2}, $\GLMB\GLMU$, $\cancel{\GLMB\GLMA\GLMU}$]{\includegraphics[width=0.30\textwidth]{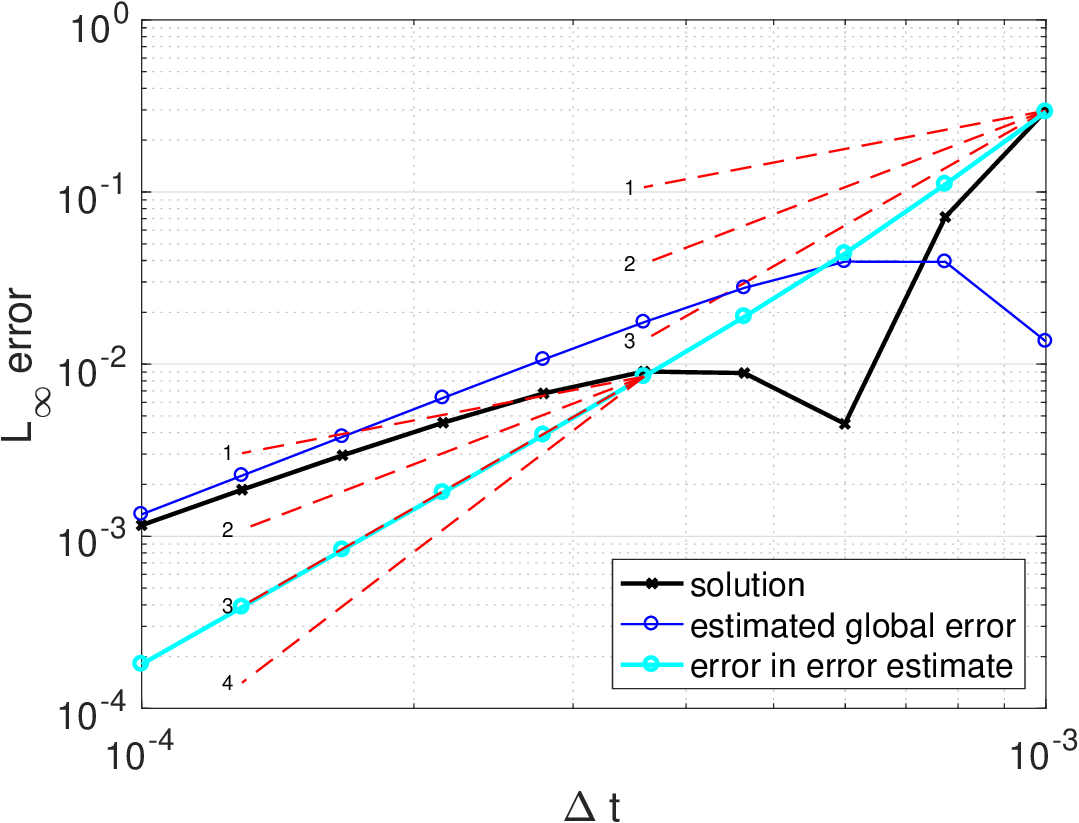}\label{fig:cond:error:b}}
\subfigure[\eqref{eq:GLMyy:A9}, $\GLMB\GLMU$, $\GLMB\GLMA\GLMU$]{\includegraphics[width=0.30\textwidth]{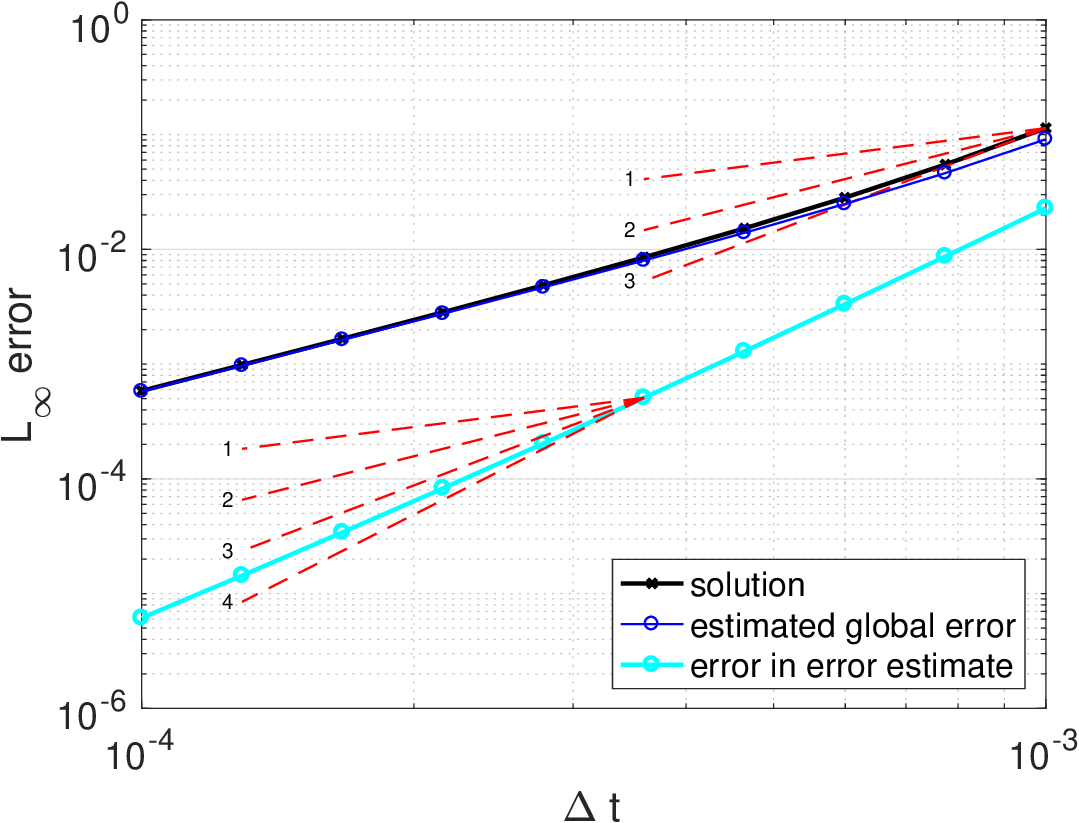}\label{fig:cond:error:c}}
\caption{Convergence of second-order GEE methods with different
  decoupling properties for [Kulikov2013I]. {\rm (a)} Not satisfying $\GLMB\GLMU$ leads to a lack of convergence. {\rm (b)}
  Satisfying just $\GLMB\GLMU$ but not 
  $\GLMB\GLMA\GLMU$ may lead to slower convergence.  {\rm (c)}
  Satisfying both $\GLMB\GLMU$ or
  $\GLMB\GLMA\GLMU$ leads to consistent convergence properties.
\label{fig:cond:error:error}}    
\end{center}
\end{figure}

\paragraph{Linear stability} We next look at the linear stability properties of the methods
introduced in this study. In Fig. \ref{fig:nr:stab:a} we delineate the
stability regions according to \eqref{eq:GL:StabMatrix}. In
Fig. \ref{fig:nr:stab:b} we show numerical results for problem
[LStab2] \eqref{eq:lstab2} with $\lambda \dt
=\{\frac{1}{4},\frac{1}{2},\frac{3}{4},1\}\times(-1 \pm \sqrt{-1})$
when using method \eqref{GLM:E:s2:p2:RK:A2}. As expected, all
solutions except the one corresponding to $\lambda \dt = -1 \pm
\sqrt{-1}$ are 
stable, as can be interpreted from Fig. \ref{fig:nr:stab:a}. 
\begin{figure}
\begin{center}
\subfigure[Stability regions for second-order methods
  \eqref{GLM:E:s2:p2:RK:1:det:GLyy}, \eqref{GLM:E:s2:p2:RK:A2}, \eqref{GLM:E:s2:p2:RK:A4}, 
  and third-order \eqref{eq:GLyy:third:order}]{\includegraphics[width=0.43\textwidth]{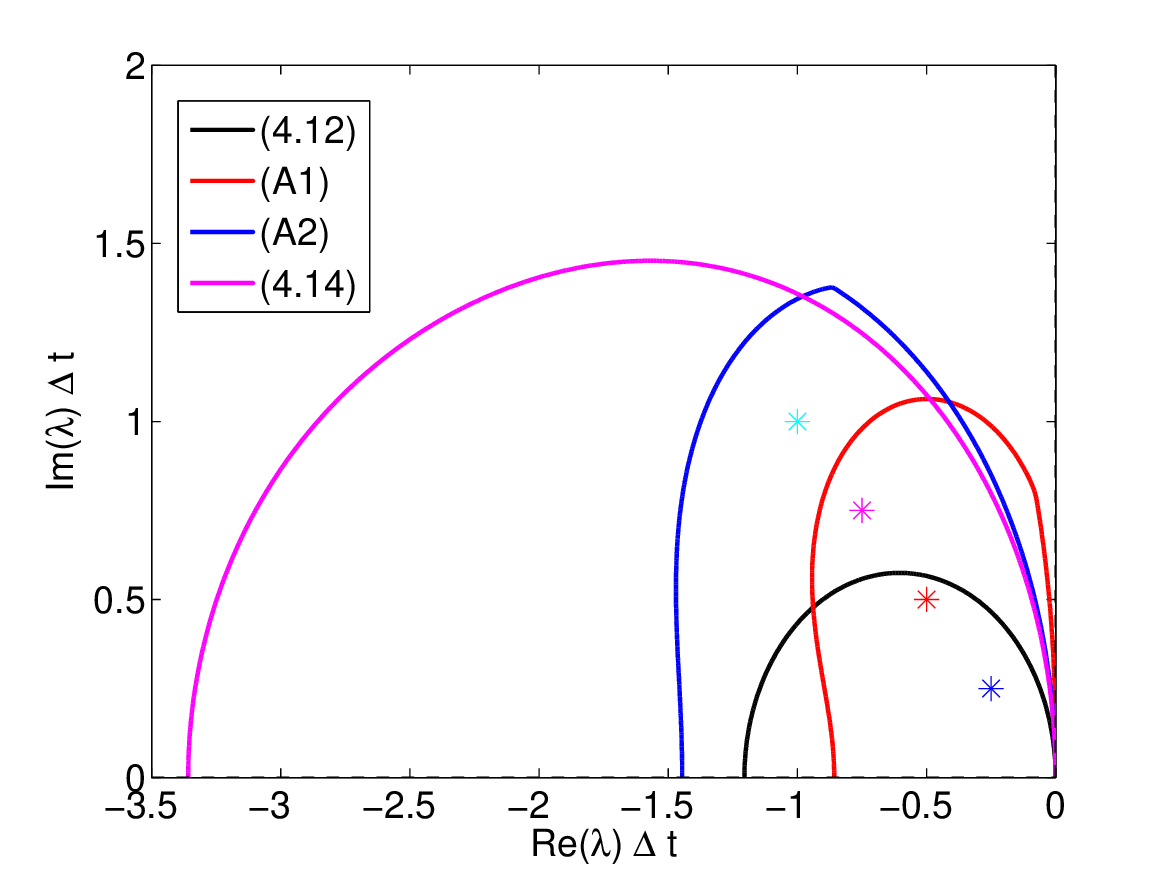}\label{fig:nr:stab:a}}
\subfigure[{Solution of [LStab2] \eqref{eq:lstab2} when using
  method \eqref{GLM:E:s2:p2:RK:A2}}]{\includegraphics[width=0.43\textwidth]{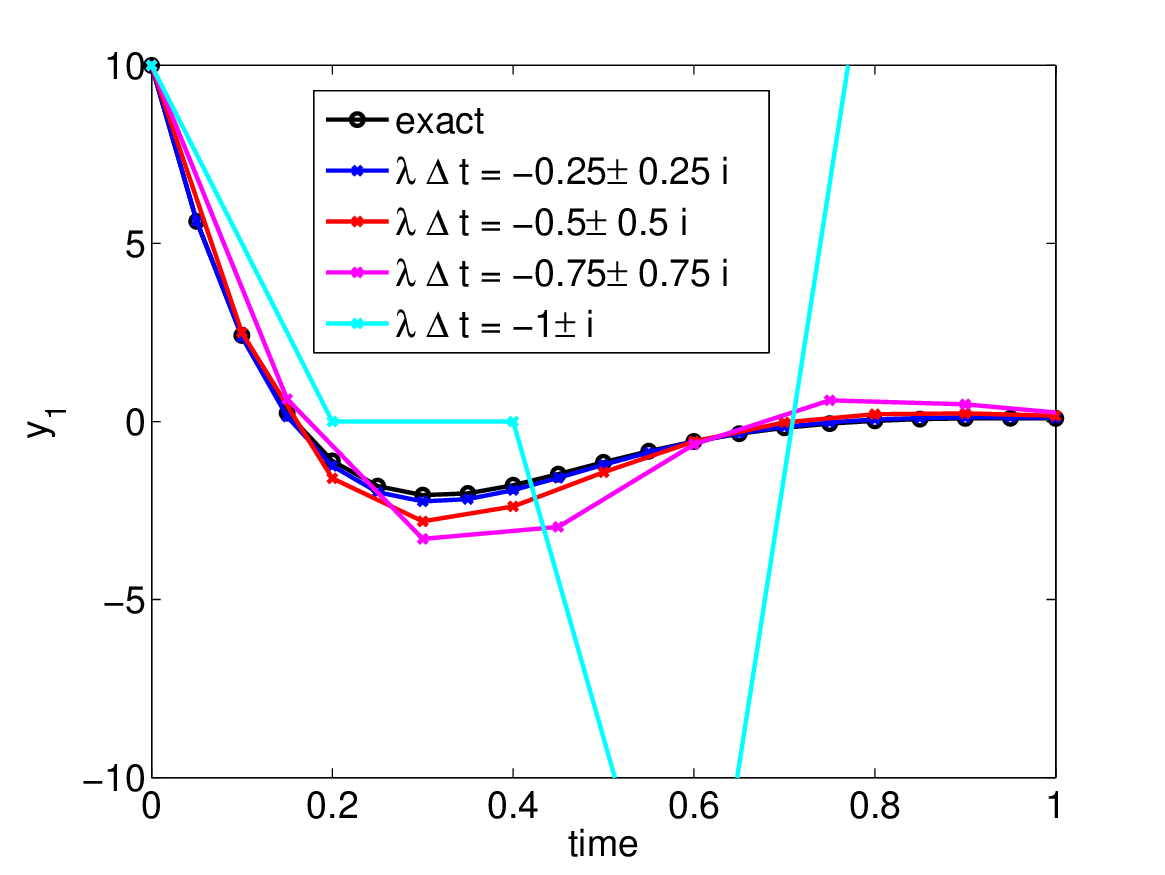}\label{fig:nr:stab:b}}
\end{center}
\caption{Linear stability regions for four  methods introduced
  in this  study  {\rm (a)}; and the solution of method
  \eqref{GLM:E:s2:p2:RK:A2} for problem  [LStab2] \eqref{eq:lstab2}
  in  {\rm (b)} with different time steps. Spectral radius of
  \eqref{GLM:E:s2:p2:RK:A2} is indicated in  {\rm (a)} with marker
  $*$ of the same color. Solutions in {\rm (b)} are stable except the one (cyan line)
  for which the eigenvalues are outside the corresponding stability
  region (red line) in {\rm (a)}.\label{fig:nr:stab}} 
\end{figure}

\paragraph{Adaptive time stepping and practical considerations} Two practical situations are illustrated next: $(i)$ a situation in
which local error control is used to adapt the 
time steps and the global error estimate is used to validate the
solution accuracy and $(ii)$ a situation in which a  prescribed level
of accuracy is needed, which requires recomputing the solution with a
different time step. In the first case we consider a simple local
error control using local error estimates \eqref{eq:GE:GL:local:error}
provided by GEE \eqref{eq:GLyy:third:order}. The local
  error control is achieved by using standard approaches \cite[II.4]{Hairer_B2008_I}. In
Fig. \ref{fig:nr:err:adapt} we show the error evolution [Kulikov2013I]
while attempting to restrict the local error to be around 1e-05. The
time-step length is allowed to vary between 1e-03 and 1e-05. Note that
the global error estimate remains faithful under the time-step change
as expected based on the discussion at the end of Sec. \ref{sec:Global:Errors} 
%Sec. \ref{sec:variable:time:step} 
following
\eqref{eq:global:error:ode:VarT}. We also show that summing the local
errors via cumulative sum (cumsum) yields precisely the global error
as suggested by \eqref{eq:GE:GL:local:error}.   
Similar conditions are used in the second case in which the
target is to satisfy a prescribed tolerance. In a first run the time
step is fixed to 0.0001, and an integration is carried out to the final
time. The global error and the asymptotic are used to determine the time
step that would guarantee a global error of 1e-04. A simple asymptotic analysis
indicates that a time step of 0.00013 needs to be used in order to achieve
that target for GEE \eqref{eq:GLyy:third:order}. The solution is
recomputed by using this smaller fixed time-step, and the global errors
shown in Fig. \ref{fig:nr:err:control} satisfy the prescribed global
accuracy. We note that this case is just an illustration of the
theoretical properties of the methods introduced here. It is likely
a suboptimal strategy for achieving a prescribed global tolerance.

\begin{figure}
\begin{center}
\subfigure{\includegraphics[width=0.46\textwidth]{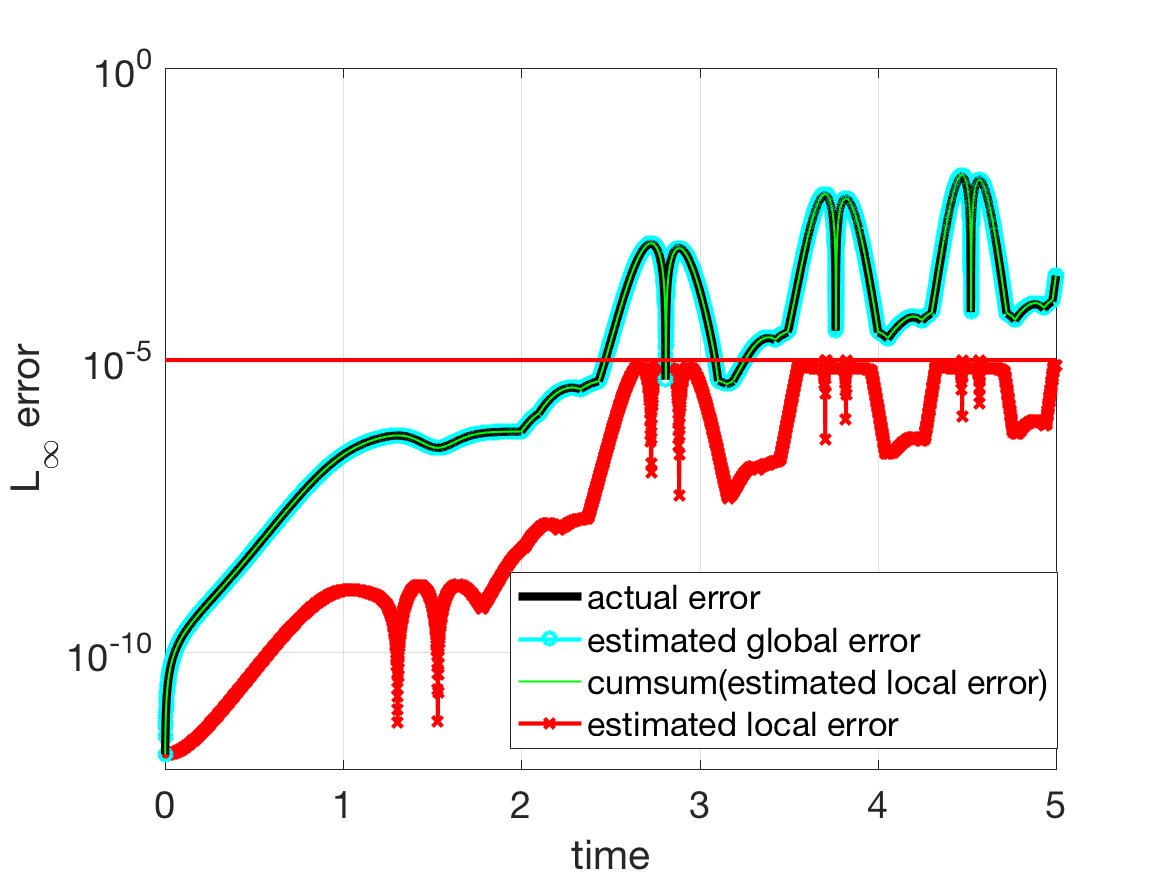}\label{fig:nr:err:adapt:a}}
\subfigure{\includegraphics[width=0.42\textwidth]{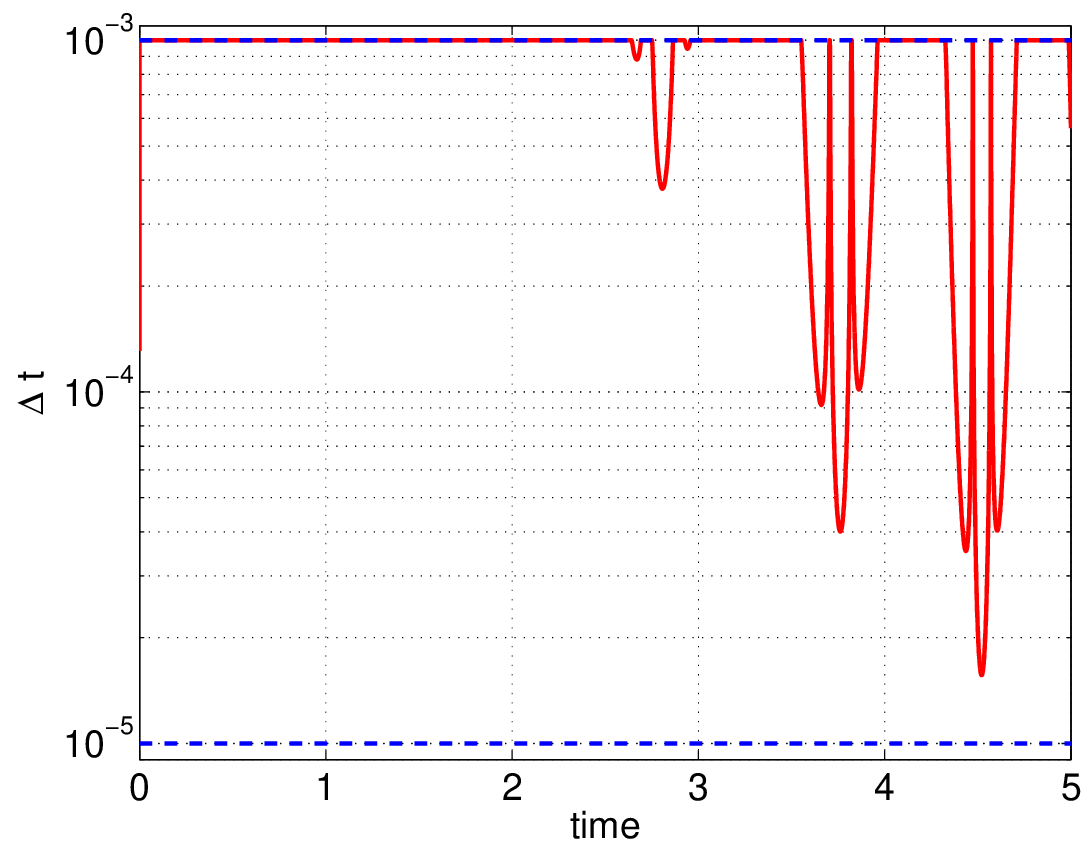}\label{fig:nr:err:adapt:b}}
\end{center}
\caption{Time-step adaptivity using a local error controller. The GEE
  method \eqref{eq:GLyy:third:order} is used to integrate problem
  [Kulikov2013I] by restricting local error to be less than
  1e-05. The error evolution is shown on the left, and the time-step
  length is shown on the right panel. The
time-step length is allowed to vary between 1e-03 and
1e-05.\label{fig:nr:err:adapt}}
\end{figure}
\begin{figure}
\begin{center}
\subfigure[]{\includegraphics[width=0.44\textwidth]{error-all-0.001-Kulikov-2013-I-GLM-A7-globa-0.0001.eps}\label{fig:nr:err:control:a}}
\subfigure[]{\includegraphics[width=0.44\textwidth]{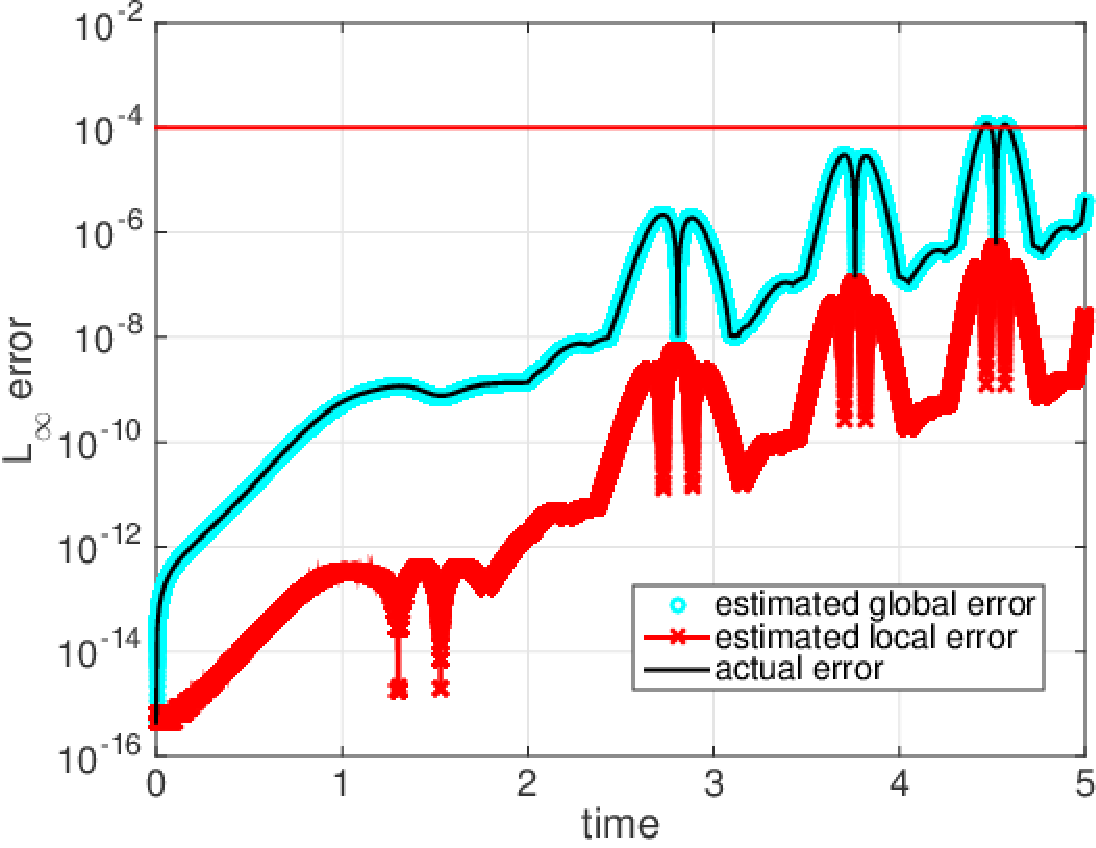}\label{fig:nr:err:control:b}}
\end{center}
\caption{Error evolution for [Kulikov2013I] when using the GEE
  method \eqref{eq:GLyy:third:order} with $\dt=0.001$ (left) and $\dt=
  0.00013$ (right). The smaller time step has been calculated based
  on the GEE global error estimate to keep the global errors below
  1e-04.\label{fig:nr:err:control}}
\end{figure}
%

%%%%%%%%%%%%%%%%%%%%%%%%%%%%%%%%%%%%%%%%%%
%
\section{Discussion\label{sec:discussion}}
%
%%%%%%%%%%%%%%%%%%%%%%%%%%%%%%%%%%%%%%%%%%

In this study we introduce a new strategy for global error estimation
in time-stepping methods. This strategy is based on advancing in time
the solution along with the defect or, equivalently, two solutions that
have a fixed relation between their truncation errors. The main idea
is summarized in Theorem \ref{prop:two:methods}, and practical
considerations are brought up by Proposition
\ref{prop:GL:independence}. This translates in a particular relation
among the truncation errors of GL outputs and in a decoupling
constraint. We note that this strategy can be seen as 
a generalization of the procedure for solving for the correction and of
several others from the same class. We provide equivalent
representation of these methods in the proposed GL form
\eqref{eq:GLM:Global}.

We have explored several algorithms in this study. The methods
[A:ExPrErEq] with exact principal error equation \eqref{eq:ExactPrincipalError}
\cite{Stetter_1971} are attractive because they offer global
error estimates extremely cheaply; however, they were shown in
\cite{Prince_1978} to be unreliable as illustrated in
Fig. \ref{fig:nr:conv:principal:error}. Strategies that directly
solve the error equation, such as [A:SoErEq] \eqref{eq:GE:LangVerwer}, need a
reliable way of estimating the local errors and the availability of
the Jacobian. We found these methods to be robust, especially
the strategy proposed in \cite{Lang_2007} for low-order methods. The
procedure for solving for the correction [A:SolCor]
\eqref{eq:Solving:Correction:ODE} is arguably one of the most popular
approaches for global error estimation. It is related to [A:ZaPr] and
[A:SoErEq], as discussed, and a 
particular case of this approach is introduced in this study. The
extrapolation algorithm [A:Ex] \eqref{eq:GE:Extrap} is the most robust;
however, it is also the most expensive and also a particular case of
[A:GLMGEE]. With respect to global error control, Kulikov and Weiner
\cite{Kulikov_2013,Weiner_2014} report very promising results. 

The methods introduced here are based on a general linear
representation. The particular case under study is given by form
\eqref{eq:GLM:Global}; however, the analysis is not restricted to that 
situation. Particular instances of second and third order are
presented throughout this study. The error estimates can be used for
error control; however, in this study we do not address this
issue. The GL representation allows stages to be reused or shared
among the 
solutions that are propagated within. This leads to methods having
lower costs. Moreover, the stability analysis is simplified when
compared with global error strategies that use multiple formulas or
equations. We consider only GL methods with two solutions, but this
concept can be extended to having multiple values propagated in time
(e.g., multistep-multistage or peer methods).
 
We provide several numerical experiments in which we illustrate the
behavior of the global error estimation procedures introduced here,
their convergence behavior, and their stability properties.

We investigate nonstiff differential equations. Additional
constraints are necessary in order to preserve error estimation and
avoid order reduction necessary for stiff ODEs.

Global error estimation is typically not used in practice because of
its computational expense. This study targets strategies that would
make it cheaper to estimate the global errors and therefore make them
more practical.  

\appendix

\section{Second-order methods}

\subsection{Other GL second-order methods\label{app:more:second:orders}}
Here we provide two additional second-order methods that we used in
our experiments. A second-order method with $s=3$, $[\GLMB\GLMU,\cancel{\GLMB\GLMA\GLMU}]$, and $\gamma=0$ in
$\GLye$ format is given by
\begin{align}
%Method A2
\label{GLM:E:s2:p2:RK:A2}
\GLMMye=\left[
\begin{array}{ccc|cc}
0     & 0     & 0   & 1  &   4 \\ 
1     & 0     & 0   & 1  &   0 \\
4/9   & 2/9   & 0   & 1  &   0 \\
\hline
0     & -1/2  & 3/2 & 1  &   0\\
1/4   &  1/2  &-3/4 & 0  &   1\\
\end{array}
\right]\,.
\end{align}

Another second-order method with $s=3$ and $[\GLMB\GLMU,\cancel{\GLMB\GLMA\GLMU}]$ that is based on two second-order
approximations ($\gamma=1/2$) in $\GLye$
format is given by
\begin{align}
\label{GLM:E:s2:p2:RK:A4}
\GLMMye=\left[
\begin{array}{ccc|cc}
0     & 0     & 0   & 1  & -11/10 \\ 
1     & 0     & 0   & 1  & 13/30 \\
4/9   & 2/9   & 0   & 1  & 5/3 \\
\hline
5/12  & 5/12  & 1/6 & 1 &  0\\
-1/4  &  -1/4 & 1/2 & 0 &  1\\
\end{array}
\right]\,.
\end{align}

A second order method that results in having both $\GLMB \GLMU$
and $\GLMB\GLMA\GLMU$ diagonal matrices is a four-stage method with the
following coefficients in $\GLyy$ form:
\begin{align}
\label{eq:GLMyy:A9}
\begin{array}{llll}
\GLMA_{2, 1} = 3/4, &  \GLMA_{3, 1} = 1/4,& \GLMA_{3, 2} = 29/60,\\
\GLMA_{4, 1}=-21/44,& \GLMA_{4, 2}= 145/44,& \GLMA_{4, 3} = -20/11,\\ 
\GLMB_{1,1}= 109/275,& \GLMB_{1, 2} = 58/75,& \GLMB_{1,3} = -37/110,& \GLMB_{1,4} =
1/6, \\
\GLMB_{2,1}=3/11,&  & \GLMB_{2,3} = 75/88,& \GLMB_{2,4} = -1/8, \\
&\GLMU_{2, 1} = 75/58,& \\
\GLMU_{1, 2} = 1,& \GLMU_{2, 2}= -17/58,& \GLMU_{3, 2} = 1,&\GLMU_{4, 2} =1,
\end{array}
\end{align}  
where the rest of the coefficients are zero.

%\section{Third order methods}
\section{ RK3(2)G1 \cite{Dormand_1984} in GL form \eqref{eq:GLM:Global}\label{app:Dormand}}
Method \eqref{eq:Solving:Correction:GLM} corresponding to  RK3(2)G1 \eqref{eq:RK3Q:in}
\cite{Dormand_1984} results in the following  tableau in $\GLye$ form:
\begin{align}
\label{eq:RK3Q:in:GLM}
&
\GLMMye=
\left[
 \begin{array}{cccccccc|cc}
%\ST
     0&   0&   0&   0&   0&   0&   0& 0 & 1 & 0\\
%\ST
   1/2&   0&   0&   0&   0&   0&   0& 0 & 1 & 0\\
%\ST
    -1&   2&   0&   0&   0&   0&   0& 0 & 1 & 0\\
%\ST
   1/6& 2/3& 1/6&   0&   0&   0&   0& 0 & 1 & 0\\
%\ST
     0&   0&   0&   0&   0&   0&   0& 0 & 1 & 1\\
%\ST
 -7/24& 1/3&1/12&-1/8& 1/2&   0&   0& 0 & 1 & 1\\
%\ST
   7/6&-4/3&-1/3& 1/2&  -1&   2&   0& 0 & 1 & 1\\
%\ST
     0&    0&  0&   0& 1/6& 2/3& 1/6& 0 & 1 & 1\\
\hline
%\ST
   1/6& 2/3& 1/6& 0& 0& 0& 0& 0 & 1 & 0\\
%\ST
  -1/6& -2/3& -1/6& 0& 1/6&2/3& 1/6& 0 &0 & 1\\
\end{array}\right]\,.
\end{align}

\section{Second-order method with
exact principal error equation\label{sec:ExactPrincipalError:Prince_1978}}
The following method is of type \eqref{eq:ExactPrincipalError} and
introduced in  \cite[(3.11)]{Prince_1978}: 
\begin{align}
\label{eq:Prince:1988:2}
\GLMS:=
\begin{array}{c|ccc}
\ST 0          &         0\\
\ST \frac{1}{2}& \frac{1}{2}   &   0 \\
\ST  \frac{5}{8}         &   0          &   \frac{5}{8}        &0\\
\cline{1-4}
\ST &  -\frac{1}{30} &\frac{1}{2} &\frac{8}{15}
\end{array}
\,,~~
\GLMM:=
\begin{array}{c|ccc}
\ST 0          &         0\\
\ST \frac{1}{2}& \frac{1}{2}   &   0 \\
\ST  \frac{3}{4}         &   \frac{1}{2}         &   \frac{1}{4}        &0\\
\cline{1-4}
\ST &  \frac{2}{3} &-1 &\frac{4}{3}
\end{array}
\,,~~
\GLMF:=
\begin{array}{c|ccc}
\ST 0          &         0\\
\ST \frac{1}{2}& \frac{1}{2}   &   0 \\
\ST  \frac{3}{4}         &   \frac{1}{2}         &   \frac{1}{4}        &0\\
\cline{1-4}
\ST &  -\frac{29}{42} &-\frac{31}{42} &\frac{22}{21}
\end{array}
\,.
\end{align}

%\section*{Acknowledgments}
%We are grateful to Todd Munson and Mihai Anitescu for their helpful
%suggestions in setting up GAMS and BARON. We would also like to thank
%Paul Fischer for providing us with the 2D continuous Galerkin code.

%\clearpage

%\bibliographystyle{amsplain}
%\bibliography{constantinescu,multirate,DA}

\end{document}